\documentclass[a4paper,12pt]{amsart}

\newcommand{\diag}{\text {\rm diag}}
\newcommand{\Lie}{\text {\rm Lie}}
\newcommand{\Ad}{\text {\rm Ad}}
\newcommand{\car}{\mathcal R}

\def\a{\alpha}

\def\c{\chi}

\def\d{\delta}
\def\D{\Delta}

\def\th{\theta}

\def\i{^{-1}}

\def\cc{\mathcal C}

\def\cf{\mathcal F}
\def\cg{\mathcal G}

\def\ci{\mathcal I}

\def\cl{\mathcal L}
\def\cm{\mathcal M}

\def\co{\mathcal O}

\def\car{\mathcal R}

\usepackage{enumerate}
\usepackage{amssymb} 
\usepackage{latexsym} 
\usepackage{amsfonts} 
\usepackage{amsmath} 
\usepackage{eucal} 
\usepackage{bm} 
\usepackage{bbm} 
\usepackage{graphicx} 
\usepackage[english]{varioref} 
\usepackage[nice]{nicefrac} 
\usepackage[all]{xy}


\newcommand{\End}{{\rm End}}
\newcommand{\Hom}{{\rm Hom}}

\newcommand{\Spec}{\textrm{Spec}}

\usepackage{amsthm}


\theoremstyle{plain}
\newtheorem{thm}{Theorem}[section] 
\newtheorem*{thm*}{Theorem} 
 \newtheorem{prop}[thm]{Proposition}
 \newtheorem{lem}[thm]{Lemma}
 \newtheorem{cor}[thm]{Corollary}
 \newtheorem{remark}[thm]{Remark}

\theoremstyle{definition}

\newtheorem{defn}[thm]{Definition}

\theoremstyle{remark}

\newtheorem*{remark*}{Remark}
\newtheorem*{claim*}{Claim}

\begin{document}

\author{Xuhua He}
\address{Department of Mathematics, Stony Brook University, Stony Brook,
NY 11794, USA}
\email{hugo@math.sunysb.edu}
\author{Jesper Funch Thomsen}
\address{Institut for matematiske fag\\ Aarhus Universitet\\ 8000 \AA rhus C,
Denmark} \email{funch@imf.au.dk}

\title[]{Frobenius splitting and geometry of $G$-Schubert varieties}

\begin{abstract}
Let $X$ be an equivariant embedding of a connected reductive group
$G$ over an algebraically closed field $k$ of positive
characteristic. Let $B$ denote a Borel subgroup of $G$. A
$G$-Schubert variety in $X$ is a subvariety of the form $\diag(G)
\cdot V$, where $V$ is a $B \times B$-orbit closure in $X$. In the
case where $X$ is the wonderful compactification of a group of
adjoint type, the $G$-Schubert varieties are the closures of
Lusztig's $G$-stable pieces. We prove that $X$ admits a Frobenius
splitting which is compatible with all $G$-Schubert varieties.
Moreover, when $X$ is smooth, projective and toroidal, then any $G$-Schubert 
variety in $X$ admits a stable Frobenius splitting along an ample 
divisors. Although this
indicates that $G$-Schubert varieties have nice singularities we
present an example of a non-normal $G$-Schubert variety in the 
wonderful compactification of a group of type $G_2$.
Finally we also extend the
Frobenius splitting results to the more general class of
$\mathcal R$-Schubert varieties.
\end{abstract}

\maketitle

\section{Introduction}

Let $G$ denote a connected and reductive group over
an algebraically closed field $k$,
and let $B$ denote a Borel subgroup of $G$.
An equivariant embedding $X$ of $G$ is a $G \times G$-variety
which contains $G=(G \times G)/\diag(G)$ as an open $G \times G$-invariant subset, where $\diag(G)$ is the diagonal image of $G$ in $G \times G$. 
Any equivariant embedding $X$ of $G$ contains finitely many $B
\times B$-orbits. In recent years the geometry of closures of $B
\times B$-orbits has been studied by several authors. The most
general result was obtained in \cite{HT2} where it was proved that
$B \times B$-orbit closures are normal, Cohen-Macaulay and have
($F$-)rational singularities (actually, even stronger results were
obtained). In the present paper we will study (closed) subvarieties 
in $X$ of the form $\rm{diag}(G) \cdot V$, where $V$ denotes the 
closure of a $B \times B$-orbit. Subvarieties of equivariant embeddings
of $G$ of this form will be called $G$-Schubert varieties. 

When $G$ is a semisimple group of adjoint type there exists a
canonical equivariant embedding ${\bf X}$ of $G$ which is 
called the {\it wonderful compactification}. The wonderful 
compactifications are of primary interest in this paper. 
Actually, this work arose from the question of describing 
the closures of the so-called $G$-stable pieces of ${\bf X}$. 
The $G$-stable pieces makes up a decomposition of ${\bf X}$ 
into locally closed subsets. They were introduced by Lusztig 
in \cite{L} where they were  used to construct
and study a class of perverse sheaves which generalizes his theory
of character sheaves on reductive groups. More precisely, these
perverse sheaves are the intermediate extensions of the so-called ``character sheaves'' on a $G$-stable piece. This motivates the study of  
closures of $G$-stable pieces which turns out to coincide 
with the set of $G$-Schubert varieties. 

Before discussing the closures of $G$-stable pieces in details, 
let us make a short digression and discuss some
other motivations for studying $G$-stable pieces 
and $G$-Schubert varieties (in wonderful 
compactifications):

\begin{enumerate}
\item When $G$ is a simple group, the boundary of the 
closure of the unipotent subvariety of $G$ in the 
wonderful compactification ${\bf X}$, is a union of certain 
$G$-Schubert varieties (see \cite{He1} and
\cite{HT1}). Thus knowing the geometry of these 
$G$-Schubert varieties will help us to understand 
the geometry of the closure of the
unipotent variety within ${\bf X}$.

\item Let $\Lie(G)$
denote the Lie algebra of a simple group $G$ over a field
of characteristic zero. Let $\ll , \gg$ denote a fixed
symmetric non-degenerate ad-invariant bilinear form. Let $< , >$ be
the bilinear form on $\Lie (G) \oplus \Lie(G)$ defined by
 $$<(x, y),
(x', y')>=\ll x, x' \gg-\ll y, y' \gg.$$ 
In \cite{EL}, Evens and Lu
showed that each splitting $\Lie(G) \oplus \Lie(G)=l \oplus l'$,
where $l$ and $l'$ are Lagrangian subalgebras of $\Lie(G) \oplus
\Lie(G)$, gives rise to a Poisson structure $\Pi_{l, l'}$ on ${\bf
X}$. If moreover, one starts with the Belavin-Drinfeld splitting,
then all the $G$-stable pieces/$G$-Schubert varieties and $B \times B^-$-orbits of ${\bf
X}$ are Poisson subvarieties, where $B^-$ is a Borel subgroup
opposite to $B$. Thus to understand the Poisson structure on ${\bf
X}$ corresponding to the Belavin-Drinfeld splitting, one needs to
understand the geometry of the $G$-stable pieces/$G$-Schubert varieties.
If we start with another splitting, then we obtain a
different Poisson structure on ${\bf X}$ and in order to understand
these Poisson structures, one needs to study the $\mathcal R$-stable pieces
\cite{LY} instead (see Section \ref{Rstable}), which generalize both the $G$-stable pieces and
the $B \times B^-$-orbits.
\end{enumerate}

The main technical ingredient in this paper is the 
positive characteristic notion
of Frobenius splitting. Frobenius splitting is a powerful
tool which has been proved to be very useful in obtaining 
strong geometric conclusions for e.g. Schubert varieties and 
closures of $B \times B$-orbits in equivariant embeddings.
In the present paper we obtain two types of results related
to $G$-Schubert varieties over fields of positive characteristic. 
First of all, if we fix an equivariant
embedding $X$ of a reductive group $G$ then we prove that 
all $G$-Schubert varieties in $X$ are simultaneously 
compatibly Frobenius split by a Frobenius splitting of $X$. 
Secondly, concentrating on a single $G$-Schubert variety 
$\mathcal X$, in a smooth projective and toroidal embedding $X$, we prove that 
this admits a stable Frobenius splitting along an ample 
divisor. Statements of this form
put strong conditions on the intertwined behavior of 
cohomology groups of line bundles on $X$ and its $G$-Schubert 
varieties. As this is related to geometric properties it
therefore seems natural to expect that $G$-Schubert 
varieties should have nice singularities. 
It therefore comes as a complete surprise that $G$-Schubert 
varieties, in general, are not even normal. We only provide 
a single example of this phenomenon (in the wonderful 
compactification of a group of type $G_2$), but expect 
that this absence of normality is the general picture.

In obtaining the Frobenius splitting result mentioned 
above, we have developed some general theory of how 
to construct Frobenius splitting of varieties of the
form $G \times_P X$ (see Section \ref{Equivalence}
for the definition). This part of the paper is 
influenced by the theory of $B$-canonical Frobenius
splitting as discussed in \cite[Chap.4]{BK}; 
in particular the proof of \cite[Prop.4.1.17]{BK}.
The presentation we provide is more general and 
makes it possible to extract even better result 
from the ideas of $B$-canonical Frobenius 
splittings. This theory is presented in Chapter
\ref{frobsplit} in a generality which is more 
than necessary for obtaining the described Frobenius
splitting results for $G$-Schubert varieties. However,
we hope that this theory could be useful elsewhere 
and we certainly consider it to be of independent 
interest.

This paper is organized in the following way. In Section 2
we introduce notation, and in Section 3 we briefly define 
Frobenius splitting and explain its fundamental ideas. 
Section 4 is devoted to some results on linearized sheaves
which should all be well known. In Section 5 we study the Frobenius
splitting of varieties of the form $G \times_P X$ for a variety $X$
with an action by a parabolic subgroup $P$. The main idea is to
decompose the Frobenius morphism on $G \times_P X$ into maps
associated to the Frobenius morphism on the base $\nicefrac{G}{P}$
and the fiber $X$ of the natural morphism $G \times_P X \rightarrow
\nicefrac{G}{P}$. In Section 6 we relate $B$-canonical Frobenius
splittings to the material in Section 5. Section 7 contains
applications of Section 5 to general $G \times G$-varieties. 
In section 8 we define the $G$-stable pieces and $G$-Schubert varieties.
In Section 9 we apply the material of the previous sections to the
class of equivariant embeddings and obtain Frobenius splitting 
results for $G$-Schubert varieties. Section 10 contains results
related to cohomology of line bundles on $G$-Schubert varieties.
Section 11 contains an example of a non-normal $G$-Schubert variety. 
Finally Section 12 contains generalizations and
variations of the previous sections.

We would like to thank the referee for a careful reading of 
this paper and for numerous suggestions concerning the presentation. 
\section{Notation}
\label{notation}

We will work over a fixed algebraically closed field
$k$. The characteristic of $k$ will depend on the 
application. By a variety we mean a reduced and 
separated scheme of finite type over $k$. 
In particular, we allow a variety to have
several irreducible components. 

\subsection{Group setup}
We let $G$ denote a connected linear algebraic group 
over $k$. We fix a Borel subgroup $B$ and a 
maximal torus $T \subset B$. The notation $P$ is 
used for a parabolic subgroup of $G$ containing 
$B$. The set of $T$-characters is denoted by 
$X^*(T)$ and we identify this set with the set
$X^*(B)$ of $B$-characters. 

\subsection{Reductive case}
In many cases we will specialize to the case 
where $G$ is reductive. In this case we will
also use the following notation : the set of 
roots determined by $T$ is denoted by 
$R \subseteq X^*(T)$ while the
set of positive roots determined by $(B, T)$ is denoted by $R^+$.
The simple roots are denoted by $\alpha_1,\dots,
\alpha_l$, and we let $\D = \{ 1, \dots, l \}$ denote the
associated index set. The simple reflection associated 
to the simple root $\alpha_i$ is then denoted by $s_i$.
The Weyl group $W=\nicefrac{N_G(T)}{T}$ 
is generated by the simple reflections $s_i$, for $i \in \D$. 
The length of $w \in W$ will be
denoted by $l(w)$. For $J \subset \D$, let $W_J$ denote the subgroup
of $W$ generated by the simple reflection associated with the elements 
in $J$, and 
let $W^J$ (resp. ${}^J W$) denote the set of minimal length coset representatives for ${W}/{W_J}$ (resp. $W_J
\backslash W$). The element in $W$ of maximal length will be 
denoted by $w_0$, while $w_0^J$ is used for the same kind of 
element in $W_J$.
For any $w \in W$, we let $\dot w$ denote a representative of
$w$ in $N_G(T)$.
For $J \subset \D$, let $P_J \supset B$ denote the corresponding
standard parabolic subgroup and $P^-_J \supset B^-$ denote its 
opposite parabolic.
Let $L_J=P_J \cap P^-_J$ be the common Levi subgroup of $P_J$ and
$P^-_J$ containing $T$. Let $U_J$ (resp. $U_J^-$) denote the 
unipotent radical of $P_J$ (resp. $P_j^-$). When $J = \emptyset$ we also
use the notation $U$ and $U^-$ for $U_J$ and $U_J^-$ respectively. 
  When $G$ is semisimple and simply connected we may associate 
a fundamental character $\omega_i$ to each simple root $\alpha_i$.
The sum of the fundamental characters is then denoted by $\rho$.
Then $\rho$ also equals half the sum of
the positive roots.

\section{The relative Frobenius morphism}

In this section we collect some results related to the Frobenius 
morphism and to the concept of Frobenius splitting. Compared to 
other presentations on the same subject, this presentation 
differs only in its emphasis on the set 
$ \Hom_{\co_{X'}}\big((F_X)_* \mathcal O_X, \mathcal O_{X'} \big)$
(to be defined below) and not just the set of Frobenius 
splittings. Thus, the obtained results are only 
small variations of already known results as can be 
found in e.g. \cite{BK}.

\subsection{The Frobenius morphism}

By definition a variety $X$ comes with
an associated morphism
$$p_X : X \rightarrow \Spec(k),$$
of schemes. Assume that the field $k$ has positive characteristic
$p>0$.
Then the Frobenius morphism on $\Spec(k)$ is the morphism of
schemes
$$ F_k : \Spec(k) \rightarrow \Spec(k),$$
which on the level of coordinate rings is defined by
$a \mapsto a^p$. As $k$ is assumed to be
algebraically closed the morphism $F_k$ is actually
an isomorphism and we let $F_k^{-1}$ denote the
inverse morphism. Composing $p_X$ with
$F_k^{-1}$ we obtain a new variety
$$p_X' : X \rightarrow \Spec(k),$$
with underlying scheme $X$.
In the following we suppress the morphism $p_X$ from
the notation and simply use $X$ as the notation
for the variety defined by $p_X$. The variety
defined by $p'_X$ is then denoted by $X'$.

The relative Frobenius morphism on $X$ is then the
morphism of varieties :
$$ F_X : X \rightarrow X',$$
which as a morphism of schemes is the identity map
on the level of points and where the associated map
of sheaves
$$F_X^\sharp : \mathcal O_{X'} \rightarrow (F_X)_* \mathcal O_X,$$
is the $p$-th power map. A key property of the
Frobenius morphism is the relation 
\begin{equation}
\label{p-power}
(F_X)^* \cl'
\simeq \cl^p
\end{equation}
which is satisfied for every line bundle $\cl$
on $X$ (here $\cl'$ denotes the corresponding line
bundle on $X'$).

\subsection{Frobenius splitting}
\label{Fsplit}

A variety $X$ is said to be {\it Frobenius split} if the
$\co_{X'}$-linear map of sheaves :
$$F_X^\sharp : \mathcal O_{X'} \rightarrow (F_X)_*
\mathcal O_X,$$
has a section; i.e. if there exists an element
$$ s \in \Hom_{\co_{X'}}\big((F_X)_*
\mathcal O_X, \mathcal O_{X'} \big),$$
such that the composition $s \circ F_X^\sharp$ is
the identity endomorphism of  $\mathcal O_{X'} $.
The section $s$ will be called  {\it a Frobenius splitting
of $X$}. 

\subsection{Compatibility with line bundles and closed subvarieties}

Fix a line bundle $\cl$ on $X$ and a closed 
subvariety $Y$ in $X$ with sheaf of ideals 
$\mathcal I_Y$. Let $Y'$ denote the closed
subvariety of $X'$ associated to $Y$ with 
sheaf of ideals denoted by $\mathcal I_{Y'}$.
The kernel of the natural morphism 
$$ \mathcal Hom_{\co_{X'}}\big((F_X)_* \cl,
\mathcal O_{X'} \big) \rightarrow
\mathcal Hom_{\co_{X'}}\big((F_X)_*( \cl  \otimes
\mathcal  I_Y) ,
\mathcal O_{Y'} \big),$$
induced by the inclusion $ \cl  \otimes
\mathcal  I_Y \subset \cl$ and the 
projection $\co_{X'} \rightarrow \co_{Y'}$,
will be denoted by $\mathcal End_F^\cl(X,Y)$.
The associated space of global sections 
will be denoted by $\End_F^\cl(X,Y)$. When
$Y=X$ we simply denote $\mathcal End_F^\cl(X,Y)$
(resp. $\End_F^\cl(X,Y)$) by $\mathcal End_F^\cl(X)$
(resp. $\End_F^\cl(X)$). The sheaf 
$\mathcal End_F^\cl(X,Y)$ is a subsheaf of 
$\mathcal End_F^\cl(X)$ consisting of the 
elements {\it compatible with $Y$}. Moreover,
there is a natural morphism 
$$\mathcal End_F^\cl(X,Y)_{|Y} \rightarrow 
\mathcal End_F^{\cl_{|Y}}(Y),$$
where the notation $_{|Y}$ means restriction
to $Y$. 

If $Y_1, Y_2, \dots ,Y_m$ is a collection
of closed subvarieties of $X$ then the notation
$\mathcal End_F^\cl(X, Y_1, \dots, Y_m)$
(or sometimes $\mathcal End_F^\cl(X, \{ Y_i \}_{i=1}^m)$)
will denote the intersection of the
subsheaves $\mathcal End_F^\cl(X, Y_i)$ for
$i=1,\dots,m$. The set of global sections of
the sheaf
$\mathcal End_F^\cl(X, Y_1, \dots, Y_m)$
will be denoted by
$\End_F^\cl(X, Y_1, \dots, Y_m)$.

When $\cl = \co_X$ we remove $\cl$ from 
all of the above notation. In particular, the 
vectorspace $\End_F(X)$ denotes the set of
morphisms from $(F_X)_* \co_X$ to $\co_{X'}$
and thus contains the set of Frobenius 
splittings of $X$. A Frobenius splitting 
$s$ of $X$ contained in $\End_F(X,\{ Y_i \}_i)$ 
is said to be compatible with the subvarieties 
$Y_1, \dots, Y_m$. When $s$ is compatible 
in this sense it induces a Frobenius 
splitting of each 
$Y_i$ for $i=1\dots,m$.
In this case we also say that {\it $s$ compatibly Frobenius
splits $Y_1,\dots,Y_m$}. In concrete terms, this
is equivalent to 
$$ s\big((F_X)_* \mathcal I_{Y_i} \big) \subset
\mathcal I_{Y_i'}.$$
for all $i$.

\begin{lem}
\label{compatible}
Let $Y$ and $Z$ denote closed subvarieties 
in $X$ and let $s$ denote a global section 
of $\mathcal End_F^\cl (X,Z,Y)$.
\begin{enumerate}
\item $s \in \End_F^\cl(X,Y_1)$ for every 
irreducible
component $Y_1$ of $Y$.
\item If the scheme theoretic intersection
$Z \cap Y$ is reduced then $s$ is contained
in $\End_F^\cl(X,Y \cap Z)$.
\end{enumerate}
\end{lem}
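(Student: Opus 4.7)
The plan is to work directly from the defining condition $s\bigl((F_X)_*(\cl\otimes \mathcal I_Y)\bigr)\subset \mathcal I_{Y'}$ of compatibility, together with the analogous condition with $Y$ replaced by $Z$ for part (2).

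For part (1), write $Y=Y_1\cup Y_2\cup\cdots\cup Y_n$ for the decomposition into irreducible components and set $U=X\setminus(Y_2\cup\cdots\cup Y_n)$, an open subset of $X$. On $U$ the ideal sheaves $\mathcal I_Y$ and $\mathcal I_{Y_1}$ coincide, so the hypothesis $s\in \End_F^\cl(X,Y)$ immediately gives the inclusion $s\bigl((F_X)_*(\cl\otimes \mathcal I_{Y_1})\bigr)\big|_U\subset \mathcal I_{Y_1'}\big|_U$. To promote this to a global statement I would pick a local section $f$ of $\cl\otimes \mathcal I_{Y_1}$ on an open set $V\subset X$ and show that $s(f)\in\mathcal I_{Y_1'}(V)$. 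Its image in $\co_{Y_1'}(V\cap Y_1')$ vanishes on the open subset $V\cap U\cap Y_1'$, and this subset is non-empty (since $Y_1$ is not contained in $Y_2\cup\cdots\cup Y_n$) and therefore dense in the integral scheme $V\cap Y_1'$. Because on an integral scheme the restriction map from an open set to a non-empty open subset is injective, $s(f)$ restricts to zero on all of $V\cap Y_1'$, which is the required compatibility.

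For part (2), the hypothesis that $Z\cap Y$ is scheme-theoretically reduced is exactly the statement that $\mathcal I_{Y\cap Z}=\mathcal I_Y+\mathcal I_Z$ as subsheaves of $\co_X$, and the analogous identity $\mathcal I_{(Y\cap Z)'}=\mathcal I_{Y'}+\mathcal I_{Z'}$ holds on $X'$. Using $\co_{X'}$-linearity of $s$ together with its compatibilities with $Y$ and $Z$ one then has
\[
s\bigl((F_X)_*(\cl\otimes \mathcal I_{Y\cap Z})\bigr)=s\bigl((F_X)_*(\cl\otimes \mathcal I_Y)\bigr)+s\bigl((F_X)_*(\cl\otimes \mathcal I_Z)\bigr)\subset \mathcal I_{Y'}+\mathcal I_{Z'}=\mathcal I_{(Y\cap Z)'},
\]
which is the required compatibility with $Y\cap Z$.

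The main (and really only) obstacle is the sheaf-level extension argument in part (1): knowing the desired inclusion only on $U$, we need to conclude vanishing on all of $Y_1'$, and this crucially uses that $Y_1$ is a variety, i.e.\ irreducible and reduced, so that $Y_1'$ is integral and sections of $\co_{Y_1'}$ are detected on dense opens. Everything else is formal bookkeeping, relying on the translation of scheme-theoretic union and intersection into intersection and sum of ideal sheaves, together with $\co_{X'}$-linearity of $s$.
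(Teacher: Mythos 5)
Your argument is correct and is essentially identical to the paper's: for part (1), both restrict to the open set where $\mathcal I_Y$ and $\mathcal I_{Y_1}$ agree and then extend by density using that $Y_1$ (hence $Y_1'$) is an integral scheme; for part (2), both invoke $\mathcal I_{Y\cap Z}=\mathcal I_Y+\mathcal I_Z$ together with additivity of $s$.
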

\begin{proof}
Let $Y_1$ denote an irreducible component of
$Y$ and let
$$\mathcal J = s \big( (F_X)_* (\mathcal I_{Y_1} \otimes \cl)
\big) \subset \co_{X'}.$$
Let $U$ denote the open complement (in $X'$)
of the irreducible
components of $Y'$ which are different from $Y_1'$.
Then $\mathcal I_{Y_1'}$ coincides with $\mathcal I_{Y'}$
on $U$ and consequently $\mathcal J_{|U} \subset (
\mathcal I_{Y'})_{|U}$ as $s$ is compatible with $Y$.
In particular, $\mathcal J_{|U} \subset (
\mathcal I_{Y_1'})_{|U}$. We claim that this implies 
that $\mathcal J \subset \mathcal I_{Y_1'}$ : let $V$ 
denote an open subset of $X'$ and let $f$ be a section of  
$\mathcal J$ over $V$. As $\mathcal J$ is a subsheaf of
$\co_{X'}$, we may consider $f$ as a function on $V$, and
it suffices to prove that $f$ vanishes on $Y_1' \cap V$. If 
$Y_1' \cap V$ is empty then this is clear. Otherwise, $U \cap V
\cap Y_1'$ is a dense subset of $Y_1'$ and it suffices to prove 
that $f$ vanishes on this set. But this follows from the
inclusion 
 $\mathcal J_{|U} \subset (\mathcal I_{Y_1'})_{|U}$.
As a consequence $s$
is compatible with $Y_1$.
The second claim follows as the sheaf of ideals
of the intersection $Z \cap Y$ is $\mathcal I_Y
+ \mathcal I_Z$.
\end{proof}

The condition that $Z \cap Y$ is reduced, in 
Lemma \ref{compatible}, only 
ensures that $Z \cap Y$ is a variety. When 
$\cl = \co_X$ and $s$ is a Frobenius splitting 
this is always satisfied 
\cite[Prop.1.2.1]{BK}.
 
\subsection{The evaluation map}

Let $k[X']$ denote the space of global regular 
functions on $X'$. Evaluating an element $s : 
(F_X)_* \co_X \rightarrow \co_{X'}$ of $\End_F(X)$ at
the constant global function $1$ on $X$ defines
an element in $k[X']$ which we denote by ${\rm ev}_X(s)$.
This defines a morphism 
$$ {\rm ev}_X : \End_F(X) \rightarrow k[X'],$$
with the property that ${\rm ev}_X(s) = 1$ 
if and only if  $s$ is a Frobenius splitting 
of $X$.

\subsection{Frobenius $D$-splittings}
\label{dsplit}

Consider an effective Cartier divisor $D$ on $X$,
and let $\sigma_D$ denote the associated 
global section of the associated line bundle 
$\co_X(D)$. A Frobenius splitting $s$ of 
$X$ is said to be a {\it Frobenius $D$-splitting}
if $s$ factorizes as
$$ s : (F_X)_* \co_X \xrightarrow{(F_X)_* \sigma_D}
(F_X)_* \co_X(D) \xrightarrow{s_D} \co_{X'},$$
for some element $s_D$ in $\End_F^{\co_X(D)} 
\big( X \big)$. We furthermore say that the
Frobenius $D$-splitting $s$ is compatible 
with a subvariety $Y$ if $s_D$ is compatible with 
$Y$.  The following result assures that, in
this case, the compatibility with closed subvarieties 
agrees with the usual definition  \cite[Defn.1.2]{Ram}.

\begin{lem}
\label{lemdsplit}
Assume that $s$ defines a Frobenius $D$-splitting
of $X$. Then $s_D$ is compatible with $Y$ if and only
if (i) $s$ compatibly Frobenius splits $Y$ and
(ii) the support of $D$ does not contain any irreducible
components of $Y$.
\end{lem}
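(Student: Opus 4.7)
The plan is to prove the two implications separately, exploiting the factorization $s = s_D \circ (F_X)_* \sigma_D$ and the fact that $\sigma_D$ is a nowhere-vanishing section of $\co_X(D)$ on the complement $U := X \setminus \supp(D)$.

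For the ``only if'' direction, assume $s_D$ is compatible with $Y$. I would first observe that multiplication by $\sigma_D$ sends $\ci_Y$ into $\co_X(D) \otimes \ci_Y$ (viewed as a subsheaf of $\co_X(D)$ by flatness of $\co_X(D)$), and hence
$$s\big((F_X)_* \ci_Y\big) = s_D\big((F_X)_* (\sigma_D \cdot \ci_Y)\big) \subset s_D\big((F_X)_* (\co_X(D) \otimes \ci_Y)\big) \subset \ci_{Y'},$$
yielding (i). For (ii), I would argue by contradiction: suppose some irreducible component $Y_1$ of $Y$ is contained in $\supp(D)$. By Lemma \ref{compatible}(1) applied with $\cl = \co_X(D)$, $s_D$ is compatible with $Y_1$. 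Locally at a point $x \in Y_1$, a local equation $f_D$ of $D$ lies in $\ci_{Y_1,x}$, so $\sigma_D$ is a local section of $\co_X(D) \otimes \ci_{Y_1}$ near $x$. Thus $s_D(\sigma_D) \in \ci_{Y_1'}$ locally near $x$; but $s_D(\sigma_D) = s(1) = 1$, which is absurd since $Y_1$ is a non-empty subvariety.

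For the ``if'' direction, assume (i) and (ii). On $U$, multiplication by $\sigma_D$ is an isomorphism $\co_X|_U \xrightarrow{\sim} \co_X(D)|_U$; consequently, over $U'$, the map $s_D$ is identified with $s$, and the compatibility of $s$ with $Y$ translates into
$$\mathcal J|_{U'} \subset \ci_{Y'}|_{U'}, \qquad \text{where} \quad \mathcal J := s_D\big((F_X)_*(\co_X(D) \otimes \ci_Y)\big) \subset \co_{X'}.$$
By hypothesis (ii), $Y \cap U$ is dense in $Y$, so $Y' \cap U'$ is dense in $Y'$. Exactly as in the density/reducedness argument at the end of the proof of Lemma \ref{compatible}, any local section of $\mathcal J$, viewed as a function, vanishes on a dense subset of its intersection with $Y'$ and hence on all of $Y'$. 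This gives $\mathcal J \subset \ci_{Y'}$, i.e., $s_D$ is compatible with $Y$.

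The principal technical obstacle is the bookkeeping surrounding $\sigma_D$: identifying it simultaneously as a global section, as a map $\co_X \to \co_X(D)$, and locally as a function via trivializations, while keeping track of the tensor-subsheaf structure of $\co_X(D) \otimes \ci_Y$ inside $\co_X(D)$. Once this is set up, both directions reduce to straightforward applications of Lemma \ref{compatible} combined with the density of $U \cap Y$ in $Y$.
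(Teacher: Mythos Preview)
Your proof is correct. The ``only if'' direction parallels the paper's argument closely; the paper is slightly more direct for (ii), observing that $s_D$ induces a map $\overline{s}_D : (F_Y)_* \co_X(D)_{|Y} \to \co_{Y'}$ sending $(\sigma_D)_{|Y}$ to the constant $1$, whence $(\sigma_D)_{|Y}$ cannot vanish identically on any component---but your contradiction via Lemma~\ref{compatible}(1) amounts to the same thing. The main difference is in the ``if'' direction: the paper simply cites \cite[Prop.~1.4]{Ram}, whereas you give a self-contained argument by restricting to $U = X \setminus \supp(D)$ (where $\sigma_D$ trivializes $\co_X(D)$) and then invoking the density argument from the proof of Lemma~\ref{compatible} to propagate the inclusion $\mathcal J|_{U'} \subset \ci_{Y'}|_{U'}$ to all of $X'$. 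Your approach has the advantage of being internal to the paper and making explicit exactly where hypothesis (ii) enters (to guarantee $U' \cap Y'$ is dense in each component of $Y'$); the citation-based approach is of course shorter.
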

\begin{proof}
The {\it if} part of the statement follows from
\cite[Prop.1.4]{Ram}. So assume that $s_D$ is compatible
with $Y$. Then $s_D$ induces a morphism
$$\overline{s}_D : (F_Y)_* \co_X(D)_{
|Y} \rightarrow
\co_{Y'},$$
satisfying $\overline{s}_D((\sigma_D)_{|Y})$ is the
constant function $1$ on $Y'$. As a consequence
$(\sigma_D)_{|Y}$ does not vanish on any of the
irreducible components of $Y$. This proves part
(ii) of the statement. Part (i) is clearly
satisfied.
\end{proof}

It follows that if $s$ is compatible with $Y$
and, moreover, defines a Frobenius $D$-splitting of $X$
then  $D \cap Y$ makes sense as an effective
Cartier divisor on $Y$ and, in this case, $s$ induces a
Frobenius $D \cap Y$-splitting of $Y$.

\subsection{Stable  Frobenius splittings along divisors}

Let $X^{(0)}= X$ and define recursively $X^{(n)} = (X^{(n-1)})'$
for $n \geq 1$. Composing the Frobenius morphisms on $X^{(i)}$
for $i=0,\dots, n$, we obtain a morphism
$$F_X^{(n)} : X \rightarrow X^{(n)},$$
with an associated map of sheaves
$$( F_X^{(n)})^\sharp : \co_{X^{(n)}} \rightarrow 
(F_X^{(n)})_* \co_X.$$
Let, as in Section \ref{dsplit}, $D$ denote an effective
Cartier divisor on $X$ with associated canonical section 
$\sigma_D$ of $\co_X(D)$. We say that  {\it $X$ admits 
a stable Frobenius splitting along $D$} if  
there exists a positive integer $n$ and an element 
 $$ s \in \Hom_{\co_{X^{(n)}}}\big(( F_X^{(n)})_*
\mathcal \co_X(D), \mathcal \co_{X^{(n)}} \big),$$
such that the composed map 
$$ \co_{X^{(n)}} \xrightarrow{( F_X^{(n)})^\sharp}  (F_X^{(n)})_* \co_X \xrightarrow{ (F_X^{(n)})_* \sigma_D} 
 (F_X^{(n)})_*  \co_X(D) \xrightarrow{s}  \co_{X^{(n)}} ,$$
is the identity map on $\co_{X^{(n)}}$. The element 
$s$ is called a {\it stable Frobenius splitting of $X$
along $D$}. When $Y$ is a closed subvariety of $X$ we say that
the stable Frobenius splitting {\it  $s$ is compatible with 
$Y$}  if
$$s \big( (F_X^{(n)})_* (\mathcal I_Y \otimes
\co_X(D)) \big) \subset \mathcal I_{Y^{(n)}}.$$
Notice that this condition necessarily implies that 
the support of $D$ does not contain any of the 
irreducible components of $Y$ (cf. proof of Lemma
\ref{lemdsplit}). 
Notice also that if $X$ admits a Frobenius $D$-splitting
which is compatible with $Y$ then $X$ admits a 
stable Frobenius splitting along $D$ which is 
compatible with $Y$.
The following is well known (see e.g. \cite[Lem.4.4]{T})

\begin{lem}
\label{sum}
Let $D_1$ and $D_2$ denote effective Cartier 
divisors on $X$ and let $Y$ denote a closed
subvariety of $X$. Then $X$ admits stable Frobenius
splittings along $D_1$ and $D_2$ which are 
compatible with $Y$ if and only if $X$ admits a 
stable Frobenius splitting along $D_1+D_2$ which 
is compatible with $Y$.
\end{lem}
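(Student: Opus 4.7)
My plan is to treat the two implications separately: the \emph{if} direction is essentially formal, while the \emph{only if} direction requires a compositional construction of splittings that carefully tracks the Frobenius twists of the line bundles $\co_X(D_i)$.

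For the \emph{if} direction, suppose $s:(F_X^{(n)})_*\co_X(D_1+D_2)\to\co_{X^{(n)}}$ is a stable Frobenius splitting along $D_1+D_2$ compatible with $Y$. Using the factorization $\sigma_{D_1+D_2}=\sigma_{D_1}\sigma_{D_2}$, precomposition with multiplication by $\sigma_{D_2}$ yields a morphism $s_1:(F_X^{(n)})_*\co_X(D_1)\to\co_{X^{(n)}}$ which is immediately checked to be a stable splitting along $D_1$; symmetrically for $s_2$. Since multiplication by $\sigma_{D_2}$ carries $\mathcal I_Y\otimes\co_X(D_1)$ into $\mathcal I_Y\otimes\co_X(D_1+D_2)$, compatibility with $Y$ is inherited.

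For the \emph{only if} direction, assume stable splittings $s_i:(F_X^{(n_i)})_*\co_X(D_i)\to\co_{X^{(n_i)}}$ compatible with $Y$ are given. Tensor $s_2$ over $\co_{X^{(n_2)}}$ with the line bundle $\co_{X^{(n_2)}}(D_1')$, where $D_1'$ denotes $D_1$ regarded as a divisor on $X^{(n_2)}$; applying the projection formula together with the identity $(F_X^{(n_2)})^*\co_{X^{(n_2)}}(D_1')\simeq\co_X(p^{n_2}D_1)$ from \eqref{p-power} rewrites the source and produces
$$\tilde{s}_2:(F_X^{(n_2)})_*\co_X(p^{n_2}D_1+D_2)\to\co_{X^{(n_2)}}(D_1').$$
Pushing $\tilde{s}_2$ forward by $(F_{X^{(n_2)}}^{(n_1)})_*$ and postcomposing with the Frobenius-twisted incarnation of $s_1$, namely $(F_{X^{(n_2)}}^{(n_1)})_*\co_{X^{(n_2)}}(D_1')\to\co_{X^{(n_1+n_2)}}$, yields a morphism
$$t:(F_X^{(n_1+n_2)})_*\co_X(p^{n_2}D_1+D_2)\to\co_{X^{(n_1+n_2)}}.$$
A direct local computation on the canonical section $\sigma_{p^{n_2}D_1+D_2}=\sigma_{D_2}\cdot\sigma_{D_1}^{p^{n_2}}$, using the two splitting identities in turn, exhibits $t$ as a stable splitting along $p^{n_2}D_1+D_2$ at level $n_1+n_2$. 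Since $(p^{n_2}-1)D_1$ is effective, precomposing $t$ with $(F_X^{(n_1+n_2)})_*\sigma_{(p^{n_2}-1)D_1}$ produces the desired stable splitting along $D_1+D_2$.

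Compatibility with $Y$ propagates through each step of this construction: tensoring with a line bundle is exact, the pushforward under Frobenius preserves the ideal-sheaf condition, composition of compatible maps is compatible, and multiplication by $\sigma_{(p^{n_2}-1)D_1}$ carries $\mathcal I_Y\otimes\co_X(D_1+D_2)$ into $\mathcal I_Y\otimes\co_X(p^{n_2}D_1+D_2)$. The main obstacle is precisely the asymmetric factor $p^{n_2}$ that appears: a $p^{n_2}$-linear map twisted by a line bundle $\cl$ produces a twist by $\cl^{p^{n_2}}$, so one cannot combine the two splittings directly on $D_1+D_2$ but must first enlarge to $p^{n_2}D_1+D_2$ and then descend using effectivity of $(p^{n_2}-1)D_1$.
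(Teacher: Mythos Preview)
Your argument is correct. The paper does not actually supply a proof of this lemma but only refers the reader to \cite[Lem.~4.4]{T}; the construction you give---precomposing with $\sigma_{D_2}$ for the easy direction, and for the harder direction tensoring $s_2$ by $\co_{X^{(n_2)}}(D_1')$, composing with the Frobenius twist of $s_1$, and then descending from $p^{n_2}D_1+D_2$ to $D_1+D_2$ via the effective section $\sigma_{(p^{n_2}-1)D_1}$---is precisely the standard argument that reference contains.
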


The following result explains one of the
main applications of (stable) Frobenius
splitting. Remember that a line bundle 
$\cl$ is nef if $\cl
\otimes \cm$ is ample whenever $\cm$ is ample.

\begin{prop}
\label{prop stable}
Assume that $X$ admits a stable Frobenius splitting
along an effective Cartier divisor $D$. Then
there exists a positive integer $n$ such that for each line bundle
$\cl$ on $X$ we have an inclusion of abelian
groups
$$ {\rm H}^i(X , \cl) \subset  {\rm H}^i(X, \cl^{p^n} \otimes \co_X(D)).$$
In particular, if $D$ is ample and $\cl$ is
nef, then
$ {\rm H}^i(X , \cl)=0$ for $i>0$.
Moreover, if the stable Frobenius splitting of $X$ is 
compatible with a subvariety $Y$, $D$ is ample and $\cl$ is 
nef then the restriction morphism
$$ {\rm H}^0(X , \cl) \rightarrow  {\rm H}^0(Y, \cl),$$
is surjective.
\end{prop}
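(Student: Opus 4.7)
The plan is to extract each of the three assertions from the identity factorisation defining a stable Frobenius splitting, by twisting with $\cl$, applying the projection formula, and then iterating.

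First I would establish the inclusion in the first claim. Tensoring the defining composition
$$\co_{X^{(n)}} \xrightarrow{(F_X^{(n)})^\sharp} (F_X^{(n)})_*\co_X \xrightarrow{(F_X^{(n)})_*\sigma_D} (F_X^{(n)})_*\co_X(D) \xrightarrow{s} \co_{X^{(n)}}$$
with $\cl$ on $X^{(n)}$ and applying the projection formula, together with the isomorphism $(F_X^{(n)})^*\cl \simeq \cl^{p^n}$ from \eqref{p-power}, produces a composition
$$\cl \longrightarrow (F_X^{(n)})_*\cl^{p^n} \longrightarrow (F_X^{(n)})_*\bigl(\cl^{p^n}\otimes \co_X(D)\bigr) \longrightarrow \cl$$
that is again the identity. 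Since $F_X^{(n)}$ is a finite, hence affine, morphism, taking $H^i$ identifies the middle term with $H^i(X,\cl^{p^n}\otimes \co_X(D))$, and the identity factorisation yields the desired inclusion of $H^i(X,\cl)$ into that group.

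For the higher cohomology vanishing when $D$ is ample and $\cl$ is nef I would iterate. Self-composing the stable splitting a total of $k$ times (or, equivalently, appealing inductively to Lemma \ref{sum}) produces, for every $k \ge 1$, a stable Frobenius splitting along the divisor $m_k D$ with $m_k = 1 + p^n + \cdots + p^{(k-1)n}$ and with associated Frobenius exponent $p^{kn}$. Feeding this into the inclusion already proved gives
$$H^i(X,\cl)\subset H^i\bigl(X,\cl^{p^{kn}}\otimes \co_X(m_k D)\bigr).$$
Setting $\ca := \cl^{p^n - 1}\otimes \co_X(D)$, which is ample because a nef bundle tensored with an ample bundle is ample, and using the identity $p^{kn} = 1 + (p^n-1)m_k$, the right-hand line bundle becomes $\cl \otimes \ca^{m_k}$. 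Serre vanishing applied to the coherent sheaf $\cl$ against the ample line bundle $\ca$ then kills the right-hand cohomology for $k$ large, and hence $H^i(X,\cl)=0$.

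Finally, for the surjectivity statement I would run the same machine while exploiting compatibility with $Y$. The compatibility $s\bigl((F_X^{(n)})_*(\ci_Y\otimes \co_X(D))\bigr)\subset \ci_{Y^{(n)}}$ is preserved under self-composition, so after twisting with $\cl$ and passing to $H^0$ one obtains a commutative diagram
$$
\begin{array}{ccc}
H^0\bigl(X,\cl^{p^{kn}}\otimes \co_X(m_k D)\bigr) & \longrightarrow & H^0\bigl(Y,(\cl^{p^{kn}}\otimes \co_X(m_k D))|_Y\bigr) \\
\Big\downarrow & & \Big\downarrow \\
H^0(X,\cl) & \longrightarrow & H^0(Y,\cl|_Y)
\end{array}
$$
in which each vertical column fits into an identity factorisation of $H^0(X,\cl)$ (resp.\ $H^0(Y,\cl|_Y)$) with itself, so every section on $Y$ lifts through the right-hand column. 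Serre vanishing, now applied to the coherent sheaf $\ci_Y \otimes \cl = \ci_Y\otimes \cl\otimes \ca^{0}$ tensored with $\ca^{m_k}$, makes the top horizontal arrow surjective for $k$ large. A diagram chase then lifts any given section on $Y$ to a section on $X$.

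The main obstacle is the bookkeeping of the iteration: namely, justifying that $k$-fold self-composition of $s$ genuinely yields a stable Frobenius splitting along $m_k D$ with exponent $p^{kn}$ and, in the compatible case, preserves compatibility with $Y$. Once this is in place, the remainder of the argument is a purely formal combination of the projection formula, affineness of Frobenius, and Serre vanishing.
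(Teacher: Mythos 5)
Your proof is correct, and it is essentially the argument the paper alludes to by citing \cite[Prop.~1.13(i)]{Ram}: twist the identity factorisation from the definition by a line bundle, use the projection formula and affineness of the Frobenius to get the inclusion, and then iterate to gain access to Serre vanishing. The key arithmetic you supply — writing $\cl^{p^{kn}}\otimes\co_X(m_kD)$ as $\cl\otimes\ca^{m_k}$ with $\ca=\cl^{p^n-1}\otimes\co_X(D)$ a \emph{fixed} ample bundle — is exactly the point that makes Serre vanishing applicable, and is the crux of the usual proof. Two minor remarks. First, "self-composing the stable splitting $k$ times'' and "appealing inductively to Lemma~\ref{sum}'' are not quite the same operation (the lemma gives a stable splitting along $2D,3D,\dots$ rather than along $m_kD$); however, note that you do not actually need either: applying the already-proved first assertion repeatedly — once for $\cl$, then again with $\cl$ replaced by $\cl^{p^n}\otimes\co_X(D)$, and so on — already yields $H^i(X,\cl)\subset H^i(X,\cl^{p^{kn}}\otimes\co_X(m_kD))$ directly, which avoids the bookkeeping you flag as the main obstacle. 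Second, the compatibility bookkeeping for the surjectivity statement does need the observation (made in the paper right after the definition) that compatibility forces $D$ to contain no component of $Y$, so that $s$ genuinely restricts to a stable splitting of $Y$ along $D\cap Y$; once that is noted, your diagram chase goes through.
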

\begin{proof}
Argue as in the proof \cite[Prop.1.13(i)]{Ram}.
\end{proof}

\subsection{Duality for $F_X$}
\label{duality}
By duality (see \cite[Ex.III.6.10]{Har2})
for the finite morphism $F_X$ we may
to each quasi-coherent $\co_{X'}$-module $\cf$
associate an $\co_X$-module denoted by $(F_X)^! \cf$
and satisfying
$$(F_X)_* (F_X)^! \cf = \mathcal Hom_{\co_{X'}}
\big( (F_X)_* \co_X , \cf \big).$$
Actually, as $F_X$ is the identity on the level
of points we may define $(F_X)^! \cf$ as the
sheaf of abelian groups
$$\mathcal Hom_{\co_{X'}}
\big( (F_X)_* \co_{X} , \cf \big),$$
with $\co_X$-module structure defined by
$$ (g \cdot \phi)(f) = \phi(gf),$$
for $g ,f \in \co_X$ and
$\phi \in \mathcal Hom_{\co_{X'}}
\big( (F_X)_* \co_{X} , \cf \big) $. When
$\cf = \co_X$ we will also  use the notation
$\mathcal End^!_F(X)$ for $(F_X)^! \co_X$.
This sheaf is particularly nice when $X$ is 
smooth as $(F_X)^! \co_X$ then coincides with the 
line bundle $\omega_X^{1-p}$, where 
$\omega_X$ denotes the dualizing sheaf of 
$X$ (see e.g. \cite[Sect.1.3]{BK}).
If $Y_1, Y_2, \dots ,Y_m$ is a collection
of closed subvarieties of $X$ then
$\mathcal End^!_F(X, Y_1, \dots, Y_m)$
(or $\mathcal End^!_F(X, \{ Y_i \}_{i=1}^m)$)
will denote the subsheaf of
$\mathcal End^!_F(X)$ consisting of 
the elements mapping 
the sheaf of ideals
$\mathcal I_{Y_i} $
to $\mathcal I_{Y_i'}$
for all $i=1, \dots,m$. We say that 
$\mathcal End^!_F(X, \{ Y_i \}_{i=1}^m)$
is the subsheaf of elements compatible
with $Y_1,\dots,Y_m$.

More generally, duality for $F_X$ implies 
that we have a natural identification 
$$ (F_X)_*  \mathcal Hom_{\co_{X}} \big(\mathcal G  , 
(F_X)^!\mathcal F \big)
\simeq  
\mathcal   Hom_{\co_{X'}}
\big( (F_X)_* \mathcal G  , \mathcal F \big),$$
whenever $\mathcal G$ (resp. $\mathcal F$) is a 
quasicoherent sheaf on $X$ (resp. $X'$). This 
leads to the identification 
$$ \Hom_{\co_{X}} \big(\mathcal G  , 
(F_X)^!\mathcal F \big)
\simeq  
\Hom_{\co_{X'}}
\big( (F_X)_* \mathcal G  , \mathcal F \big),$$
where a morphism 
$\eta : \mathcal G  \rightarrow 
(F_X)^!\mathcal F $ is identified with the
composed morphism
$$\eta' : (F_X)_* \mathcal G \xrightarrow{(F_X)_* \eta} 
(F_X)_* (F_X)^!\mathcal F \simeq 
\mathcal Hom_{\co_{X'}}
\big( (F_X)_* \co_X , \cf \big) \rightarrow
\mathcal F.$$
Here the latter map is the natural evaluation 
map at the element $1$ in $\co_{X}$. From now
on we will specialize to the case where $\mathcal 
F = \co_{X'}$ and $\mathcal G$ equals a line bundle
$\cl$ on $X$. In this case, an element in 
$\Hom_{\co_{X}} \big( \cl  
, \mathcal End^!_F(X)\big)$
may also be considered as a global section of
the sheaf  $\mathcal End^!_F(X) \otimes \cl^{-1}$.
For later
use we emphasize 

\begin{lem}
\label{duality1}
Let $\eta$ be an element in $\Hom_{\co_{X}} \big( \cl  
, \mathcal End^!_F(X)\big)$
and let $\eta'$ denote the corresponding
element in 
$ \Hom_{\co_{X'}} \big( (F_X)_* \cl  
, \co_{X'} \big)$ by the above identification.
Then $\eta'$ factors through the morphism  
$$ (F_X)_* \mathcal \cl \xrightarrow{(F_X)_*  \eta }
 (F_X)_*\mathcal End^!_F(X).  $$
Moreover, the element $\eta'$ is compatible with 
a collection of closed subvarieties $Y_1,\dots,Y_m$ of $X$ if and 
only if the image of $\eta$ is contained in 
$ \mathcal End^!_F(X,Y_1,\dots,Y_m)$. 
\end{lem}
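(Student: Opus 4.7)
The proof will be a direct unpacking of the duality identification $\Hom_{\co_{X}}(\cl, \mathcal End^!_F(X)) \simeq \Hom_{\co_{X'}}((F_X)_* \cl, \co_{X'})$ recalled immediately before the statement. The first claim is essentially a tautology: by the very way $\eta'$ is constructed, it is already exhibited as the composition
$$\eta' : (F_X)_* \cl \xrightarrow{(F_X)_* \eta} (F_X)_* \mathcal End^!_F(X) \simeq \mathcal Hom_{\co_{X'}}\bigl((F_X)_* \co_X, \co_{X'}\bigr) \xrightarrow{{\rm ev}_1} \co_{X'},$$
so $\eta'$ trivially factors through $(F_X)_* \eta$.

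For the second claim, my plan is to work locally. Since $\cl$ is a line bundle, a typical section of $\cl \otimes \mathcal I_{Y_i}$, viewed as the subsheaf $\mathcal I_{Y_i} \cdot \cl \subset \cl$, can be written as $a \cdot s$ with $a$ a local section of $\mathcal I_{Y_i}$ and $s$ a local section of $\cl$. Combining the $\co_X$-linearity of $\eta$ with the $\co_X$-module structure $(g \cdot \phi)(f) = \phi(gf)$ on $\mathcal End^!_F(X)$ described in Section \ref{duality}, the key local identity becomes
$$\eta'(a \cdot s) = \bigl(a \cdot \eta(s)\bigr)(1) = \eta(s)(a),$$
where the final expression is the evaluation of $\eta(s) : (F_X)_* \co_X \to \co_{X'}$ at the section $a$.

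Once this identity is in hand, the equivalence of the two compatibility conditions drops out at once. If the image of $\eta$ lies in $\mathcal End^!_F(X, Y_1, \dots, Y_m)$, then for each $i$ and each $a \in \mathcal I_{Y_i}$ the computation gives $\eta'(a \cdot s) = \eta(s)(a) \in \mathcal I_{Y_i'}$, which is precisely compatibility of $\eta'$ with $Y_i$. Conversely, compatibility of $\eta'$ forces $\eta(s)(a) = \eta'(a \cdot s) \in \mathcal I_{Y_i'}$ for every local section $s$ of $\cl$ and every $a \in \mathcal I_{Y_i}$, so $\eta(s)$ sends $\mathcal I_{Y_i}$ into $\mathcal I_{Y_i'}$ and $\eta$ factors through $\mathcal End^!_F(X, Y_1, \dots, Y_m)$.

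There is no real obstacle here: the argument is just bookkeeping with the duality isomorphism. The only point requiring a bit of care is verifying that the non-standard $\co_X$-module structure on $\mathcal End^!_F(X)$ (where $g$ acts by premultiplication inside the argument) interacts with evaluation at $1$ exactly as in the displayed identity; once that is confirmed both assertions of the lemma follow simultaneously.
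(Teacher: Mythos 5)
Your proof is correct and matches the paper's argument almost word for word: both treat the first assertion as a tautology of the duality identification, and both establish the second by reducing to the local identity $\eta'(a\cdot s) = (a\cdot\eta(s))(1) = \eta(s)(a)$, which follows from the $\co_X$-linearity of $\eta$ together with the twisted module structure $(g\cdot\phi)(f)=\phi(gf)$ on $(F_X)^!\co_X$. The only cosmetic difference is notation: the paper writes $\sigma$ for the section of $\cl$ and $f$ for the function in $\mathcal I_Y$, where you write $s$ and $a$.
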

\begin{proof}
The first part of the statement follows directly
from the discussion above. To prove the second
statement we may assume that $m=1$. We use
the notation $Y = Y_1$. Let $\sigma$ denote 
a section of $\cl$ over an open subset $U$
of $X$, and consider $s=\eta(\sigma)$ as a
map 
$$ s :  \co_{X}(U) \rightarrow \co_{X'}(U').$$ 
That $s$ is compatible with $Y$ means that 
$s(f)$ vanishes on $Y'$ whenever $f$ vanishes on 
$Y$ for a function $f$ on $U$. Alternatively,
the evaluation of $f \cdot s$ at $1$, 
which coincides with $\eta'(f 
\cdot \sigma)$, should vanish
on $Y'$. In particular, the image of $\eta$
is contained in $ \mathcal End^!_F(X,Y)$ if
and only if the restriction of $\eta'$ to $(F_X)_* 
\big( \mathcal I_Y \otimes \cl \big)$  
maps into $\mathcal I_{Y'}$. This ends the 
proof.
\end{proof}

We will also need the following remark
\begin{lem}
\label{duality2}
Let $D$ denote a reduced effective Cartier divisor on
$X$ and $\cl$ denote a line bundle on $X$. Let
$\cm = \co_X((p-1)D) \otimes \cl$ and assume that
we have an $\co_X$-linear  morphism
$\eta : \mathcal M \rightarrow \mathcal End^!_F(X)$. Let $\sigma_D$ denote the canonical
section of $\co_X(D)$ and consider the map
$$ \eta_D : \cl \rightarrow \mathcal End^!_F(X),$$
induced by $\sigma_D^{p-1}$. Then the element
$$\eta'_D \in \mathcal Hom_{\co_{X'}}
\big( (F_X)_* \cl , \co_{X'} \big),$$
induced by $\eta_D$, is compatible with
the support of $D$. In
particular, the image of $\eta_D$ is
contained in $\mathcal End^!_F(X,D)$.
\end{lem}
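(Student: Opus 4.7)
By Lemma \ref{duality1}, showing that $\eta'_D$ is compatible with the support of $D$ is the same as showing that the image of $\eta_D$ lies in $\mathcal End^!_F(X,D)$. So the plan is to verify the latter condition pointwise. The argument is purely local, and the payoff will come from the fact that $\sigma_D$ is raised to the power $p-1$, which when multiplied by a local equation for $D$ produces a $p$-th power, i.e.\ an element pulled back from $X'$.

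First I would work in an affine open $U$ on which both $\cl$ and $\co_X(D)$ are trivial and $D$ is cut out by a single regular function $f\in\co_X(U)$. Trivializing, $\sigma_D$ is identified with $f$ and a local section of $\cl$ is just a function $\sigma\in\co_X(U)$. By the $\co_X$-linearity of $\eta$, the element $\eta_D(\sigma)$ equals $\eta(f^{p-1}\sigma) = f^{p-1}\cdot\eta(\sigma)$ in $\mathcal End^!_F(X)(U)$, where $\cdot$ refers to the $\co_X$-module structure on $\mathcal End^!_F(X)$ recalled in Section \ref{duality}: namely $(g\cdot\phi)(h)=\phi(gh)$.

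Now to test compatibility with $D$, take a local section $a$ of the ideal sheaf $\mathcal I_D$ on $U$, so that $a=fh$ for some $h\in\co_X(U)$. Then
$$\eta_D(\sigma)(a) \;=\; (f^{p-1}\cdot\eta(\sigma))(fh) \;=\; \eta(\sigma)(f^{p}h).$$
The key observation is that $f^{p}$ is the image under $F_X^\sharp:\co_{X'}\to(F_X)_*\co_X$ of the section $\tilde f\in\co_{X'}(U')$ corresponding to $f$. Since $\eta(\sigma)$ is a morphism of $\co_{X'}$-modules, one can pull $\tilde f$ through:
$$\eta(\sigma)(f^{p}h)\;=\;\eta(\sigma)\bigl(F_X^\sharp(\tilde f)\cdot h\bigr)\;=\;\tilde f\cdot\eta(\sigma)(h).$$
This is a section of $\co_{X'}$ divisible by $\tilde f$, hence it lies in $\mathcal I_{D'}$. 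Thus $\eta_D(\mathcal I_D)\subset\mathcal I_{D'}$ locally, so the image of $\eta_D$ is contained in $\mathcal End^!_F(X,D)$, as required.

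The only real conceptual step is the third one: the transition from the $\co_X$-linearity of $\eta$ to the $\co_{X'}$-linearity used on $\eta(\sigma)$, together with the recognition that $f^{p-1}\cdot f=f^p$ is exactly a $p$-th power and therefore Frobenius-pulled-back. Everything else is bookkeeping about the two module structures on $(F_X)^!\co_X$. The reducedness of $D$ is not actually used in the argument sketched above, but it ensures that $D$ defines a genuine subvariety and so $\mathcal End^!_F(X,D)$ is defined in the sense of Section \ref{duality}.
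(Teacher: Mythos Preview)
Your argument is correct and is essentially the local unwinding of the paper's own proof. The paper phrases the same observation sheaf-theoretically: it factors $\eta_D'$ as $(F_X)_*\sigma_D^{p-1}$ followed by $\eta'$, notes that on $\cl\otimes\co_X(-D)$ this becomes multiplication by $\sigma_D^{p}$, and then uses the projection formula $(F_X)_*(\co_X(-pD)\otimes\cm)\simeq\co_{X'}(-D')\otimes(F_X)_*\cm$ (equation~(\ref{p-power})) together with $\co_{X'}$-linearity of $\eta'$ to land in $\co_{X'}(-D')=\mathcal I_{D'}$; your computation $\eta_D(\sigma)(fh)=\eta(\sigma)(f^ph)=\tilde f\cdot\eta(\sigma)(h)$ is exactly this statement written out in local coordinates.
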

\begin{proof}
Notice that $\eta_D'$ is the composition
$$ \eta_D' :(F_X)_* {\cl} \xrightarrow{(F_X)_* \sigma_D^{p-1}}
(F_X)_* {\cm} \xrightarrow{\eta'} \co_{X'},$$
where $\eta'$ is the element corresponding to $\eta$.
Hence, the restriction of $\eta_D'$ to $\cl \otimes
\co_X(-D)$ coincides with the map
$$ (F_X)_* \big({\cl \otimes \co_X(-D)}\big) \xrightarrow{(F_X)_* \sigma_D^{p}}
(F_X)_* {\cm} \xrightarrow{\eta'} \co_{X'}.$$
But the restriction of $\eta'$ to (cf. (\ref{p-power}))
$$(F_X)_*\big( \co_X(-p D) \otimes \cm \big)
\simeq  \co_{X'}(-D') \otimes   (F_X)_* \cm,$$
maps by linearity into $ \co_{X'}(-D')$. The 
{\it in particular} part follows by Lemma
\ref{duality1}.
\end{proof}

\subsection{Push-forward operation}
\label{push-forward}
Assume that $f : X \rightarrow Z$ is a morphism
of varieties satisfying that the associated map
$f^\sharp : \co_Z \rightarrow f_* \co_X$ is an
isomorphism. Let $f' : X' \rightarrow Z'$
denote the associated morphism. Then $f'_*$
induces a morphism
$$ f'_* \mathcal End_F(X) \rightarrow \mathcal
End_F(Z).$$
If $Y \subset X$ is a closed subset 
then the subsheaf  $f'_* \mathcal End_F(X,Y)$ is mapped to 
$\mathcal  End_F(Z,\overline{f(Y)} )$, where
$\overline{f(Y)}$ denotes the variety associated
to the closure of the image of $Y$. On the
level of global sections this means that every
Frobenius splitting $s$ of $X$ induces a
Frobenius splitting $f'_* s$ of $Z$ such that when
$s$ is compatible with $Y$ then $f'_* s$
is compatible with $\overline{f(Y)}$.
Likewise

\begin{lem}
\label{lem push forward}
With notation as above, let $\cl$ denote a line
bundle on $Z$ and let $s$ be an element of
$\End_F^{f^* (\cl)} \big(X  \big)$.
Then $f'_* s$ is an element of
$\End_F^{\cl} \big(Z  \big)$.
Moreover, if $s$ is compatible with a closed
subvariety $Y$ of $X$ then $f'_* s$ is compatible
with $\overline{f(Y)}$.
\end{lem}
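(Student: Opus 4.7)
The plan is to unravel $f'_* s$ via the base-change square for Frobenius and the projection formula, then verify compatibility at the level of local sections.

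First I would use the commutative square
\[
\xymatrix{
X \ar[r]^{F_X} \ar[d]_{f} & X' \ar[d]^{f'} \\
Z \ar[r]^{F_Z} & Z'
}
\]
whose vertical arrows are equal as maps of underlying topological spaces, to rewrite $f'_*(F_X)_* = (F_Z)_* f_*$. Combining this with the projection formula for the locally free sheaf $\cl$ and the hypothesis $\co_Z \xrightarrow{\sim} f_*\co_X$ yields
\[
f'_*(F_X)_* f^*\cl \;=\; (F_Z)_* f_* f^*\cl \;\cong\; (F_Z)_*\bigl(\cl \otimes f_*\co_X\bigr) \;\cong\; (F_Z)_*\cl,
\]
while base-changing $f^\sharp$ produces $\co_{Z'} \xrightarrow{\sim} f'_*\co_{X'}$. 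Composing $f'_* s$ with these two identifications therefore produces a morphism $(F_Z)_*\cl \to \co_{Z'}$, i.e.\ an element of $\End_F^\cl(Z)$. At the level of local sections, $\tau \in \cl(V)$ on $V \subseteq Z$ is sent to the unique $g \in \co_{Z'}(V')$ satisfying $f^* g = s(f^*\tau)$ inside $\co_{X'}(f^{-1}V)$.

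For the compatibility assertion I would set $W := \overline{f(Y)}$ and note that $f(Y) \subseteq W$ forces $Y \subseteq f^{-1}(W)$, whence $f^* \mathcal I_W \subseteq \mathcal I_Y$. For a local section $\tau$ of $\cl \otimes \mathcal I_W$, the pullback $f^*\tau$ therefore lies in $f^*\cl \otimes \mathcal I_Y$, and compatibility of $s$ with $Y$ gives $s(f^*\tau) \in \mathcal I_{Y'}$. The corresponding function $g$ on $V'$ then satisfies $f^* g \in \mathcal I_{Y'}$, so $g$ vanishes on $f(Y) \cap V$ and, being regular, on its closure $W \cap V$; that is, $g \in \mathcal I_{W'}(V')$, as required.

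The only step demanding a little care is the opening identification, where one must line up the base-change square, the projection formula, and $f^\sharp$ in the right order so that $f'_* s$ is described on local sections by $\tau \mapsto (f'^\sharp)^{-1}\!\bigl(s(f^*\tau)\bigr)$. Once this is in place the compatibility check is routine, and I do not anticipate any substantial obstacle.
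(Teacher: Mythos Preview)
Your argument is correct and follows essentially the same route as the paper. The paper's proof is a single sentence invoking the identity $\mathcal I_{\overline{f(Y)}} = f_* \mathcal I_Y$ from \cite[Lem.~1.1.8]{BK}; your compatibility check amounts to proving the inclusion $f'_* \mathcal I_{Y'} \subseteq \mathcal I_{W'}$ by hand via local sections, which is exactly the half of that identity that is needed here.
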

\begin{proof}
This follows easily from the fact that the
sheaf of ideals of $\overline{f(Y)}$ coincides
with $f_* \mathcal I_Y$ \cite[Lem.1.1.8]{BK}.
\end{proof}

\section{Linearized sheaves}
\label{lin-sheaf}
In this section we collect a number of well known facts about 
linearized sheaves. The chosen presentation follows rather 
closely the presentation in \cite[Sect.2]{Bri}. 

Let $H$ denote a linear algebraic group over the field $k$ and
let $X$ denote a $H$-variety with $H$-action defined by
$\sigma : H \times X \rightarrow X$. We let
$p_2:  H \times X \rightarrow X$ denote projection on
the second coordinate. A {\it $H$-linearization} of a quasi-coherent sheaf $\cf$ on
$X$ is an $\co_{H \times X}$-linear isomorphism
$$ \phi : \sigma^* \cf \rightarrow p_2^* \cf,$$
satisfying the relation
\begin{equation}
\label{lin-rel}
(\mu \times {\bf 1}_X)^* \phi =
p_{23}^* \phi \circ ({\bf 1}_H \times \sigma)^* \phi,
\end{equation}
as morphisms of sheaves on $H \times H \times X$.
Here $\mu : H \times H \rightarrow H$ (resp. $p_{23} :
H \times H \times X \rightarrow H \times X$) denotes the
multiplication on $H$ (resp. the projection on the second
and third coordinate). Based on the fact that 
$\sigma^* \co_X  = p_2^* \co_X$ we see that the sheaf 
$\co_X$ admits a canonical linearization. In the 
following we will always assume that $\co_X$ is 
equipped with this canonical linearization.

A morphism $\psi : \cf \rightarrow \cf'$ of $H$-linearized
sheaves is a morphism of $\co_X$-modules commuting with
the linearizations $\phi$ and $\phi'$ of $\cf$ and $\cf'$,
i.e. $\phi' \circ \sigma^*(\psi) = p_2^*(\psi) \circ \phi$.

Linearized sheaves on $X$ form an abelian category which we denote by $Sh_H(X)$.

\subsection{Quotients and linearizations}

Assume that the quotient $q: X \to \nicefrac{X}{H}$ exists and that $q$ is a locally
trivial principal $H$-bundle. Then for $\cg \in Sh(\nicefrac{X}{H})$, $q^* \cg$
is naturally a $H$-linearized sheaf on $X$. This defines a functor
$q^*: Sh(\nicefrac{X}{H}) \to Sh_H(X)$. On the other hand, for $\cf \in
Sh_H(X)$, $q_* \cf$ has a natural action of $H$. Define a functor
$q_*^H: Sh_H(X) \to Sh(\nicefrac{X}{H})$ by $q_*^H(\cf)=(q_* \cf)^H$ the
subsheaf of $H$-invariants of $q_* \cf$. It is known that the
functor $q^*: Sh(\nicefrac{X}{H}) \to Sh_H(X)$ is an equivalence of categories with
inverse functor  $q_*^H$.

In general, if $H$ is a closed normal subgroup of $G$ and 
$X$ is a $G$-variety such that the quotient $\nicefrac{X}{H}$ exists (as above), 
then $\nicefrac{X}{H}$ is a $\nicefrac{G}{H}$-variety and the functor $q^*: Sh_{\nicefrac{G}{H}} (\nicefrac{X}{H}) \to Sh_G(X)$ is an equivalence of categories with inverse functor  $q_*^H: Sh_G(X) \to Sh_{\nicefrac{G}{H}}(\nicefrac{X}{H})$.

\subsection{Induction equivalence}
\label{Equivalence}
Consider now a connected linear algebraic
group $G$ and a parabolic subgroup $P$ in
$G$. Let $X$ denote a $P$-variety. Then
$G \times X$ is a $G \times P$-variety
by the action
$$(g,p) (h,x) = (ghp^{-1}, px),$$
for $g,h \in G$, $p \in P$ and $x \in X$.
Then the quotient, denoted by $G \times_P X$,
of $G \times X$ by $P$ exists and the  
associated quotient map $q : G \times X \to 
G \times_P X $ is a locally trivial principal
$P$-bundle.  
The quotient of $G \times X$ by $G$ also exists and  
may be identified with the projection $p_2 : G \times X \rightarrow X$.
In particular, we may apply the above
consideration to obtain equivalences
of the categories $Sh_P(X)$, $Sh_{G \times P} (G \times X)$ and $Sh_G(G \times_P X)$. 
Notice that
under this equivalence a $P$-linearized
sheaf $\cf$ on $X$ corresponds to the $G$-linearized
sheaf ${\mathcal Ind}^G_P(\cf)=(q_* p_2^*\cf)^P$. In
particular, the space of global sections of ${\mathcal Ind}^G_P(\cf)$
equals
\begin{align}
\label{Ind}
{\mathcal Ind}^G_P(\cf)(G \times_P X) &= \big(p_2^*\cf(G \times X)\big)^P \nonumber \\
&= \big(k[G] \otimes_k \cf(X)\big)^P 
 \\
&={\rm Ind}_P^G(\cf(X)), \nonumber 
\end{align}
where the second equality follows by the
K\"unneth formula. This also explains the
notation ${\mathcal Ind}_P^G(\cf)$.
Similarly, starting with a $G$-linearized
sheaf $\cg$ on $G \times_P X$ then the
associated
$P$-linearized line bundle on $X$ equals
$\cg' = ((p_2)_* q^* \cg)^G$. However,
by  \cite[Lemma 2(1)]{Bri} the latter
also equals the simpler pull back
$i^* \cg$ by the $P$-equivariant map
$$i : X \rightarrow G \times_P X,$$ sending $x$ to $q(1, x)$. 
In particular, we
conclude that the functor $i^*: Sh_G(G \times_P X) \to Sh_P(X)$ is an equivalence of categories with inverse functor  ${\mathcal Ind}_P^G$. Notice also that the
space of global sections of $\cg$ is 
$G$-equivariantly isomorphic to
$$ \cg(G \times_P X) = {\rm Ind}_P^G\big((i^* \cg)(X)\big),$$
which follows by (\ref{Ind}) above.

\subsection{Duality}
\label{lin-dual}
Assume that the field $k$ has positive characteristic $p>0$.
Regard $X'$ as a $H$-variety in the canonical 
way and let $\cf$ denote a $H$-linearized sheaf on $X'$. The 
sheaf $(F_X)^! \cf$, defined in Section \ref{duality}, is 
then naturally a $H$-linearized sheaf on $X$. Moreover, 
the induced $H$-linearization of $(F_X)_* (F_X)^! \cf$ 
coincides with the natural $H$-linearization of 
$$ \mathcal Hom_{\co_{X'}} \big( (F_X)_* \co_X , \cf\big).$$
When $X$ is smooth the sheaf $(F_X)^! \co_{X'}$ is 
canonically isomorphic to $\omega_X^{1-p}$ (cf. Section 
\ref{duality}). We may use this isomorphism to define 
a $H$-linearization of $\omega_X^{1-p}$. Alternatively 
we may consider the natural $H$-linearization of the 
dualizing sheaf $\Omega_{X}$ of $X$ and use this to 
define a $H$-linearization of $\omega_X^{1-p}$. It may 
be checked that the two stated ways of defining a 
$H$-linearization of $\omega_X^{1-p}$ coincide.

\section{Frobenius splitting of $G \times_P X$}
\label{frobsplit}
Let $G$ denote a connected linear algebraic group
over an algebraically closed field $k$ of 
characteristic $p>0$. Let $P$ denote a parabolic
subgroup of $G$ and let $X$ denote a $P$-variety.
In this section we want to consider Frobenius
splittings of the quotient $Z=G \times_P X$ of
$G \times X$ by $P$.  We let $\pi : Z \rightarrow
\nicefrac{G}{P}$ denote the morphism induced by
the projection of $G \times X$ on the first coordinate.
When $g \in G$ and $x \in X$ we use the notation
$[g,x]$ to denote the element in $Z$ represented by
$(g,x)$.

\subsection{Decomposing the Frobenius morphism}
\label{decomp}
The Frobenius morphism $F_Z$ admits a decomposition
$F_Z = F_b \circ F_f$  where $F_b$ (resp. $F_f$)
is related to the Frobenius morphism on the base
(resp. fiber) of $\pi$.  More precisely, define
$\hat{Z}$ and the morphisms $\hat \pi$ and $F_b$ as part of
the fiber product diagram
\begin{equation}
\label{fibrediagram}
\xymatrix{
\hat{Z} \ar[r]^{F_b} \ar[d]_{\hat \pi} & Z' \ar[d]^{\pi'} \\
\nicefrac{G}{P} \ar[r]^{ F_{\nicefrac{G}{P}}} & (\nicefrac{G}{P})'\\
}
\end{equation}
A local calculation shows that we may
identify $\hat Z$ with the quotient
$G \times_P X'$, where the $P$-action
on the Frobenius twist $X'$ of $X$ is
the natural one. With this identification
$\hat \pi : G \times_P X' \rightarrow \nicefrac{G}{P}$
is just the map $[g,x'] \mapsto gP$. It also
follows that the natural morphism (induced
by the Frobenius morphism on $X$)
$$ F_f : G \times_P X \rightarrow G \times_P X',$$
makes the following diagram commutative
$$
\xymatrix{
Z \ar[dr]|-{F_f} \ar@/^/[drr]^{F_Z} \ar@/_/[ddr]_{\pi} & & \\
& \hat{Z} \ar[r]^{F_b} \ar[d]_-{\hat \pi} & Z' \ar[d]^{\pi'} \\
& \nicefrac{G}{P} \ar[r]^{ F_{\nicefrac{G}{P}}} & (\nicefrac{G}{P})' \\
}
$$

\subsection{}
\label{intr}

Let $\cm$ denote a $P$-linearized line bundle on $X$ and let 
$\cm_Z = \mathcal Ind_P^G(\cm)$ denote the associated 
$G$-linearized line bundle on $Z$. The main 
aim of this section is to construct global sections
of the sheaf 
$$ \mathcal End_F^{\cm_Z} (Z) = \mathcal Hom_{\co_{Z'}}
\big ( (F_Z)_* \cm_Z , \co_{Z'} \big ). $$
To this end we fix a $P$-character $\lambda$ and 
let $\cl$ denote the associated line bundle on 
$\nicefrac{G}{P}$ (cf.  Section \ref{lin-sheaf}).
The pull back $\hat \pi^* \cl$ of $\cl$ to $\hat Z$
is then denoted by $\cl_{\hat Z}$. We then define 
the following sheaves 
$$ \mathcal End_F^{\cm_Z} (Z)_f := 
  \mathcal Hom_{\co_{\hat Z}} \big ( (F_f)_*
\cm_Z,  \co_{\hat Z} \big ), $$
$$ \mathcal End_F^{\cl_{\hat Z}}(Z)_b :=
\mathcal Hom_{\co_{Z'}} \big ( (F_b)_*
\cl_{\hat Z},  \co_{Z'} \big ), $$
with spaces of global sections denoted 
by $\End_F^{\cm_Z} (Z)_f$ and $\End_F^{\cl_{\hat Z}}(Z)_b$. Notice that when $\cm$ is 
substituted with the $P$-linearized twist
$\cm(-\lambda) := \cm \otimes k_{-\lambda}$  
then 
$$ \cm(-\lambda)_Z = \cm_Z \otimes \pi^*(\cl^{-1})
= \cm_Z \otimes (F_f)^* \cl_{\hat Z}^{-1},$$
and thus by the projection formula
\begin{equation}
\label{equ1}
\mathcal End_F^{\cm(-\lambda)_Z} (Z)_f = 
  \mathcal Hom_{\co_{\hat Z}} \big ( (F_f)_*
\cm_Z,  \cl_{\hat Z}  \big ). 
\end{equation}
Sections of $ \mathcal End_F^{\cm_Z} (Z)$ are
then constructed as compositions of global
sections of the sheaves 
$\mathcal End_F^{\cm(-\lambda)_Z} (Z)_f $ 
and $ \mathcal End_F^{\cl_{\hat Z}}(Z)_b$. More precisely,
if 
$$ v \in  \Hom_{\co_{\hat Z}} \big ( (F_f)_*
\cm_Z,  \cl_{\hat Z}   \big ), $$
$$ u \in \Hom_{\co_{Z'}} \big ( (F_b)_*
\cl_{\hat Z},  \co_{Z'} \big ), $$
are global sections of the latter sheaves, then
the composition $u \circ (F_b)_* v$ defines a global
section of $\mathcal End_F^{\cm_Z} (Z)$. 

\subsection{An equivariant setup} 
\label{equiv}

We now give equivariant descriptions
of 
the sheaves 
$\mathcal End_F^{\cm_Z} (Z)_f $ 
and $ \mathcal End_F^{\cl_{\hat Z}}(Z)_b$.

\subsubsection{A description of ${\End_F^{\cm_Z } (Z)_f }$}
\label{fiber}
Now ${\mathcal End}_F^{\cm_Z}(Z)_f$
is a $G$-linearized sheaf on $\hat Z=G \times_P X'$.
Let $Y$ denote a $P$-stable subvariety of
$X$ and let $Z_Y = G \times_P Y$ denote the associated
subvariety of $Z$ with sheaf of ideals $\mathcal I_{Z_Y}
\subset \co_{Z}$. Let  $\hat Z_Y$ denote the subvariety
$G \times_P Y'$ of $\hat Z$. Then there is
a natural morphism of $G$-linearized sheaves
$$ {\mathcal End}^{\cm_Z}_F(Z)_f  \rightarrow
{\mathcal Hom}_{\co_{\hat Z}} \big( (F_f)_* (\cm_Z \otimes \mathcal I_{Z_Y}) ,
\co_{\hat Z_Y} \big), $$
induced by the inclusion $ \mathcal I_{Z_Y} \subset
\co_Z$ and the projection $\co_{\hat Z} \rightarrow
\co_{\hat Z_Y}$. We let ${\mathcal End}^{\cm_Z}_F(Z,Z_Y)_f$
denote the kernel of the above map
and arrive at a left exact sequence of $G$-linearized
sheaves
\begin{equation}
\label{exact1}
0 \rightarrow {\mathcal End}^{\cm_Z}_F(Z,Z_Y)_f
\rightarrow {\mathcal End}^{\cm_Z}_F(Z)_f
\rightarrow {\mathcal Hom}_{\co_{\hat Z}} \big( (F_f)_* (\cm_Z
\otimes \mathcal I_{Z_Y}) ,
\co_{\hat Z_Y} \big).
\end{equation}
In particular, the space of 
global sections of ${\mathcal End}^{\cm_Z}_F(Z,Z_Y)_f$
is identified with the set of elements in
${\End}_{F}^{\cm_Z}(Z)_f$ which map $(F_f)_* (\cm_Z \otimes
\mathcal I_{Z_Y})$ to $\mathcal I_{\hat Z_Y}  \subset
\co_{\hat Z}$.
Using the observations in Section \ref{Equivalence}
we can give another description of the space of global
sections of ${\mathcal End}^{\cm_Z}_F(Z,Z_Y)_f$.
Let $i' : X' \rightarrow G \times_P X'$ denote the
morphism $i'(x') = [1,x']$. Then the functor $i'$ is exact on the
category of $G$-linearized sheaves. We want to apply
this fact on the left exact sequence (\ref{exact1}) above : notice
first that
$$  (i')^*{\mathcal End}_F^{\cm_Z}(Z)_f =
 {\mathcal Hom}_{\co_{X'}} \big( (i')^*(F_f)_* \cm_Z ,
\co_{X'} \big),$$
where, moreover, $(i')^* (F_f)_* \cm_{Z}
= (F_X)_*\cm$. Thus
$  (i')^*{\mathcal End}_F^{\cm_Z}(Z)_f =
{\mathcal End}^\cm_F(X)$.
Similarly,
$$  (i')^*{\mathcal Hom}_{\co_{\hat Z}} \big( (F_f)_* (\cm_Z \otimes 
\mathcal I_{Z_Y} ),
 \co_{\hat Z_Y}     \big) = {\mathcal Hom}_{\co_{X'}} (
(F_X)_* (\cm \otimes \mathcal I_Y), \co_{Y'}).$$
In particular, we see that
the $P$-linearized sheaf on $X'$ corresponding
to the $G$-linearized sheaf ${\mathcal End}^{\cm_Z}_F(Z,Z_Y)_f$
equals the kernel of the natural map
$$ {\mathcal End}^\cm_F(X)  \rightarrow
{\mathcal Hom}_{\co_{X'}} (
(F_X)_* \mathcal (\cm \otimes \mathcal I_Y), \co_{Y'}),$$
i.e. it equals $ {\mathcal End}^\cm_F(X,Y) $.
By Section \ref{Equivalence} the space of global sections
$\End_F^{\cm_Z} (Z,Z_Y)_f $
of  ${\mathcal End}^{\cm_Z}_F(Z,Z_Y)_f$
is then $G$-equivariantly isomorphic to
$$ {\rm Ind}_P^G \big( {\rm End}^\cm_F(X,Y)).$$
Applying the above conclusions to the 
sheaf $\cm(-\lambda)$ we find:

\begin{prop}
\label{propfiber}
There exists a $G$-equivariant isomorphism 
$$\End_F^{\cm(-\lambda)_Z} (Z)_f 
\simeq
{\rm Ind}_P^G \big( {\rm End}^\cm_F(X) \otimes k_\lambda)
$$
such that when $Y$ is a closed $P$-stable subvariety 
of $X$ then the subset of elements of
$\End_F^{\cm(-\lambda)_Z} (Z)_f$
which map $(F_f)_* (\cm_Z \otimes \mathcal I_{Z_Y})$ to 
$(\mathcal I_{\hat Z_Y} \otimes \cl_{\hat Z})
\subset \cl_{\hat Z}$  (cf. equation 
(\ref{equ1})) is identified with
$$ 
\End_F^{\cm(-\lambda)_Z} (Z,Z_Y)_f 
\simeq
{\rm Ind}_P^G \big( 
{\rm End}^\cm_F(X,Y) \otimes k_\lambda).$$
\end{prop}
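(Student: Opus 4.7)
The plan is to repeat the analysis of the paragraph immediately preceding the proposition, this time with the $P$-linearized line bundle $\cl_{\hat Z}$ in place of $\co_{\hat Z}$ as the target. By equation (\ref{equ1}) the sheaf $\mathcal End_F^{\cm(-\lambda)_Z}(Z)_f$ equals $\mathcal Hom_{\co_{\hat Z}}\bigl((F_f)_* \cm_Z, \cl_{\hat Z}\bigr)$, and an argument strictly parallel to the one producing (\ref{exact1}) yields a left exact sequence of $G$-linearized sheaves on $\hat Z$ whose kernel is, by definition, the subsheaf of elements sending $(F_f)_*(\cm_Z \otimes \mathcal I_{Z_Y})$ into $\mathcal I_{\hat Z_Y} \otimes \cl_{\hat Z}$, and whose right-hand term is $\mathcal Hom_{\co_{\hat Z}}\bigl((F_f)_*(\cm_Z \otimes \mathcal I_{Z_Y}), \co_{\hat Z_Y} \otimes \cl_{\hat Z}\bigr)$.

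Next I would apply the induction equivalence of Section \ref{Equivalence} by pulling back along $i' : X' \to \hat Z$, $x' \mapsto [1,x']$, using the exactness of $(i')^*$ on the category of $G$-linearized sheaves. The new ingredient compared with the untwisted case is the identification $(i')^* \cl_{\hat Z} \simeq k_\lambda$ of $P$-linearized sheaves on $X'$. This holds because $\cl_{\hat Z} = \hat\pi^* \cl$, while $\hat\pi \circ i'$ is constant with value $eP \in \nicefrac{G}{P}$, so $(i')^* \cl_{\hat Z}$ is the fiber of $\cl$ at $eP$, carrying precisely the $P$-character $\lambda$. Combined with the computations $(i')^*(F_f)_* \cm_Z = (F_X)_* \cm$ and $(i')^* \co_{\hat Z_Y} = \co_{Y'}$ already recorded in the preceding paragraph, pulling back the exact sequence above produces the $P$-linearized sequence
$$0 \to \mathcal End_F^\cm(X,Y) \otimes k_\lambda \to \mathcal End_F^\cm(X) \otimes k_\lambda \to \mathcal Hom_{\co_{X'}}\bigl((F_X)_*(\cm \otimes \mathcal I_Y), \co_{Y'} \otimes k_\lambda\bigr).$$
Taking global sections via (\ref{Ind}) and unwinding the induction equivalence then delivers both stated isomorphisms, together with the matching of the compatibility subspace with $\mathrm{Ind}_P^G\bigl(\End_F^\cm(X,Y) \otimes k_\lambda\bigr)$.

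I expect the only point requiring some care to be bookkeeping of linearizations. Specifically, one must verify that the $P$-linearization on $(i')^* \cl_{\hat Z}$ obtained by viewing $\cl_{\hat Z}$ as the $G$-linearized line bundle on $\hat Z$ corresponding via $\hat\pi^*$ to the $P$-character $\lambda$ coincides with the canonical $P$-action by $\lambda$ on $k_\lambda$. Once this compatibility is in place the rest of the argument is a direct transport, via the tensor factor $k_\lambda$, of the machinery already used for $\cm_Z$ in the preceding paragraph.
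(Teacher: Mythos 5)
Your argument is correct and follows essentially the same route as the paper: the paper first establishes the untwisted identification $\mathcal End^{\cm_Z}_F(Z,Z_Y)_f \leftrightarrow \mathcal End^\cm_F(X,Y)$ by applying the exact functor $(i')^*$ to the sequence (\ref{exact1}) and computing the pullbacks, and then obtains the proposition by substituting $\cm(-\lambda)$ for $\cm$. Your only variation is to handle the twist directly inside the $(i')^*$ computation via the observation $(i')^* \cl_{\hat Z} \simeq k_\lambda$, which is just a rephrasing of the fact the paper uses implicitly, namely that $\mathcal End^{\cm(-\lambda)}_F(X) \simeq \mathcal End^\cm_F(X) \otimes k_\lambda$ as $P$-linearized sheaves; the two bookkeeping choices are interchangeable and both rest on the same exactness and induction-equivalence machinery.
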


\subsubsection{A description of ${\End}_F^{\cl_{\hat Z}}(Z)_b$}
\label{base}
As $\pi'$ in the fibre-diagram (\ref{fibrediagram}) is flat
the natural morphism $(\pi')^* (F_{\nicefrac{G}{P}})_* \cl \rightarrow
(F_b)_* \hat \pi^* \cl$  is an isomorphism (\cite[Prop.III.9.3]{Har2}).
Thus there is a natural isomorphism of $G$-linearized sheaves
$${\mathcal End}_F^{\cl_{\hat Z}}(Z)_b \simeq (\pi')^* {\mathcal Hom}_{\co_{(\nicefrac{G}{P})'}}
\big( (F_{\nicefrac{G}{P}})_*  \cl ,  \co_{(\nicefrac{G}{P})'} \big)
= (\pi')^* {\mathcal End}_F^\cl(\nicefrac{G}{P}).$$
Let $V$ denote a closed subvariety of $\nicefrac{G}{P}$. Then
${\mathcal End}_F^\cl(\nicefrac{G}{P},V)$
is the kernel of the natural map 
$$ {\mathcal End}_F^\cl(\nicefrac{G}{P})
\rightarrow
 {\mathcal Hom}_{\co_{(\nicefrac{G}{P})'}} \big( (F_{\nicefrac{G}{P}})_*
(\mathcal I_V \otimes \cl) ,  \co_{\co_{V'}} \big).$$
In particular, $(\pi')^*\big({\mathcal End}_F^\cl(\nicefrac{G}{P},
V)\big)$
maps into the kernel of the induced morphism
\begin{equation}
\label{equ2}  {\mathcal End}_F^{\cl_{\hat Z}}(Z)_b
\rightarrow
(\pi')^* {\mathcal Hom}_{\co_{(\nicefrac{G}{P})'}} \big( (F_{\nicefrac{G}{P}})_*
(\mathcal I_V \otimes \cl) ,  \co_{\co_{V'}} \big).
\end{equation}
Let $q : G \rightarrow \nicefrac{G}{P}$ denote the
quotient map. Then $\hat \pi^{-1}(V)$ identifies with the
quotient $q^{-1}(V) \times_P X'$. Moreover, as $\pi'$ is 
locally trivial it follows that
$\hat{\pi}^*(\mathcal I_V) = \mathcal I_{  q^{-1}(V) \times_P X'}$.
In particular, the sheaf
$$ (\pi')^*{\mathcal Hom}_{\co_{(\nicefrac{G}{P})'}} \big( (F_{\nicefrac{G}{P}})_*
(\mathcal I_V \otimes \cl) ,  \co_{\co_{V'}} \big),$$
is isomorphic to
$$ {\mathcal Hom}_{\co_{Z'}} \big( (F_{b})_*  (\mathcal I_{q^{-1}(V) \times_P X'}
\otimes \cl_{\hat Z}) ,  \co_{(q^{-1}(V) \times_P X)'} \big).$$
Thus we see that the kernel of (\ref{equ2}) is the 
subsheaf ${\mathcal End}_F^{\cl_{\hat Z}}(Z,{\pi^{-1}(V)} )_b$ 
of elements which map $(F_{b})_*  (\mathcal I_{q^{-1}(V) \times_P X'}
\otimes \cl_{\hat Z})$ to $\mathcal I_{(q^{-1}(V) \times_P X)'}$.
The global sections of this subsheaf is denote by 
${\End}_F^{\cl_{\hat Z}}(Z,{\pi^{-1}(V)} )_b$. 
In conclusion

\begin{prop}
\label{propbase}
The map $\pi '$ induces a $G$-equivariant morphism
$$ (\pi')^* : {\rm End}^\cl_F(\nicefrac{G}{P})
\rightarrow  {\End}_F^{\cl_{\hat Z}}(Z)_b.$$
Moreover, when $V$ is a closed subvariety of 
$\nicefrac{G}{P}$ then $(\pi')^*$ maps 
the subset ${\End}_F^\cl(\nicefrac{G}{P},V)$
into ${\End}_F^{\cl_{\hat Z}}(Z,{q^{-1}(V) \times_P X} )_b.$
\end{prop}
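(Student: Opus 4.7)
The proposition is essentially a repackaging of the identifications already worked out in Section \ref{base}, so the plan is mostly to organize these into the stated $(\pi')^*$ map and verify that the compatibility condition is preserved. The key geometric input is that $\pi'$ is locally trivial, hence flat, which justifies both base change for $(F_b)_*$ and the identity $\hat{\pi}^*\mathcal{I}_V = \mathcal{I}_{q^{-1}(V) \times_P X'}$.

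For the first assertion, I would start from the base change isomorphism $(\pi')^*(F_{\nicefrac{G}{P}})_*\cl \simeq (F_b)_*\cl_{\hat Z}$ (flat base change applied to diagram (\ref{fibrediagram}), which was already invoked in the excerpt via \cite[Prop.III.9.3]{Har2}). Applying $\mathcal{H}om_{\co_{Z'}}(-,\co_{Z'})$ and using the standard identification of pullback with internal Hom (valid since $\cl$ is a line bundle and $(F_{\nicefrac{G}{P}})_*\cl$ is coherent) yields the isomorphism of $G$-linearized sheaves $\mathcal{E}nd_F^{\cl_{\hat Z}}(Z)_b \simeq (\pi')^*\mathcal{E}nd_F^\cl(\nicefrac{G}{P})$ stated in Section \ref{base}. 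The pullback functor on global sections then supplies the $k$-linear map $(\pi')^* : \End_F^\cl(\nicefrac{G}{P}) \to \End_F^{\cl_{\hat Z}}(Z)_b$. The $G$-equivariance is automatic: since $\pi'$ is $G$-equivariant and the base change and Hom-pullback isomorphisms are natural transformations of functors on $G$-linearized sheaves (cf.\ Section \ref{lin-sheaf}), the resulting map commutes with the $G$-actions on global sections.

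For the compatibility statement, the argument is pure functoriality. A global section $s$ of $\mathcal{E}nd_F^\cl(\nicefrac{G}{P},V)$ is by definition a section of $\mathcal{E}nd_F^\cl(\nicefrac{G}{P})$ that dies under the structure map
$$\mathcal{E}nd_F^\cl(\nicefrac{G}{P}) \to \mathcal{H}om_{\co_{(\nicefrac{G}{P})'}}\big((F_{\nicefrac{G}{P}})_*(\mathcal{I}_V \otimes \cl),\, \co_{V'}\big).$$
Applying the exact functor on morphisms given by $(\pi')^*$ kills $(\pi')^*s$ against the pullback of this map. By the identification in Section \ref{base}, the target pulls back to $\mathcal{H}om_{\co_{Z'}}\big((F_b)_*(\mathcal{I}_{q^{-1}(V)\times_P X'} \otimes \cl_{\hat Z}),\, \co_{(q^{-1}(V)\times_P X)'}\big)$, and the induced map is precisely the arrow (\ref{equ2}). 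Hence $(\pi')^*s$ lies in the kernel of (\ref{equ2}), i.e.\ in $\End_F^{\cl_{\hat Z}}(Z, q^{-1}(V)\times_P X)_b$, which is exactly the claim.

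The only substantive step, and the one I would be most careful about, is the base change isomorphism combined with the identity $\hat{\pi}^*\mathcal{I}_V = \mathcal{I}_{q^{-1}(V)\times_P X'}$; both require flatness (equivalently, local triviality) of $\pi'$. Neither is truly an obstacle since $\pi'$ is a locally trivial fiber bundle, but they are the load-bearing inputs of the proof and the rest of the verification is diagram chasing.
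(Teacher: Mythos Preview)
Your proposal is correct and follows essentially the same approach as the paper: the proposition is stated in the paper as a summary (``In conclusion'') of the discussion in Section \ref{base}, and your write-up simply organizes that discussion---flat base change for the fibre-diagram (\ref{fibrediagram}), the identity $\hat\pi^*\mathcal{I}_V = \mathcal{I}_{q^{-1}(V)\times_P X'}$, and functoriality of pullback on the defining exact sequence---into a proof. There is no substantive difference.
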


The following is also useful.

\begin{lem}
\label{lembase}
Let $Y$ denote a closed $P$-stable subvariety
of $X$ and fix notation as above. Then each element of
${\End}_F^{\cl_{\hat Z}}(Z)_b $ maps
$(F_b)_*(\mathcal I_{\hat Z_Y} \otimes \cl_{\hat Z}) $
to $\mathcal I_{(Z_Y)'}$.
\end{lem}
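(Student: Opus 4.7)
The plan is to prove the lemma by a local calculation, exploiting the fact that $F_b$ is obtained by base change from $F_{\nicefrac{G}{P}}$ and therefore acts as the identity in the fiber direction of $\hat{\pi}$. Since the claim is a containment of quasi-coherent subsheaves of $\co_{Z'}$, it suffices to verify it on an open cover of $Z'$.

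First, I would choose an open subset $U \subset \nicefrac{G}{P}$ admitting a trivialization $\pi^{-1}(U) \simeq U \times X$ (such $U$ exist because the principal bundle $G \to \nicefrac{G}{P}$ is locally trivial and $\pi$ is the associated fiber bundle with fiber $X$). Using the fiber product description in diagram (\ref{fibrediagram}), I would then identify $\hat{\pi}^{-1}(U) \simeq U \times X'$ in such a way that $F_b$ restricts to $F_U \times \mathrm{id}_{X'} : U \times X' \to U' \times X'$. Under these identifications, $\hat{Z}_Y = G \times_P Y'$ corresponds locally to $U \times Y' \subset U \times X'$, while $(Z_Y)'$ corresponds to $U' \times Y' \subset U' \times X'$, so both $\mathcal I_{\hat{Z}_Y}$ and $\mathcal I_{(Z_Y)'}$ are locally generated by the pullback of $\mathcal I_{Y'} \subset \co_{X'}$ from the second factor. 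Since $\cl_{\hat{Z}} = \hat{\pi}^* \cl$ is pulled back from the base, one obtains the local description $(F_b)_*(\mathcal I_{\hat{Z}_Y} \otimes \cl_{\hat{Z}}) \simeq (F_U)_* \cl|_U \boxtimes \mathcal I_{Y'}$.

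The conclusion then follows from the $\co_{Z'}$-linearity of any $s \in \End_F^{\cl_{\hat{Z}}}(Z)_b$. Under the local decomposition of $F_b$, the sheaf map $F_b^\sharp$ acts as the identity on the factor $\co_{X'}$, so functions pulled back from $\co_{X'}$ sit inside both $(F_b)_* \co_{\hat{Z}}$ and $\co_{Z'}$ without any $p$-th power twist. Consequently, the $\co_{Z'}$-linearity of $s$ specializes to ordinary $\co_{X'}$-linearity in the fiber direction. For a local section of the form $\alpha \cdot \beta$ with $\alpha$ a section of $(F_b)_* \cl_{\hat{Z}}$ and $\beta \in \mathcal I_{Y'}$, linearity then gives $s(\alpha \cdot \beta) = \beta \cdot s(\alpha) \in \mathcal I_{Y'} \cdot \co_{Z'} = \mathcal I_{(Z_Y)'}$ locally, as required.

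The principal subtlety I expect is carefully establishing the local decomposition $F_b = F_U \times \mathrm{id}_{X'}$, which requires unraveling the fiber product construction $\hat{Z} = \nicefrac{G}{P} \times_{(\nicefrac{G}{P})'} Z'$ together with the local triviality of $\pi$. Once this is done, however, the lemma essentially reduces to the automatic compatibility of a $\co_{Z'}$-linear map with ideals supported purely in the fiber direction, where the Frobenius acts trivially.
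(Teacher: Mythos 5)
Your proof is correct and follows essentially the same route as the paper: the paper reduces the claim to the isomorphism $\mathcal I_{(Z_Y)'} \otimes (F_b)_* \cl_{\hat Z} \simeq (F_b)_*(\mathcal I_{\hat Z_Y} \otimes \cl_{\hat Z})$ (stated as a local calculation) and then invokes $\co_{Z'}$-linearity, and your argument is precisely that local calculation made explicit via a trivializing chart $\pi^{-1}(U) \simeq U \times X$ together with the observation that $F_b$ acts as the identity in the $X'$-direction. The two presentations differ only in how much of the local verification is left to the reader.
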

\begin{proof}
It suffices to show that the natural morphism
$$ {\mathcal Hom}_{\co_{Z'}}
\big( (F_b)_* \cl_{\hat Z} ,  \co_{Z'} \big)
\rightarrow
 {\mathcal Hom}_{\co_{Z'}}
\big( (F_b)_* ( \mathcal I_{\hat Z_Y} \otimes \cl_{\hat Z}),  \co_{(Z_Y)'} \big)$$
is zero. By linearity, this will follow if the natural
morphism
$$  \mathcal I_{(Z_Y)'} \otimes  (F_b)_*
\cl_{\hat Z} \rightarrow  (F_b)_* ( \mathcal I_{\hat Z_Y}
\otimes \cl_{\hat Z}),$$
is an isomorphism, which can be checked by a
local calculation.
\end{proof}

\subsection{Conclusions}

By Proposition \ref{propfiber} an element 
$v$ in the vectorspace ${\rm Ind}_P^G \big( {\rm End}^\cm _F(X) 
\otimes k_\lambda \bigr)$ defines an element
in $\End_F^{\cm(-\lambda)_Z} (Z)_f $. Moreover, by Proposition 
\ref{propbase}, an element $u \in  
{\rm End}_F^\cl(\nicefrac{G}{P})$ defines an element $(\pi')^*(u)$ 
in $ {\End}_F^{\cl_{\hat Z}}(Z)_b.$ Thus by 
the discussion in Section \ref{intr}
we obtain a $G$-equivariant map 
$$\Phi_{\cm , \lambda}^1  :   {\rm End}_F^\cl (\nicefrac{G}{P})  \otimes
{\rm Ind}_P^G \big( {\rm End}^\cm _F(X) \otimes k_\lambda \bigr)
\rightarrow  {\End}^{\cm_Z}_F(G \times_P X ),$$
defined by 
$$ \Phi_{\cm , \lambda}^1(u \otimes v) = (\pi')^* u \circ (F_b)_* v.$$
We can now prove

\begin{thm}
\label{thm1}
Let $X$ denote a $P$-variety and $\cm$ denote a
$P$-linearized line bundle on $X$. Let $\mathcal L$ 
denote the equivariant line bundle on $\nicefrac{G}{P}$ 
associated to the $P$-character $\lambda$. 
Then the $G$-equivariant map $\Phi_{\cm , \lambda}^1 $,
defined above, satisfies
\begin{enumerate}
\item When $Y$ is a $P$-stable closed subvariety of $X$
then the restriction of $\Phi_{\cm, \lambda}^1$ to the subspace :
$$   {\rm End}_F^\cl (\nicefrac{G}{P})
\otimes
{\rm Ind}_P^G \big( {\rm End}^\cm _F(X,Y) \otimes k_\lambda \big),$$
maps to $ {\End}^{\cm_Z }_F(G \times_P X ,G \times_P Y)$.
\item When $V$ denotes a closed subvariety of $\nicefrac{G}{P}$
then the restriction of $\Phi_{\cm, \lambda}^1$ to the subspace
$$ {\rm End}_F^\cl (\nicefrac{G}{P},V) \otimes
{\rm Ind}_P^G \big( {\rm End}^\cm_F(X) \otimes k_\lambda \big),$$
maps to $ {\End}^{\cm_Z }_F(G \times_P X,q^{-1} (V)\times _P X)$, where
$q : G \rightarrow \nicefrac{G}{P}$ denotes the quotient map.
\end{enumerate}
\end{thm}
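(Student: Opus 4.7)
The plan is to verify both parts as direct consequences of the descriptions obtained in Propositions \ref{propfiber} and \ref{propbase}, together with Lemma \ref{lembase}, by tracking ideal sheaves through the factorization $F_Z = F_b \circ F_f$ from Section \ref{decomp}.

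For part (1), I would take $v \in {\rm Ind}_P^G\bigl({\rm End}^\cm_F(X,Y) \otimes k_\lambda\bigr)$ and use Proposition \ref{propfiber} to view it as an $\co_{\hat Z}$-linear morphism $v \colon (F_f)_* \cm_Z \to \cl_{\hat Z}$ which carries $(F_f)_*(\cm_Z \otimes \mathcal I_{Z_Y})$ into $\mathcal I_{\hat Z_Y} \otimes \cl_{\hat Z}$. Applying $(F_b)_*$ and composing with $(\pi')^* u$, the key input is Lemma \ref{lembase}: for \emph{any} $u \in \End_F^\cl(\nicefrac{G}{P})$, the element $(\pi')^* u$ maps $(F_b)_*(\mathcal I_{\hat Z_Y} \otimes \cl_{\hat Z})$ into $\mathcal I_{(Z_Y)'}$. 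Since $(F_Z)_* = (F_b)_* (F_f)_*$, the composition $\Phi_{\cm,\lambda}^1(u \otimes v) = (\pi')^* u \circ (F_b)_* v$ therefore sends $(F_Z)_*(\cm_Z \otimes \mathcal I_{Z_Y})$ into $\mathcal I_{(Z_Y)'}$, which is precisely compatibility with $Z_Y = G \times_P Y$.

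For part (2), I would reverse the roles: $v$ is arbitrary and $u \in \End_F^\cl(\nicefrac{G}{P},V)$. The $\co_{\hat Z}$-linearity of $v$ already forces it to map $(F_f)_* \cm_Z \otimes \mathcal I_{\hat\pi^{-1}(V)}$ into $\cl_{\hat Z} \otimes \mathcal I_{\hat\pi^{-1}(V)}$. Using $\pi = \hat\pi \circ F_f$ and flatness of $\hat\pi$, so that $\mathcal I_{\pi^{-1}(V)} = (F_f)^* \mathcal I_{\hat\pi^{-1}(V)}$, the projection formula identifies this with the statement that $v$ sends $(F_f)_*(\cm_Z \otimes \mathcal I_{\pi^{-1}(V)})$ into $\mathcal I_{\hat\pi^{-1}(V)} \otimes \cl_{\hat Z}$. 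Under the fibre square (\ref{fibrediagram}) the subvariety $\hat\pi^{-1}(V)$ agrees with $q^{-1}(V) \times_P X'$; Proposition \ref{propbase} then places $(\pi')^* u$ in $\End_F^{\cl_{\hat Z}}(Z,\,q^{-1}(V) \times_P X)_b$, so $(\pi')^* u$ carries $(F_b)_*(\mathcal I_{\hat\pi^{-1}(V)} \otimes \cl_{\hat Z})$ into $\mathcal I_{(q^{-1}(V) \times_P X)'}$. Invoking once more the identity $(F_Z)_* = (F_b)_* (F_f)_*$ and that $\pi^{-1}(V) = q^{-1}(V) \times_P X$ as subsets of $Z$ yields the desired compatibility.

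The main obstacle is purely bookkeeping: one must confirm the two ideal-sheaf identifications $\mathcal I_{\pi^{-1}(V)} = (F_f)^* \mathcal I_{\hat\pi^{-1}(V)}$ (which requires flatness of $\hat\pi$) and $\hat\pi^{-1}(V) = q^{-1}(V) \times_P X'$ as subvarieties of $\hat Z$ (which follows from the fibre square). Once these identifications are in place, both statements of the theorem become formal consequences of the three previously established results, with no further input needed.
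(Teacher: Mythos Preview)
Your proof is correct and follows essentially the same route as the paper: part (1) is exactly Proposition \ref{propfiber} combined with Lemma \ref{lembase}, and part (2) is Proposition \ref{propbase} combined with the ideal-sheaf identification you carry out inline. The only cosmetic difference is that the paper isolates your inline argument for part (2)---that any $v \in \End_F^{\cm(-\lambda)_Z}(Z)_f$ sends $(F_f)_*(\cm_Z \otimes \mathcal I_{\pi^{-1}(V)})$ into $\mathcal I_{\hat\pi^{-1}(V)} \otimes \cl_{\hat Z}$---as a separate lemma (Lemma \ref{lem1}), proved by the same local calculation underlying your projection-formula step.
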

\begin{proof}
 The first statement follows from Proposition
\ref{propfiber} and Lemma \ref{lembase}. The second
statement follows from Proposition \ref{propbase}
and Lemma \ref{lem1} below.
\end{proof}

\begin{lem}
\label{lem1}
Let $V$ denote a closed subset of $\nicefrac{G}{P}$.
Then every element of $ {\mathcal End}_F^{\cm(-\lambda)_Z }(Z)_f$
will map $(F_f)_*  ( \cm_Z \otimes \mathcal I_{\pi^{-1}(V)})$ to
$\mathcal I_{(\hat \pi)^{-1}(V)} \otimes \cl_{\hat Z}$.
\end{lem}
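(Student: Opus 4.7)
The claim is local on $\nicefrac{G}{P}$, so my plan is to choose a trivializing open $W \subseteq \nicefrac{G}{P}$ for the locally trivial fiber bundle $\pi$ and verify the inclusion over $\hat\pi^{-1}(W)$. Over such a $W$ we have $\pi^{-1}(W) \simeq W \times X$ and $\hat\pi^{-1}(W) \simeq W \times X'$, and under these identifications $F_f$ restricts to $\mathrm{id}_W \times F_X$. Flatness of $\pi$ and $\hat\pi$ yields
$$
\mathcal I_{\pi^{-1}(V)}|_W = \mathcal I_{V\cap W} \boxtimes \co_X,
\qquad
\mathcal I_{\hat\pi^{-1}(V)}|_W = \mathcal I_{V\cap W} \boxtimes \co_{X'}.
$$

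The key compatibility is that for a local section $h$ of $\mathcal I_{V\cap W}$ the external product $h \boxtimes 1$ makes sense both as a section of $\mathcal I_{\pi^{-1}(V)}$ inside $\co_{W \times X}$ and as a section of $\mathcal I_{\hat\pi^{-1}(V)}$ inside $\co_{W \times X'}$, and the two correspond under $(F_f)^\sharp$ simply because $F_X^\sharp(1) = 1$. Moreover, $\mathcal I_{V \cap W} \boxtimes \co_X$ is generated as an $\co_{W \times X}$-module by the sections $h \boxtimes 1$, so every local section of $(F_f)_*\bigl(\cm_Z \otimes \mathcal I_{\pi^{-1}(V)}\bigr)$ can be written as a finite sum $\sum_j \sigma_j \cdot (h_j \boxtimes 1)$ with $\sigma_j$ a local section of $\cm_Z$ and $h_j$ a local section of $\mathcal I_{V \cap W}$.

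Viewed as a section of $(F_f)_* \cm_Z$, the product $\sigma_j \cdot (h_j \boxtimes 1)$ coincides, by the above compatibility, with the result of letting $h_j \boxtimes 1 \in \co_{\hat Z}$ act on $\sigma_j$ through the pushforward $\co_{\hat Z}$-module structure on $(F_f)_* \cm_Z$. Since $s$ is $\co_{\hat Z}$-linear and $h_j \boxtimes 1$ is a local section of $\mathcal I_{\hat\pi^{-1}(V)}$, I obtain
$$
s\bigl(\sigma_j \cdot (h_j \boxtimes 1)\bigr)
= (h_j \boxtimes 1) \cdot s(\sigma_j)
\in \mathcal I_{\hat\pi^{-1}(V)} \otimes \cl_{\hat Z},
$$
and the lemma follows by linearity. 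The only delicate point is the passage between the $\co_Z$-action inside the source $\cm_Z$ and the $\co_{\hat Z}$-action through which $s$ is linear; this passage is furnished precisely by the identification $(F_f)^\sharp(h \boxtimes 1) = h \boxtimes 1$, which expresses the principle that ideals pulled back from the base are insensitive to a purely fiberwise Frobenius.
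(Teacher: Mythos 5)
Your proof is correct and carries out precisely the ``local calculation'' that the paper's one-line proof invokes: the paper reduces the lemma to the assertion that the natural morphism $\mathcal I_{(\hat\pi)^{-1}(V)} \otimes (F_f)_* \cm_Z \to (F_f)_*(\mathcal I_{\pi^{-1}(V)} \otimes \cm_Z)$ is an isomorphism, to be checked locally, and your trivialization-plus-$\co_{\hat Z}$-linearity argument is exactly that check written out, merely bypassing the explicit statement of the isomorphism by pushing the generating sections $h_j \boxtimes 1$ through $(F_f)^\sharp$ directly.
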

\begin{proof}
It suffices to prove that the natural morphism
$$  \mathcal I_{(\hat \pi)^{-1}(V)} \otimes (F_f)_* \cm_Z \rightarrow
(F_f)_* \big(\mathcal I_{\pi^{-1}(V)} \otimes \cm_Z\big),$$
is an isomorphism, which can be checked by a local
calculation.
\end{proof}

\subsection{}
\label{Phi}
Identify ${\rm Ind}_P^G \big (\cm(X) \big)$ with the 
space of global sections of $\cm_Z$ (cf. Equation (\ref{Ind})).
Then we can define a $G$-equivariant morphism 
\begin{equation} 
\label{e1}
{\End}^{\cm_Z}_F(G \times_P X ) \otimes  {\rm Ind}_P^G \big (\cm(X) \big)  
\rightarrow 
{\End}_F(G \times_P X ), 
\end{equation}
by mapping $s \otimes \sigma$, for $\sigma$ a global
section of $\cm_Z$ and $s : (F_Z)_* \cm_Z \rightarrow
\co_{Z'}$, to the element
$$ (F_Z)_* \co_{Z} \xrightarrow{(F_Z)_* \sigma} (F_Z)_* \cm_Z
\xrightarrow{s} \co_{Z'},$$
in ${\End}_F(G \times_P X )$. Combining $\Phi_{\cm , \lambda}^1$
with the morphism in (\ref{e1}) we obtain a $G$-equivariant 
map 
$$
\Phi_{\cm,\lambda} : {\rm End}_F^\cl (\nicefrac{G}{P})  \otimes
{\rm Ind}_P^G \big( {\rm End}^\cm _F(X) \otimes k_\lambda \bigr)
\otimes  {\rm Ind}_P^G \big (\cm(X) \big)  \rightarrow
{\End}_F(Z ),$$
where an element $u \otimes v \otimes \sigma$ in the
domain is mapped
to the composed map
\begin{equation}
\label{comp}
(F_Z)_* \co_{Z} \xrightarrow{(F_Z)_* \sigma}
(F_Z)_* \cm_Z \xrightarrow{(F_b)_* v} (F_b)_* \cl_{\hat Z}
\xrightarrow{(\pi')^*u} \co_{Z'}. 
\end{equation}
Notice that by Lemma \ref{duality1} the map $u 
\in  {\rm End}_F^\cl (\nicefrac{G}{P})$ factors 
as 
\begin{equation}
\label{fact}
 (F_{\nicefrac{G}{P}})_* \cl 
\xrightarrow{ (F_{\nicefrac{G}{P}})_* u^!} 
 (F_{\nicefrac{G}{P}})_*
\omega_{\nicefrac{G}{P}}^{1-p} \rightarrow 
\co_{ (\nicefrac{G}{P})'},
\end{equation}
where $u^!$ is some global section of the line 
bundle $\check{\cl} := \omega_{\nicefrac{G}{P}}^{1-p} \otimes 
\cl^{-1}$ associated to $u$ (cf. Section 
\ref{duality}), and the rightmost map 
is the evaluation map
with domain $(F_{\nicefrac{G}{P}})_*
\omega_{\nicefrac{G}{P}}^{1-p} = \End_F(\nicefrac{G}{P})$.
It follows that we may extend (\ref{comp})
into a commutative diagram
\begin{equation}
\label{diag1}
\xymatrix{
(F_Z)_* \co_{Z} \ar[r]^{(F_Z)_* \sigma} \ar[d]^{(F_b)_* \hat \pi^*(u^!)  } & 
(F_Z)_* \cm_Z \ar[r]^{(F_b)_* v} \ar[d]^{(F_b)_* \hat \pi^*(u^!) }  &
(F_b)_* \cl_{\hat Z} \ar[r]^{(\pi')^*u}  \ar[d]_{(F_b)_* \hat \pi^*(u^!)} 
& \co_{Z'} \\
(F_Z)_* \pi^*\check \cl \ar[r]^{}  & 
(F_Z)_*( \cm_Z \otimes  \pi^*\check \cl \ar[r]^{}  
\ar[r]^{})   &
(F_b)_*  (\hat \pi^* \omega_{\nicefrac{G}{P}}^{1-p})  \ar[ur]^{}  
}
\end{equation}
where all the vertical maps are induced by 
multiplication by $\hat \pi^*(u^!)$. Likewise the 
lower horizontal maps are induced from the upper 
horizontal maps by multiplication with 
  $\hat \pi^*(u^!)$. The triangle on the right is
induced from (\ref{fact}) by pull-back to
$Z'$.

\begin{thm}
\label{thm11}
Let $X$ denote a $P$-variety and $\cm$ denote a
$P$-linearized line bundle on $X$. Let $\mathcal L$ 
denote the equivariant line bundle on $\nicefrac{G}{P}$ 
associated to the $P$-character $\lambda$. 
Then the $G$-equivariant map $\Phi_{\cm,\lambda} $,
defined above, satisfies
\begin{enumerate}
\item When $Y$ is a $P$-stable closed subvariety of $X$
then the restriction of $\Phi_{\cm,\lambda}$ to the subspace :
$${\rm End}_F^\cl (\nicefrac{G}{P})  \otimes
{\rm Ind}_P^G \big( {\rm End}^\cm _F(X,Y) \otimes k_\lambda \bigr)
\otimes  {\rm Ind}_P^G \big (\cm(X) \big)$$ 
maps to $ {\End}^{}_F(G \times_P X ,G \times_P Y)$. 
\item When $V$ denotes a closed subvariety of $\nicefrac{G}{P}$
then the restriction of $\Phi_{\cm,\lambda}$ to the subspace :
$${\rm End}_F^\cl (\nicefrac{G}{P},V)  \otimes
{\rm Ind}_P^G \big( {\rm End}^\cm _F(X) \otimes k_\lambda \bigr)
\otimes  {\rm Ind}_P^G \big (\cm(X) \big)$$
maps to $ {\End}^{}_F(G \times_P X,q^{-1} (V)\times _P X)$, where
$q : G \rightarrow \nicefrac{G}{P}$ denotes the quotient map.
\end{enumerate}
Moreover, let $u \in {\rm End}_F^\cl (\nicefrac{G}{P})$, $v \in 
{\rm Ind}_P^G \big( {\rm End}^\cm _F(X) \otimes k_\lambda \bigr)$
and $\sigma \in  {\rm Ind}_P^G \big (\cm(X) \big)$. Then 
the element $\Phi_{\cm,\lambda}(u \otimes v \otimes \sigma)$ 
factorizes both as 
$$ (F_Z)_* \co_Z \xrightarrow{(F_Z)_* \sigma} (F_Z)_* \cm_Z \xrightarrow{s_1}
\co_{Z'},$$ 
and as 
$$  (F_Z)_* \co_Z \xrightarrow{(F_Z)_* (\sigma \otimes \pi^* u^!)} (F_Z)_* (\cm_Z \otimes  \pi^*\check \cl )
\xrightarrow{s_2}
\co_{Z'},$$ 
where $s_1$ and $s_2$ satisfies
\begin{enumerate}[i)]
\item If $v$ is contained in ${\rm Ind}_P^G 
\big( {\rm End}^\cm _F(X,Y) \otimes k_\lambda \bigr)$
then $s_1$ and $s_2$ are compatible with $G \times_P Y$.
\item If $u$ is contained in ${\rm End}_F^\cl (\nicefrac{G}{P},V)$
then $s_1$ is compatible with $q^{-1} (V)\times _P X$.
\end{enumerate} 
\end{thm}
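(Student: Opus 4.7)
The plan is to derive Theorem \ref{thm11} from Theorem \ref{thm1} together with the observation that passage through the map in (\ref{e1}) preserves compatibility with subvarieties, and to use the commutative diagram (\ref{diag1}) to establish the two factorizations.

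For the statements (1) and (2), I note that $\Phi_{\cm,\lambda}$ is by construction the composition of $\Phi_{\cm,\lambda}^1 \otimes \mathrm{id}$ with the map (\ref{e1}), so it suffices to check that the map (\ref{e1}) preserves compatibility. This is a direct verification: given a closed subvariety $W \subseteq Z$, if $s : (F_Z)_* \cm_Z \to \co_{Z'}$ lies in $\End_F^{\cm_Z}(Z,W)$, meaning $s\bigl((F_Z)_*(\cm_Z \otimes \mathcal I_W)\bigr) \subset \mathcal I_{W'}$, then $s \circ (F_Z)_* \sigma$ lies in $\End_F(Z,W)$ since multiplication by $\sigma$ sends $\mathcal I_W$ into $\cm_Z \otimes \mathcal I_W$. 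Specializing $W = G \times_P Y$ and $W = q^{-1}(V) \times_P X$ respectively, the two statements follow from Theorem \ref{thm1}(1) and (2).

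For the two factorizations, the first is tautological: setting $s_1 = (\pi')^* u \circ (F_b)_* v = \Phi_{\cm,\lambda}^1(u \otimes v)$, the top row of diagram (\ref{diag1}) reads exactly as the first displayed factorization of $\Phi_{\cm,\lambda}(u \otimes v \otimes \sigma)$. The compatibility properties (i) and (ii) for $s_1$ are immediate from Theorem \ref{thm1}(1) and (2) applied to the image of $\Phi_{\cm,\lambda}^1$. For $s_2$, I would use the commutativity of (\ref{diag1}): define $s_2$ to be the composition along the bottom row from $(F_Z)_*(\cm_Z \otimes \pi^*\check\cl)$ through $(F_b)_*(\hat\pi^* \omega_{\nicefrac{G}{P}}^{1-p})$ down to $\co_{Z'}$ via the evaluation morphism, where the first bottom horizontal map is induced from $(F_b)_* v$ by multiplication with $\hat\pi^* u^!$ and the second arises from (\ref{fact}) pulled back to $Z'$. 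Then the commutativity of (\ref{diag1}) gives the desired factorization of $\Phi_{\cm,\lambda}(u \otimes v \otimes \sigma)$.

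The main obstacle is establishing compatibility of $s_2$ with $G \times_P Y$ under hypothesis (i). I would argue as follows: the map $(F_b)_* v$ lies in $\End_F^{\cl_{\hat Z}}(Z)_b$--viewed now on the source $(F_b)_*(F_f)_* \cm_Z$--and by Lemma \ref{lembase} automatically sends $(F_b)_*(\mathcal I_{\hat Z_Y} \otimes \cl_{\hat Z})$ into $\mathcal I_{(Z_Y)'}$; combined with Proposition \ref{propfiber} (applied to $\cm(-\lambda)$) this means the composite $s_1$ sends $(F_Z)_*(\cm_Z \otimes \mathcal I_{Z_Y})$ into $\mathcal I_{(Z_Y)'}$. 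Tensoring the entire diagram (\ref{diag1}) with $\hat\pi^* u^!$ (i.e.\ moving from the top to the bottom row) only multiplies the inputs and outputs by sections of $\pi^* \check\cl$ on source and target, and so preserves the ideal-sheaf containment; equivalently, the commutativity of (\ref{diag1}) shows that $s_2 \circ (F_Z)_*(\sigma \otimes \pi^* u^!)$ and $s_1 \circ (F_Z)_* \sigma$ agree, and a local computation (using that $\pi^*\check\cl$ is a line bundle and that $\mathcal I_{Z_Y}$ is $\co_Z$-locally principal at a generic point of $Z_Y$) promotes this agreement on $\mathcal I_{Z_Y}$-twists into the desired compatibility of $s_2$.
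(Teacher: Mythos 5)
Your handling of statements (1), (2), the two factorizations, and the compatibility of $s_1$ follows the same route as the paper (Theorem \ref{thm1}, the definition of $\Phi_{\cm,\lambda}$, and the diagram (\ref{diag1})), and these parts are fine. The issue is with your argument for the compatibility of $s_2$.

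The sentence beginning ``equivalently, the commutativity of (\ref{diag1}) shows that $s_2 \circ (F_Z)_*(\sigma \otimes \pi^* u^!)$ and $s_1 \circ (F_Z)_* \sigma$ agree\ldots'' does not give a valid proof and is not equivalent to the first half of the sentence. Knowing that $s_2 \circ (F_Z)_*(\sigma\otimes\pi^* u^!) = s_1\circ (F_Z)_*\sigma$, together with the compatibility of $s_1$, only controls $s_2$ on the image of multiplication by $\sigma\otimes\pi^* u^!$. This image is not all of $(F_Z)_*(\cm_Z\otimes\pi^*\check\cl)$ whenever the section $\pi^*u^!$ vanishes somewhere, and in particular if its zero locus contains an irreducible component of $Z_Y$ the proposed ``local computation'' (density of the locus where $u^!$ is invertible) breaks down; at that extreme $s_2\circ(F_Z)_*(\sigma\otimes\pi^*u^!)$ could be compatible while $s_2$ itself is not. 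So this alternative argument must be discarded.

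The first half of that sentence contains the right idea, but it is imprecisely phrased in a way that obscures exactly the point at issue: the lower horizontal maps in (\ref{diag1}) are \emph{not} obtained from the upper ones by ``multiplication by sections''. They are obtained by \emph{tensoring} with the pull-back line bundle $\hat\pi^*\check\cl$ (i.e.\ tensoring with the identity map of $\hat\pi^*\check\cl$, using the projection formula $(F_f)_*(\cm_Z\otimes\pi^*\check\cl)\simeq (F_f)_*\cm_Z\otimes\hat\pi^*\check\cl$); multiplication by $\hat\pi^*u^!$ appears only in the \emph{vertical} arrows. Tensoring by a line bundle is exact and hence preserves ideal-sheaf containment, whereas multiplication by a section of a line bundle in general does not. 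The correct argument, which is what the paper's citation of Theorem \ref{thm1} and Lemma \ref{lembase} is gesturing at, is: (a) by Proposition \ref{propfiber}, the hypothesis $v\in{\rm Ind}_P^G({\rm End}^\cm_F(X,Y)\otimes k_\lambda)$ says that $(F_f)_*v$ sends $(F_f)_*(\cm_Z\otimes\mathcal I_{Z_Y})$ into $\mathcal I_{\hat Z_Y}\otimes\cl_{\hat Z}$; (b) tensoring this map with $\hat\pi^*\check\cl$ gives the lower middle map in (\ref{diag1}), which therefore sends $(F_f)_*(\cm_Z\otimes\pi^*\check\cl\otimes\mathcal I_{Z_Y})$ into $\mathcal I_{\hat Z_Y}\otimes\hat\pi^*\omega^{1-p}_{\nicefrac{G}{P}}$; (c) the lower right arrow is the pull-back of the evaluation map $\End_F(\nicefrac{G}{P})\to\co_{(\nicefrac{G}{P})'}$ along $\pi'$, and the same local calculation as in the proof of Lemma \ref{lembase} (which only uses that the relevant line bundle is pulled back from $\nicefrac{G}{P}$, so it applies with $\hat\pi^*\omega^{1-p}$ in place of $\cl_{\hat Z}$) shows it sends $(F_b)_*(\mathcal I_{\hat Z_Y}\otimes\hat\pi^*\omega^{1-p})$ into $\mathcal I_{(Z_Y)'}$. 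Composing (b) and (c) gives the compatibility of $s_2$. I would replace the ``equivalently\ldots'' clause with this chain of reasoning, and make the tensor-vs-multiply distinction explicit.
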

\begin{proof}
Part (1) and (2) follows directly from Theorem \ref{thm1} and
the definition of $\Phi_{\cm,\lambda}$. The existence of $s_1$ and
$s_2$ follows by the diagram (\ref{diag1}). Finally the claims
about the compatibility of $s_1$ and $s_2$ follows from Theorem
\ref{thm1} and Lemma \ref{lembase}.
\end{proof}

\subsection{}
\label{fsplitconclusion}

We will now describe when an element in the image 
of $\Phi_{\cm, \lambda}$ defines a Frobenius splitting 
of $Z$. For this we consider the composed map
${\rm ev}_Z \circ \Phi_{\cm, \lambda}$. Recall that 
an element $s \in \End_F(Z)$ is a Frobenius 
splitting of $Z$ if and only if ${\rm ev}_Z(s)$
is the constant function $1$ on $Z'$. 

Let $u \in {\rm End}_F^\cl (\nicefrac{G}{P})$, 
$v \in  {\rm Ind}_P^G \big( {\rm End}^\cm _F(X) 
\otimes k_\lambda \bigr)$ and $\sigma \in
{\rm Ind}_P^G \big (\cm(X) \big) $. By Equation
(\ref{comp}) the image of $u \otimes v \otimes 
\sigma$ under ${\rm ev}_Z \circ \Phi_{\cm, \lambda}$
coincides with the global section of $\co_{Z'}$
determined by the composed map
\begin{equation}
\label{compev}
\co_{Z'} \xrightarrow{F_Z^\sharp} 
(F_Z)_* \co_{Z} \xrightarrow{(F_Z)_* \sigma}
(F_Z)_* \cm_Z \xrightarrow{(F_b)_* v} (F_b)_* \cl_{\hat Z}
\xrightarrow{(\pi')^*u} \co_{Z'}.
\end{equation}
We may divide this composition into two parts. 
The first part
\begin{equation}
\notag
\co_{Z'} \xrightarrow{F_Z^\sharp} 
(F_Z)_* \co_{Z} \xrightarrow{(F_Z)_* \sigma}
(F_Z)_* \cm_Z \xrightarrow{(F_b)_* v} (F_b)_* 
\cl_{\hat Z}
\end{equation}
is defined by $\sigma$ and $v$ and    
defines a global section of $\cl_{\hat Z}$. 
The corresponding map 
$$ \Phi_{\cm , \lambda}^2  :   {\rm Ind}_P^G \big( {\rm End}^\cm _F(X) 
\otimes k_\lambda \bigr) \otimes 
{\rm Ind}_P^G \big (\cm(X) \big) 
\rightarrow  {\rm Ind}_P^G\big(k[X'] \otimes k_\lambda\big),$$
is the map induced by the morphism
\begin{equation}
\label{map1} 
{\rm End}^\cm _F(X) 
\otimes \cm(X) \rightarrow k[X'],
\end{equation}
mapping $s : (F_X)_* \cm \rightarrow \co_{X'}$ 
and $\tau$ a global section
of $\cm$, to $ s(\tau)$. Notice
that we here identify 
${\rm Ind}_P^G \big( k[X'] \otimes k_\lambda \big) $
with the space of global sections of $\cl_{\hat Z}$
(cf. Equation (\ref{Ind})). The second part takes 
a global section $\tilde \tau$  of $\cl_{\hat Z}$
and an element $u$ in ${\rm End}_F^\cl (\nicefrac{G}{P}) $
to the global section of $\co_{Z'}$ defined by 
$$ \co_{Z'}  \xrightarrow{F_b^\sharp} 
(F_b)_* \co_{\hat Z} \xrightarrow{(F_b)_* \tilde \tau}
(F_b)_* \cl_{\hat Z}
\xrightarrow{(\pi')^*u} \co_{Z'}.$$
The corresponding  map is  
$$ \Phi_\lambda : \End_F^\cl 
(\nicefrac{G}{P})  \otimes {\rm Ind}_P^G 
\big( k[X'] \otimes k_\lambda \big)  \rightarrow 
k[Z'],$$
which maps $u \otimes \tilde \tau$, to 
$((\pi')^* u) (\tilde \tau)$ (cf. Proposition 
\ref{propbase}). The restriction of $\Phi_\lambda$ :
\begin{equation}
\label{phi1}
\phi_\lambda : \End_F^\cl (\nicefrac{G}{P})  \otimes 
{\rm Ind}_P^G \big(  k_\lambda \big) \rightarrow
k,
\end{equation}
is the map corresponding to $\Phi_\lambda$ 
in case $X$ is the one point space ${\rm Spec}(k)$ (in
which case $k[X']$ is just $k$). In combination 
this defines us a commutative diagram
\begin{equation}
\label{comm}
\xymatrix{
{\rm End}_F^\cl (\nicefrac{G}{P})  \otimes
{\rm Ind}_P^G \big( {\rm End}^\cm _F(X) \otimes k_\lambda \bigr)
\otimes  {\rm Ind}_P^G \big (\cm(X) \big)
\ar[d]^{{\rm Id} \otimes \Phi_{\cm,\lambda}^{2}}  \ar[r]^(0.8){\Phi_{\cm , \lambda}}
&  \End_F(Z)
\ar[d]_{{\rm ev}_Z} \\
 \End_F^\cl (\nicefrac{G}{P})  \otimes 
{\rm Ind}_P^G \big( k[X'] \otimes k_\lambda \big) 
\ar[r]^(0.65){\Phi_\lambda}  & k[Z'] \\
 \End_F^\cl (\nicefrac{G}{P})  \otimes 
{\rm Ind}_P^G \big(k_\lambda \big) \ar[d]_{m_\lambda} \ar@{^{(}->}[u] \ar[r]^(0.75){\phi_\lambda} 
& k \ar@{^{(}->}[u] \\
{\rm End}_F\big(\nicefrac{G}{P}\big) \ar[ur]_{{\rm ev}_{\nicefrac{G}{P}}} & 
}
\end{equation}
where $m_\lambda$ is the natural map which
makes the lower part of the diagram commutative. 
Notice that when $k[X']=k$, e.g. if $X'$ is a complete 
and irreducible variety, then $\phi_\lambda$ 
and $\Phi_\lambda$ coincides. Let $\chi$ denote the
$P$-character associated to the canonical 
$G$-linearization of $\omega_{\nicefrac{G}{P}}^{-1}$
(cf. Section \ref{lin-dual}). Then as noted 
earlier (Section \ref{Phi}) the $G$-module
$\End_F^\cl (\nicefrac{G}{P})$ coincides 
with the space of global sections of 
$\check{\cl} = \omega_{\nicefrac{G}{P}}^{1-p}
\otimes \cl^{-1}$ and thus coincides with 
\begin{equation}
\label{chi}
 \End_F^\cl (\nicefrac{G}{P})
= {\rm Ind}_P^G \big( (p-1)\chi - \lambda\big),
\end{equation}
where we abuse notation and write $(p-1) \c - \lambda$ 
for the 1-dimensional  $P$-representation associated 
with the character  $(p-1) \c - \lambda$.
It follows that $m_\lambda$ is the
natural multiplication map
\begin{equation} 
\label{mult}
m_\lambda : {\rm Ind}_P^G \big(  (p-1)\chi - \lambda\big)
\otimes {\rm Ind}_P^G \big(\lambda\big)
\rightarrow  {\rm Ind}_P^G \big(  (p-1)\chi \big).
\end{equation}
which is surjective if the domain is nonzero,
i.e. if $\cl$ and $\omega_{\nicefrac{G}{P}}^{1-p} \otimes 
\cl^{-1}$ are effective line bundles on $\nicefrac{G}{P}$
\cite[Thm.3]{RamRam}. 

The commutativity of the
diagram (\ref{comm}) then implies:

\begin{prop}
\label{testFsplit}
Let $\Xi$ denote an element in the domain of 
$\Phi_{\cm,\lambda}$, and assume that the image $
({\rm Id} \otimes \Phi_{\cm,\lambda}^{2})(\Xi)$ is 
contained in the subspace $\End_F^\cl (\nicefrac{G}{P})  
\otimes {\rm Ind}_P^G \big(  k_\lambda \big)$ (cf.
diagram (\ref{comm})). Then 
$\Phi_{\cm,\lambda}(\Xi)$ is a Frobenius splitting of 
$Z$ if and only if $\phi_\lambda(({\rm Id} \otimes 
\Phi_{\cm,\lambda}^{2})(\Xi))$ equals the constant
$1$. In particular, if 
 $\End_F^\cl (\nicefrac{G}{P})  
\otimes {\rm Ind}_P^G \big(  k_\lambda \big)$
is nonzero and ${\rm Ind}_P^G 
\big(  k_\lambda \big)$ is contained in 
the image of $\Phi_{\cm,\lambda}^{2}$, 
then $Z$ admits a Frobenius splitting. 
\end{prop}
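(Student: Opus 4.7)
The plan is to trace the commutative diagram~(\ref{comm}) and use the characterization that $s\in\End_F(Z)$ is a Frobenius splitting if and only if ${\rm ev}_Z(s)=1\in k[Z']$. The outer square of~(\ref{comm}) gives
$${\rm ev}_Z\bigl(\Phi_{\cm,\lambda}(\Xi)\bigr)=\Phi_\lambda\bigl(({\rm Id}\otimes\Phi_{\cm,\lambda}^{2})(\Xi)\bigr)$$
for every $\Xi$ in the domain, and the middle triangle identifies the restriction of $\Phi_\lambda$ to the subspace $\End_F^\cl(\nicefrac{G}{P})\otimes{\rm Ind}_P^G(k_\lambda)$ with $\phi_\lambda$, taking values in the subspace $k\subset k[Z']$ of constants. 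Under the subspace hypothesis on $({\rm Id}\otimes\Phi_{\cm,\lambda}^{2})(\Xi)$ this immediately yields the asserted equivalence.

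For the ``in particular'' statement I would produce such a $\Xi$ explicitly. By the lower triangle of~(\ref{comm}), $\phi_\lambda={\rm ev}_{\nicefrac{G}{P}}\circ m_\lambda$. The nonvanishing hypothesis forces both tensor factors of $m_\lambda$ to be nonzero, so by~\cite[Thm.3]{RamRam} the multiplication map $m_\lambda$ surjects onto $\End_F(\nicefrac{G}{P})={\rm Ind}_P^G((p-1)\chi)$. The flag variety $\nicefrac{G}{P}$ carries a canonical Frobenius splitting, so ${\rm ev}_{\nicefrac{G}{P}}$ hits $1\in k$. Hence $\phi_\lambda$ surjects onto $k$, and one can find an element $\xi\in\End_F^\cl(\nicefrac{G}{P})\otimes{\rm Ind}_P^G(k_\lambda)$ with $\phi_\lambda(\xi)=1$.

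To finish, write $\xi=\sum_i u_i\otimes\alpha_i$ with $u_i\in\End_F^\cl(\nicefrac{G}{P})$ and $\alpha_i\in{\rm Ind}_P^G(k_\lambda)$. The second hypothesis provides preimages $\beta_i$ under $\Phi_{\cm,\lambda}^{2}$ of each $\alpha_i$; set $\Xi=\sum_i u_i\otimes\beta_i$. Then $({\rm Id}\otimes\Phi_{\cm,\lambda}^{2})(\Xi)=\xi$ lies in the required subspace and is sent to $1$ by $\phi_\lambda$, so by the first part $\Phi_{\cm,\lambda}(\Xi)$ is a Frobenius splitting of $Z$.

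The only ingredient beyond diagram chasing is the surjectivity of $\phi_\lambda$, which rests on two well-known facts: the Ramanan--Ramanathan surjectivity of multiplication maps of induced modules on $\nicefrac{G}{P}$, and the Frobenius splittability of $\nicefrac{G}{P}$. I would flag the former as the main conceptual point, since it is what converts the bilinear pairing $m_\lambda$ into something whose evaluation can be controlled.
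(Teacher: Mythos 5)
Your proof is correct and follows the same approach as the paper: the first equivalence is read off from the commutativity of diagram~(\ref{comm}), and the ``in particular'' clause is obtained from the Ramanan--Ramanathan surjectivity of $m_\lambda$ together with the existence of a Frobenius splitting of $\nicefrac{G}{P}$. The paper's proof is terse on the final step; your explicit lifting of $\xi=\sum_i u_i\otimes\alpha_i$ to $\Xi=\sum_i u_i\otimes\beta_i$ is exactly the intended argument, spelled out.
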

\begin{proof}
The first part of the proof is just a 
restatement of the fact that the diagram
(\ref{comm}) is 
commutative. The second part follows 
by the surjectivity of $m_\lambda$ and 
the fact that $\nicefrac{G}{P}$ admits 
a Frobenius splitting.

\end{proof}

\begin{cor}
\label{complete}
Assume that $X$ is irreducible and complete.
If both ${\rm Ind}_P^G \big( \lambda \big)$ 
and  ${\rm Ind}_P^G \big( (p-1)\chi - \lambda \big)$ 
are nonzero and $\Phi_{\cm,\lambda}^2$ is surjective, 
then $Z$ admits a Frobenius splitting. 
\end{cor}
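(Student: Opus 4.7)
The plan is to derive this as a direct consequence of Proposition \ref{testFsplit}. That proposition tells us that $Z$ admits a Frobenius splitting as soon as two conditions hold: first, the tensor product $\End_F^\cl(\nicefrac{G}{P}) \otimes {\rm Ind}_P^G(k_\lambda)$ is nonzero, and second, the subspace ${\rm Ind}_P^G(k_\lambda)$ is contained in the image of $\Phi_{\cm,\lambda}^2$. So my task reduces to verifying these two conditions under the present hypotheses.

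For the first condition, I would invoke the identification (\ref{chi}), which gives $\End_F^\cl(\nicefrac{G}{P}) = {\rm Ind}_P^G((p-1)\chi - \lambda)$. Combined with the hypothesis that both ${\rm Ind}_P^G(\lambda)$ and ${\rm Ind}_P^G((p-1)\chi - \lambda)$ are nonzero, this immediately shows the relevant tensor product is nonzero.

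For the second condition, the key observation is that since $X$ is irreducible and complete, the global regular functions on $X'$ reduce to the constants, so $k[X'] = k$. Consequently, the codomain of
\[
\Phi_{\cm,\lambda}^2 : {\rm Ind}_P^G\bigl({\rm End}_F^\cm(X) \otimes k_\lambda\bigr) \otimes {\rm Ind}_P^G(\cm(X)) \rightarrow {\rm Ind}_P^G\bigl(k[X'] \otimes k_\lambda\bigr),
\]
simplifies to ${\rm Ind}_P^G(k_\lambda)$. The assumed surjectivity of $\Phi_{\cm,\lambda}^2$ then directly yields that ${\rm Ind}_P^G(k_\lambda)$ lies in its image.

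Both hypotheses of Proposition \ref{testFsplit} being verified, the conclusion follows immediately. There is no real obstacle here; the only subtlety is to correctly identify the codomain of $\Phi_{\cm,\lambda}^2$ in the complete irreducible case, so that surjectivity really does give what is needed.
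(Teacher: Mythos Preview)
Your proposal is correct and follows exactly the paper's approach: the paper's own proof is simply ``Apply Proposition \ref{testFsplit},'' and you have spelled out precisely the two verifications needed to invoke the ``in particular'' clause of that proposition, using the identification (\ref{chi}) and the observation (already noted in the paper just before Proposition \ref{testFsplit}) that $k[X']=k$ when $X$ is irreducible and complete.
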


\subsection{}
\label{invar}
In many concrete situation the existence of a 
$P$-invariant element in ${\rm End}^\cm _F(X) 
\otimes k_\lambda$ is given. Notice that this
is equivalent to a $G$-invariant element $v$ 
in ${\rm Ind}_P^G \big( {\rm End}^\cm _F(X) 
\otimes k_\lambda \bigr)$ and thus 
$\Phi_{\cm,\lambda}$ defines a $G$-equivariant map 
\begin{equation}
\label{invarsect}
{\rm End}_F^\cl (\nicefrac{G}{P})  \otimes
{\rm Ind}_P^G \big (\cm(X) \big)  \rightarrow
{\End}_F(Z ),
\end{equation}
$$ u \otimes \sigma \mapsto 
\Phi_{\cm,\lambda} (u \otimes v \otimes \sigma).$$
Similarly $\Phi_{\cm,\lambda}^2$ defines a $G$-equivariant
morphism 
\begin{equation}
\label{invarsect1}
{\rm Ind}_P^G \big (\cm(X) \big) 
\rightarrow {\rm Ind}_P^G \big ( k[X'] \otimes 
k_\lambda \big),
\end{equation}
which makes the diagram 
\begin{equation}
\label{comm3}
\xymatrix{
{\rm End}_F^\cl (\nicefrac{G}{P})  
\otimes  {\rm Ind}_P^G \big (\cm(X) \big)
\ar[d]^{}  \ar[r]{}
&  \End_F(Z)
\ar[d]_{{\rm ev}_Z} \\
 \End_F^\cl (\nicefrac{G}{P})  \otimes 
{\rm Ind}_P^G \big( k[X'] \otimes k_\lambda \big) 
\ar[r]^(0.75){\Phi_\lambda}  & k[Z']
}
\end{equation}
commutative. We also note

\begin{cor}
\label{complete2}
Assume that $X$ is irreducible and complete
and let $v$ denote a $P$-invariant element of 
${\rm End}^\cm _F(X) \otimes k_\lambda$.
If the induced map 
$$ (\Phi_{\cm,\lambda}^2)_{| v \otimes   
{\rm Ind}_P^G  (\cm(X) )}
: {\rm Ind}_P^G \big( \cm(X) \big) 
\rightarrow  {\rm Ind}_P^G \big(k_\lambda \big), $$
is surjective then $Z$ admits a Frobenius 
splitting. In particular, if $
 {\rm Ind}_P^G \big(k_\lambda \big)$ is an 
irreducible $G$-representation then for
$Z$ to be Frobenius split it 
suffices that the latter map is nonzero.
\end{cor}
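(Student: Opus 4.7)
The plan is to reduce Corollary \ref{complete2} to Proposition \ref{testFsplit}. First I would use that $X$ is irreducible and complete, so $k[X'] = k$; in the commutative diagram (\ref{comm}) the target ${\rm Ind}_P^G(k[X'] \otimes k_\lambda)$ of $\Phi_{\cm,\lambda}^2$ then simplifies to ${\rm Ind}_P^G(k_\lambda)$, and $\Phi_\lambda$ coincides with $\phi_\lambda$. By Section \ref{invar}, the $P$-invariant $v$ corresponds to a $G$-invariant element of ${\rm Ind}_P^G({\rm End}^\cm_F(X) \otimes k_\lambda)$, and the restriction of $\Phi_{\cm,\lambda}^2$ to $v \otimes {\rm Ind}_P^G(\cm(X))$ is precisely the $G$-equivariant morphism (\ref{invarsect1}). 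Since this restricted map factors through $\Phi_{\cm,\lambda}^2$, its surjectivity immediately forces ${\rm Ind}_P^G(k_\lambda)$ to lie in the image of $\Phi_{\cm,\lambda}^2$.

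Next I would apply Proposition \ref{testFsplit}, which produces a Frobenius splitting of $Z$ as long as $\End_F^\cl(\nicefrac{G}{P}) \otimes {\rm Ind}_P^G(k_\lambda)$ is nonzero. The second tensor factor is nonzero because the given restricted map surjects onto it. The first factor equals ${\rm Ind}_P^G((p-1)\chi - \lambda)$ by (\ref{chi}), and its nonvanishing is the standing hypothesis underlying this whole setup; it also furnishes, together with \cite[Thm.3]{RamRam}, the surjectivity of the multiplication map $m_\lambda$ in (\ref{mult}) that drives the proposition. To produce an explicit splitting I would pick $f \in \End_F(\nicefrac{G}{P}) = {\rm Ind}_P^G((p-1)\chi)$ with ${\rm ev}_{\nicefrac{G}{P}}(f) = 1$, use the surjectivity of $m_\lambda$ to write $f = \sum_i m_\lambda(u_i \otimes \mu_i)$, lift each $\mu_i$ through the restricted map to some $\sigma_i \in {\rm Ind}_P^G(\cm(X))$, and conclude that $\Phi_{\cm,\lambda}\bigl(\sum_i u_i \otimes v \otimes \sigma_i\bigr)$ is a Frobenius splitting of $Z$ by the commutativity of (\ref{comm}).

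For the \emph{in particular} statement, if ${\rm Ind}_P^G(k_\lambda)$ is an irreducible $G$-representation, then the image of the $G$-equivariant map (\ref{invarsect1}) is a $G$-subrepresentation, hence either zero or all of ${\rm Ind}_P^G(k_\lambda)$; so any nonzero such map is automatically surjective and the first half of the corollary applies. The only delicate point in the argument — and hence the main obstacle when invoking this corollary in a concrete setting — is verifying the nonvanishing of $\End_F^\cl(\nicefrac{G}{P}) = {\rm Ind}_P^G((p-1)\chi - \lambda)$, which depends on the choice of $\lambda$ and on the geometry of $\nicefrac{G}{P}$.
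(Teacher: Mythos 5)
Your proof is correct and follows essentially the same route as the paper, which simply writes ``Apply Corollary \ref{complete}''; you have unpacked that citation by tracing it back through Proposition \ref{testFsplit}, the diagram (\ref{comm}), and the surjectivity of $m_\lambda$, and your reading of the \emph{in particular} statement via $G$-equivariance of (\ref{invarsect1}) is exactly the intended one. Your closing observation is well taken: the nonvanishing of $\End_F^{\cl}(\nicefrac{G}{P})={\rm Ind}_P^G\big((p-1)\chi-\lambda\big)$ is listed explicitly in Corollary \ref{complete} but is left tacit in the statement of Corollary \ref{complete2} (it is not a ``standing hypothesis'' anywhere in the paper; rather, the corollary is silently assuming it, and in the applications $\lambda=(1-p)\rho$, $\chi=-2\rho$ make it hold automatically since one gets the Steinberg module).
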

\begin{proof}
Apply Corollary \ref{complete}.
\end{proof}

\section{$B$-Canonical Frobenius splittings}
\label{canonical}

In this section we continue the study of the Frobenius
splitting properties of $Z=G \times_P X$. The notation 
is kept as in Section \ref{frobsplit} but we restrict
ourselves to the case where  $G$ is a connected, 
semisimple and simply connected linear algebraic group. 
Moreover, we fix $P=B$, $\cm  = \co_X$ and $\lambda = 
-(p-1)\rho$.  
Recall that, in this setup, the dualizing 
sheaf $\omega_{\nicefrac{G}{B}}$ is the 
$G$-linearized sheaf associated to the
$B$-character $2\rho$. Thus, 
with the notation in Section \ref{fsplitconclusion},
we have $\chi = - 2 \rho$.  Recall also the
$G$-equivariant identity (see (\ref{chi})) 
\begin{equation}
\label{iso}
{\rm End}_F^\cl (\nicefrac{G}{B})   \simeq 
{\rm Ind}_B^G ((p-1) \c - \lambda) =
{\rm Ind}_B^G (\lambda) = 
{\rm Ind}_B^G ((1-p)\rho).
\end{equation}
The latter $G$-module is called
the Steinberg module of $G$ and will be denoted
by ${\rm St}$. The Steinberg module  
is a simple and selfdual $G$-module.    
A {\it $B$-canonical Frobenius 
splitting} of $X$ is then a $B$-equivariant map
\begin{equation} 
\label{Bcan}
\theta : {\rm St} \otimes k_{(p-1)\rho} \rightarrow
{\rm End}_F(X),
\end{equation}
containing a Frobenius splitting in its image. 
Notice that a $B$-canonical Frobenius splitting
of $X$ is not a Frobenius splitting as defined 
in Section \ref{Fsplit}. However, there exists
a unique  nonzero lowest weight vector $v_-$ of 
${\rm St}$ such that $\theta(v_- )$ is a Frobenius
splitting in 
the sense of Section \ref{Fsplit}.
Moreover, as ${\rm St}$ is a simple $G$-module 
the map $\theta$ is uniquely determined 
by $\theta(v_- )$, and we may thus identify 
$\theta$ with $\theta(v_-)$.  In this 
way $\theta(v_-)$ will also be called a 
$B$-canonical Frobenius splitting of
$X$.

The importance of $B$-canonical Frobenius 
splittings was first observed by O. Mathieu 
in connection with good filtrations of $G$-modules. 
We refer to \cite[Chapter 4]{BK} for a general
reference on $B$-canonical Frobenius splittings.

\subsection{}
\label{61}
Consider a $B$-canonical Frobenius
splitting as in (\ref{Bcan}). By Frobenius 
reciprocity this defines a map
$$ {\rm St} \rightarrow
{\rm Ind}_B^G \big({\rm End}_F(X) \otimes k_\lambda
\big),$$
and as ${\rm Ind}_B^G\big(k[X] \big) $
contains $k$ we may consider the induced $G$-equivariant 
morphism
$$ \tilde \theta : {\rm St} \rightarrow 
{\rm Ind}_B^G \big({\rm End}_F(X) \otimes k_\lambda
\big) \otimes {\rm Ind}_B^G\big( k[X] \big).$$
Composing $\tilde \theta$ with the map 
$\Phi_{\cm, \lambda}^2$ of Section \ref{fsplitconclusion}
we end up with a map 
$$ \Phi_{\cm, \lambda}^2 \circ \tilde \theta :
{\rm St} \rightarrow {\rm Ind}_B^G \big(
k[X'] \otimes k_\lambda\big).$$
We claim
\begin{lem}
\label{image}
The composed map
$\Phi_{\cm, \lambda}^2 \circ \tilde \theta$
is an isomorphism on its image 
${\rm Ind}_B^G \big(\lambda\big)$.
\end{lem}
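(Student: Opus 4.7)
The plan is to show that $\Phi^2_{\cm,\lambda} \circ \tilde\theta : {\rm St} \to {\rm Ind}_B^G(k[X'] \otimes k_\lambda)$ is nonzero and factors through the $G$-submodule ${\rm Ind}_B^G(k_\lambda)$ induced by the inclusion of constants $k \hookrightarrow k[X']$. Since ${\rm Ind}_B^G(k_\lambda) = {\rm Ind}_B^G(\lambda)$ is isomorphic to the simple module ${\rm St}$ by (\ref{iso}), any nonzero $G$-morphism ${\rm St} \to {\rm Ind}_B^G(\lambda)$ is an isomorphism, so the image is exactly ${\rm Ind}_B^G(\lambda)$, as claimed.

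First I would unwind the composition via Frobenius reciprocity at $e \in G/B$. Tracing through the constructions of $\tilde\theta$ in \ref{61} and $\Phi^2_{\cm,\lambda}$ in Section \ref{fsplitconclusion}, the value at $e$ of the section $\Phi^2_{\cm,\lambda} \circ \tilde\theta (v)$ equals ${\rm ev}_X(\theta(v)) \otimes e_\lambda \in k[X'] \otimes k_\lambda$, where $e_\lambda$ is a chosen generator of $k_\lambda$ and $\theta(v)$ abbreviates $\theta$ applied to $v$ tensored with the dual generator of $k_{(p-1)\rho}$; the constant $1 \in {\rm Ind}_B^G(k[X])$ contributes trivially at $e$. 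The factorization through ${\rm Ind}_B^G(k_\lambda)$ therefore reduces to showing that the $B$-equivariant composition ${\rm ev}_X \circ \theta : {\rm St} \otimes k_{(p-1)\rho} \to k[X']$ has image in the constants $k \subset k[X']$.

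The crucial step combines the Frobenius splitting property with $B$-equivariance. By definition ${\rm ev}_X(\theta(v_-)) = 1 \in k$, which is $B$-invariant. The $B$-equivariance of ${\rm ev}_X \circ \theta$ then forces it to take the constant value $1$ on the entire $B$-orbit of $v_-$ in ${\rm St} \otimes k_{(p-1)\rho}$, and hence by $k$-linearity to take values in $k$ on the $k$-linear span of that orbit. The vector $v_-$ (viewed in ${\rm St} \otimes k_{(p-1)\rho}$) has total weight $\lambda + (p-1)\rho = 0$ and is therefore $T$-fixed, so its $B$-orbit coincides with its $U^+$-orbit. The $k$-span of $U^+ \cdot v_-$ in ${\rm St}$ coincides via divided powers with ${\rm Dist}(U^+) \cdot v_-$, and the latter equals all of ${\rm St}$ because the negative root vectors annihilate $v_-$ (as $\lambda$ is the lowest weight of ${\rm St}$) while $T$ preserves $k v_-$, so by irreducibility of ${\rm St}$ under ${\rm Dist}(G)$ we get ${\rm Dist}(U^+) \cdot v_- = {\rm St}$. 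Hence the $B$-orbit's span is all of ${\rm St} \otimes k_{(p-1)\rho}$ and the factorization is established.

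With the factorization in hand, the induced map ${\rm St} \to {\rm Ind}_B^G(\lambda)$ is nonzero, its value at $v_-$ being the $G$-equivariant section with value $1 \otimes e_\lambda$ at $e$; by simplicity of ${\rm St}$ it is therefore an isomorphism. The only delicate point is the spanning property of $U^+ \cdot v_-$ inside ${\rm St}$, a standard consequence of the irreducibility of the Steinberg module; the remainder of the argument is purely formal from Frobenius reciprocity and $B$-equivariance.
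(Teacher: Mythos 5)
Your proof is correct and follows essentially the same strategy as the paper: reduce via Frobenius reciprocity to showing that ${\rm ev}_X \circ \theta$ takes values in the constants $k \subset k[X']$, deduce this from $B$-equivariance together with the fact that ${\rm St}$ is generated as a $B$-module by its lowest weight vector, and then conclude by simplicity of ${\rm St}$ since the map is nonzero. The only difference is cosmetic: the paper phrases the first step as showing the image of $\theta$ lies in the $B$-submodule $\End_F(X)_c = {\rm ev}_X^{-1}(k)$ and cites the $B$-generation of ${\rm St}$ by its lowest weight space without proof, whereas you spell out the same generation statement explicitly via $T$-fixedness of $v_-$ in ${\rm St} \otimes k_{(p-1)\rho}$ and a ${\rm Dist}(U^+)$-argument.
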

\begin{proof}
We first prove that the image of $\Phi_{\cm, \lambda}^2 
\circ \tilde \theta$ is contained in ${\rm Ind}_B^G 
\big(\lambda\big)$. For this let $\End_F(X)_c$ denote the inverse image
of $k \subset k[X']$ under the evaluation map ${\rm ev}_X$.
It suffices to prove that 
the image of $\theta$ is contained in 
$\End_F(X)_c$. Notice that $\End_F(X)_c$ is a 
$B$-submodule of $\End_F(X)$ containing the 
set of Frobenius splittings of $X$.
In particular, the image 
of the lowest weight space of ${\rm St}$ 
under $\theta$ is contained in $\End_F(X)_c$. 
Moreover, as ${\rm St}$ is an irreducible 
$G$-module it is generated by the lowest 
weight space as a $B$-module. Thus,
the image of $\theta$ will be contained
in the $B$-module $\End_F(X)_c$.

Now $\Phi_{\cm, \lambda}^2 
\circ \tilde \theta$ is a map from ${\rm St}$ to 
${\rm Ind}_B^G( \lambda) = {\rm St}$. Thus, by 
Frobenius reciprocity, it suffices 
to prove that $\Phi_{\cm, \lambda}^2 \circ \tilde 
\theta$ is nonzero which is the case as $\theta$
contains a Frobenius splitting in its image.
\end{proof}

Using Lemma \ref{image} we can now combine 
the diagram (\ref{comm}) 
with the map $\Phi_{\cm, \lambda}^2 \circ \tilde 
\theta$ and obtain a commutative and $G$-equivariant
diagram
\begin{equation}
\label{comm4}
\xymatrix{
{\rm St}
\otimes  {\rm St}
\ar[d]_{\simeq}^{ {\rm Id} \otimes (\Phi_{\cm, \lambda}^2 
\circ \tilde \theta)}  \ar[r]^(0.4){\Theta} 
&  \End_F(G \times_B X)
\ar[r]^(0.65){{\rm ev}_Z}  & k[Z'] \\
 \End_F^\cl (\nicefrac{G}{B})  \otimes 
{\rm Ind}_B^G \big(k_\lambda \big) \ar[dr]^{m_\lambda}
\ar[rr]^(0.65){\phi_\lambda}  & &k \ar[u] \\
&{\rm End}_F\big(\nicefrac{G}{B}\big) \ar[ur]_{{\rm ev}_{\nicefrac{G}{B}}} && \\
}
\end{equation}
where $\Theta$ is the map induced by 
$\tilde \theta
$ and $\Phi_{\cm,\lambda}$. By Proposition \ref{testFsplit} 
it follows that $\Theta(\Xi)$, for $\Xi$ in ${\rm St}
\otimes  {\rm St}$, is a Frobenius splitting of $Z$
if and only if the image of $\Xi$ under $ \phi_\lambda $ and 
${\rm Id} \otimes (\Phi_{\cm, \lambda}^2 \circ \tilde \theta)$
equals $1$. The latter map from ${\rm St}
\otimes  {\rm St}$ to $k$ will be denoted by 
$\phi$. By construction $\phi$ is $G$-equivariant.
Moreover, $m_\lambda$ is surjective and 
${\rm ev}_{\nicefrac{G}{B}}$ is nonzero (as $\nicefrac{G}{B}$
admits a Frobenius splitting) and thus $\phi$ is
nonzero. As ${\rm St}$ is a simple $G$-module it 
follows that 
\begin{equation}
\label{form}
\phi : {\rm St} \otimes  {\rm St} \rightarrow k,
\end{equation}
defines a nondegenerate $G$-invariant
bilinear form on ${\rm St}$. By Frobenius reciprocity 
such a form is uniquely determined up to a nonzero
constant. In particular, this provides a very useful 
way to construct lots of Frobenius splittings of $Z$.

\begin{cor}
\label{B-canonical}
Let $\theta : {\rm St} \otimes
k_{(p-1)\rho} \rightarrow {\rm End}_F(X)$
denote a $B$-canonical Frobenius
splitting of $X$. Then the induced 
morphism (defined above)
$$\Theta : {\rm St} \otimes
 {\rm St} \rightarrow {\End}_F(G \times_B X),$$
satisfies the following 
\begin{enumerate}
\item The image $\Theta(\nu)$ of an element 
$\nu $ in $ {\rm St} \otimes  {\rm St}$ defines
a Frobenius splitting of $G \times_B X$ up to a nonzero
constant if and only if $\phi(\nu)$ is nonzero.
\item If the image of $\theta $ is
contained in ${\rm End}_F(X,Y)$ for a
$B$-stable closed subvariety $Y$ of $X$,
then the image of $\Theta$ is contained
in ${\End}_F(G \times_B X, G \times_B Y)$.
\item Let $v$ denote an element of
${\rm St}= \End_F^\cl(\nicefrac{G}{B})$
which is compatible with a closed 
subvariety $V$ of $\nicefrac{G}{B}$. 
For any element $v' \in {\rm St}$ 
we have
$$\Theta(v \otimes v')
\in {\End_F}(G \times_B X, q^{-1}(V) \times_B X),$$
with $q : G \rightarrow \nicefrac{G}{B}$
denoting the quotient map.
\item Any element of the form $\Theta(v \otimes v')$
factorizes as 
$$ (F_{Z} )_* \co_Z \xrightarrow{(F_Z)_* \pi^* v} (F_Z)_* \pi^* \cl
\xrightarrow{s} \co_{Z'}, $$
where $Z= G \times_B X$ and $\cl$ is the line bundle
on $\nicefrac{G}{B}$ associated to the $B$-character 
$(1-p)\rho$. Moreover, if the image of $\theta $ is
contained in ${\rm End}_F(X,Y)$ then $s$ is compatible
with $G \times_B Y$.
\end{enumerate}
\end{cor}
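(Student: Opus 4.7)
The plan is to derive each of the four claims directly from Theorem \ref{thm11}, the commutative diagram (\ref{comm4}), Lemma \ref{image}, and Proposition \ref{testFsplit}, noting that by construction $\Theta(v \otimes v') = \Phi_{\co_X,\lambda}(v \otimes \tilde\theta(v'))$, with $\tilde\theta(v')$ of the form $\tilde\theta_0(v') \otimes 1$ in ${\rm Ind}_B^G({\rm End}_F(X) \otimes k_\lambda) \otimes {\rm Ind}_B^G(k[X])$, where $\tilde\theta_0$ is the map from $\theta$ obtained by Frobenius reciprocity.

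For (1), Lemma \ref{image} implies that ${\rm Id} \otimes (\Phi_{\co_X,\lambda}^2 \circ \tilde\theta)$ carries ${\rm St} \otimes {\rm St}$ isomorphically into the subspace $\End_F^\cl(\nicefrac{G}{B}) \otimes {\rm Ind}_B^G(k_\lambda)$ occurring in Proposition \ref{testFsplit}; hence that proposition applies to every $\nu$, and combined with the commutativity of (\ref{comm4}) it yields ${\rm ev}_Z(\Theta(\nu)) = \phi(\nu)$. So $\Theta(\nu)$ is a Frobenius splitting iff $\phi(\nu) = 1$, and by linearity of $\Theta$ and $\phi$ this is equivalent, up to a nonzero scalar, to $\phi(\nu) \neq 0$.

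For (2) and (3) I would simply invoke the two parts of Theorem \ref{thm11}. If the image of $\theta$ lies in ${\rm End}_F(X,Y)$ then the image of $\tilde\theta$ lies in ${\rm Ind}_B^G({\rm End}_F(X,Y) \otimes k_\lambda) \otimes {\rm Ind}_B^G(k[X])$, so Theorem \ref{thm11}(1) gives (2). Similarly, if $v \in \End_F^\cl(\nicefrac{G}{B})$ is compatible with $V$, Theorem \ref{thm11}(2) gives (3).

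For (4) I would apply the $s_2$-factorization at the end of Theorem \ref{thm11}. The equalities $\chi = -2\rho$ and $\lambda = -(p-1)\rho$ imply that $\check\cl = \omega_{\nicefrac{G}{B}}^{1-p} \otimes \cl^{-1}$ has $B$-character $(1-p)\rho$, so $\check\cl \simeq \cl$; and since $\cm = \co_X$ we have $\cm_Z = \co_Z$, whence $\cm_Z \otimes \pi^* \check\cl \simeq \pi^*\cl$. Under the identification (\ref{iso}) the global section $u^! \in H^0(\nicefrac{G}{B},\check\cl)$ is precisely $v$, while the $\sigma$-factor coming from $\tilde\theta(v')$ is the constant $1$, so the $s_2$-factorization collapses to the one claimed. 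The compatibility of $s$ with $G \times_B Y$, when the image of $\theta$ lies in ${\rm End}_F(X,Y)$, is then exactly the $s_2$-compatibility asserted in the last clause of Theorem \ref{thm11}. The only piece of bookkeeping requiring care is the identification $\check\cl \simeq \cl$, which is forced by the specific choice $\lambda = -(p-1)\rho$.
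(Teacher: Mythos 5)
Your proof is correct and takes essentially the same route the paper does: the paper's own proof is a single sentence deferring to Theorem \ref{thm11} and the surrounding discussion (diagram (\ref{comm4}), Lemma \ref{image}, Proposition \ref{testFsplit}), and you have unpacked precisely that, including the character computation showing $\check\cl \simeq \cl$, which is the key bookkeeping that makes the $s_2$-factorization collapse to the form in part (4).
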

\begin{proof}
All statements follows directly from
Theorem \ref{thm11} and the considerations 
above.
\end{proof}

The first part (1) and (2) of the above result
is well known (see e.g.
\cite[Ex. 4.1.E(4)]{BK}). However,
the second part (3) and (4) 
seems to be new.

\subsection{$B$-canonical Frobenius splitting 
when $G$ is not semisimple}
\label{not ss} 
Although Corollary \ref{B-canonical} is only
stated for connected, semisimple
and simply connected groups it
also applies in other cases :
assume  that $G$ is a connected linear algebraic
group containing a connected semisimple
subgroup $H$ such that the induced map
$ \nicefrac{H}{H \cap B} \rightarrow
\nicefrac{G}{B}$ is an isomorphism.
E.g. this is satisfied for any
parabolic subgroup of a reductive connected
linear algebraic group. Let $q_{\rm sc}: H_{\rm sc}
\rightarrow H$ denote a simply connected
cover of $H$. Then $X$ admits an action
of the parabolic subgroup
$B_{\rm sc}:=q_{\rm sc}^{-1}(B \cap H)$ of $H_{\rm sc}$.
Furthermore, the natural morphism
$$ H_{\rm sc} \times_{B_{\rm sc}} X
\rightarrow G \times_B X, $$
is then an isomorphism. We then say that
$X$ admits a $B$-canonical Frobenius
splitting if $X$, as a $B_{\rm sc}$-variety,
admits a $B_{\rm sc}$-canonical Frobenius
splitting. In this case we may apply
 Corollary \ref{B-canonical} to obtain
Frobenius splitting properties of
$G \times_B X$.

\subsection{Restriction to Levi subgroups}
\label{sect levi}
Return to the situation where $G$ is connected, 
semisimple and simply connected. Let $J$ be a
subset of the set of simple roots $\Delta$ and
let $G_J$ denote the commutator subgroup of 
$L_J$. Then $G_J$ is a connected, semisimple 
and simply connected linear algebraic group 
with Borel subgroup $B_J = {G_J} \cap B$ and maximal 
torus $T_J = T \cap G_J$.
We let ${\rm St}_J$ denote the associated
Steinberg module. Notice that ${\rm St}_J =
{\rm Ind}_{B_J}^{G_J}({(1-p)\rho_J})$ where
$\rho_J$ denotes the restriction of $\rho$
to $B_J$. The following should be well
known but we do not know a good reference.

\begin{lem}
\label{can restriction}
There exists a $G_J$-equivariant morphism
$$ {\rm St}_J \rightarrow {\rm St},$$
such that the $B^-_J$-invariant line of
$ {\rm St}_J $ maps surjectively to the
$B^-$-invariant line of ${\rm St}$.
In particular, if $X$ is a $G$-variety
admitting a $B$-canonical Frobenius
splitting then $X$ admits a $B_J$-canonical
Frobenius splitting as a $G_J$-variety.
\end{lem}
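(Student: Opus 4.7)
The plan is to construct the desired $G_J$-equivariant map as the transpose of a natural restriction map, using that Steinberg modules are self-dual. I would start by realizing ${\rm St} = {\rm H}^0(G/B, \mathcal L)$ with $\mathcal L$ the $G$-linearized line bundle on $G/B$ associated to the $B$-character $(1-p)\rho$ (cf.\ (\ref{iso})), and ${\rm St}_J = {\rm H}^0(G_J/B_J, \mathcal L_J)$ analogously. Since $T \subset G_J$ one has $P_J = G_J B$, so the natural map $G_J/B_J \to G/B$ is a closed immersion onto the Schubert subvariety $P_J/B$. A short weight calculation --- observing that $\sum_{\alpha \in R^+\setminus R_J^+} \alpha|_{T_J}$ is $W_J$-invariant in $X^*(T_J)$ and therefore vanishes by semisimplicity of $G_J$ --- shows $\rho|_{T_J}=\rho_J$, so $\mathcal L$ pulls back to $\mathcal L_J$. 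Restriction of global sections then produces a $G_J$-equivariant surjection $r : {\rm St} \to {\rm St}_J$, the surjectivity being a standard consequence of compatible Frobenius splitting of Schubert varieties in $G/B$. Using the distinguished nondegenerate invariant pairings $\langle -,- \rangle_G$ on ${\rm St}$ and $\langle -,- \rangle_J$ on ${\rm St}_J$ (implementing the self-dualities), I would define $j : {\rm St}_J \to {\rm St}$ as the adjoint of $r$, i.e.\ by $\langle j(x), y \rangle_G = \langle x, r(y) \rangle_J$. Then $j$ is $G_J$-equivariant, and injective because $r$ is surjective.

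Next I would verify the claim about the lowest-weight line. Let $v^-$ and $v_J^-$ span the $B^-$-invariant line of ${\rm St}$ and the $B_J^-$-invariant line of ${\rm St}_J$, of $T$-weight $-(p-1)\rho$ and $T_J$-weight $-(p-1)\rho_J$ respectively; by the weight identity above, $v^-$ also has $T_J$-weight $-(p-1)\rho_J$. Weight orthogonality of $\langle -,- \rangle_J$ implies that the functional $y \mapsto \langle v_J^-, r(y) \rangle_J$ on ${\rm St}$ sees only the highest weight component of $r(y)$ in ${\rm St}_J$; this component in turn equals a nonzero multiple of the value of the section $y$ at the $B$-fixed point of $G/B$, hence a nonzero multiple of the $v^+$-coordinate of $y$. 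The same is true for $y \mapsto \langle v^-, y \rangle_G$. Since the two functionals on ${\rm St}$ are proportional, $j(v_J^-)$ must be a nonzero scalar multiple of $v^-$, and the $B_J^-$-invariant line of ${\rm St}_J$ maps surjectively onto the $B^-$-invariant line of ${\rm St}$.

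Finally, for the second part, let $\theta : {\rm St} \otimes k_{(p-1)\rho} \to \End_F(X)$ denote the given $B$-canonical Frobenius splitting. The restriction of $k_{(p-1)\rho}$ from $B$ to $B_J$ equals $k_{(p-1)\rho_J}$ by the identity $\rho|_{T_J}=\rho_J$. Consequently $\tilde\theta := \theta \circ (j \otimes {\rm id})$ is a well-defined $B_J$-equivariant map ${\rm St}_J \otimes k_{(p-1)\rho_J} \to \End_F(X)$, and by the first part $\tilde\theta(v_J^- \otimes 1)$ is a nonzero scalar multiple of the Frobenius splitting $\theta(v^- \otimes 1)$. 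Hence $\tilde\theta$ is a $B_J$-canonical Frobenius splitting of $X$ viewed as a $G_J$-variety. The main obstacle will be the weight-tracking argument in the middle paragraph; once self-duality and the identity $\rho|_{T_J}=\rho_J$ are in place, the remaining steps are routine.
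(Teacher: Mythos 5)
Your proposal is correct, and its overall strategy --- construct a $G_J$-equivariant map ${\rm St}\to{\rm St}_J$ that behaves correctly on highest-weight data and then transpose via the self-dualities --- coincides with the paper's. Where you differ is in how that intermediate map is built, and in where the weight bookkeeping is carried out. The paper works purely module-theoretically: it takes $M$, the $T$-stable complement of the highest-weight line in ${\rm St}$, observes that $M$ is $B^-$- (hence $B_J^-$-) stable, so $\dot w_0^J M$ is $B_J$-stable, and the one-dimensional $B_J$-quotient ${\rm St}/\dot w_0^J M\simeq k_{(1-p)\rho_J}$ yields by Frobenius reciprocity a $G_J$-map ${\rm St}\to{\rm St}_J$ sending the $B$-line onto the $B_J$-line; the statement about the $B^-$-lines then falls out because transposition exchanges the $B$- and $B^-$-lines. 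Your route realizes the same map geometrically as restriction of sections of $\mathcal L_{(1-p)\rho}$ along $P_J/B\hookrightarrow G/B$; this requires the extra inputs you list ($\rho|_{T_J}=\rho_J$, identification of $\mathcal L|_{P_J/B}$ with $\mathcal L_J$, nonvanishing of $v^+$ on $P_J/B$ --- the surjectivity of $r$ is actually not needed, since injectivity of $j$ is automatic from simplicity of ${\rm St}_J$), but is concrete and entirely valid.

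The one place your middle paragraph is compressed is the passage from ``the highest-weight component of $r(y)$ in ${\rm St}_J$'' to ``the value of $y$ at the $B$-fixed point, hence the $v^+$-coordinate of $y$.'' What makes this work is that $r$ is $T$-equivariant (not merely $T_J$-equivariant), being $P_J$-equivariant. This is needed to rule out contributions from $T$-weights $\mu\neq(p-1)\rho$ of ${\rm St}$ with $\mu|_{T_J}=(p-1)\rho_J$: the unique $T_J$-weight-$(p-1)\rho_J$ line of ${\rm St}_J$ has $T$-weight exactly $(p-1)\rho$, so $r$ kills every other such $\mu$-weight space. Evaluation at $eB$ is precisely the $T$-equivariant functional picking out the $(p-1)\rho$-component, so your identification is correct --- but this should be stated. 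The paper avoids this bookkeeping by tracking highest-weight lines on ${\rm St}\to{\rm St}_J$ and letting compatibility of transposition with $B/B^-$-lines do the rest.
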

\begin{proof}
Let $M$ denote the $T$-stable complement to
the $B$-stable line in ${\rm St}$. Then $M$
is $B^-$-invariant and thus also
$B_J^-$-invariant. The translate $\dot w_0^J M$ 
is then invariant under $B_J$ and we obtain a
$B_J$-equivariant morphism
$$ {\rm St} \rightarrow {\rm St}/(\dot w_0^J M)
\simeq k_{(1-p)\rho_J}.$$
By Frobenius reciprocity this defines
a $G_J$-equivariant map ${\rm St}
\rightarrow {\rm St}_J$ such that the
$B$-stable line of ${\rm St}$ maps
onto the $B_J$-stable line of ${\rm St}_J$.
Now apply the selfduality of ${\rm St}_J$
and ${\rm St}$ to obtain the desired map.
This proves the first part of the statement.

The second part follows easily by composing
the obtained morphism $ {\rm St}_J \rightarrow {\rm St}$
with the $B$-canonical Frobenius splitting
$${\rm St}
\rightarrow {\rm End}_F(X) \otimes
k_{(1-p)\rho}, $$
of $X$ and noticing that the restriction
of $\rho$ to $B_J$ is $\rho_J$.

\end{proof}

\section{Applications to $G \times G$-varieties}

In this section we consider a linear algebraic group
$G$ satisfying the conditions of Section 
\ref{not ss}, i.e. we assume that $G$ contains
a closed connected semisimple subgroup $H$ such that 
$ \nicefrac{H} {H \cap B} \rightarrow \nicefrac{G}{B}$
is an isomorphism. We also let $H_{\rm sc}$ denote 
the simply connected version of $H$ and let
$B_{\rm sc}$ denote the associated Borel subgroup.

\subsection{A well known result}
\label{well known}

Consider for a moment (i.e. in this subsection) the
case where $G$ is semisimple and simply connected. 
Remember that the $G$-linearized 
line bundle on $\nicefrac{G}{B}$
associated to the $B$-character $2\rho$
coincides 
with the dualizing sheaf $\omega_{\nicefrac{G}{B}}$.
 Let $\cl$ denote
the line bundle on $\nicefrac{G}{B}$ associated to 
the $B$-character $(1-p)\rho$ and recall from 
Section \ref{canonical} the notation ${\rm St}=
{\rm Ind}_B^G ( (1-p)\rho)$ for the Steinberg module. 
As the Steinberg module is a selfdual $G$-module 
we may fix a $G$-invariant nonzero 
element $v_\Delta$ in the tensorproduct 
${\rm St} \otimes {\rm St}$. We may think of 
$v_\Delta$ as a global section of the line bundle
$\cl \boxtimes \cl$ on $(\nicefrac{G}{B})^2 =\nicefrac{G}{B}
\times \nicefrac{G}{B}$.

Identify $\nicefrac{G}{B}\times \nicefrac{G}{B}$ 
with $G \times_B \nicefrac{G}{B}$ by the isomorphism
$$ G \times_B \nicefrac{G}{B} \rightarrow 
\nicefrac{G}{B}\times \nicefrac{G}{B},$$
$$ [g, hB] \mapsto (gB, ghB), $$
and let $D$ denote the subvariety
of $\nicefrac{G}{B} \times \nicefrac{G}{B}$ 
corresponding to $G \times_B \partial (\nicefrac{G}{B})$, 
where $\partial (\nicefrac{G}{B})$ denotes the union of the 
codimension 1 Schubert varieties in $\nicefrac{G}{B}$.
Then, by \cite[proof of Thm.2.3.8]{BK}, the zero scheme 
of $v_\Delta$ equals $(p-1)D$. 
Consider then the natural morphism :
$$ \eta : (\cl \boxtimes \cl) \otimes (\cl \boxtimes \cl)
\rightarrow \omega_{(\nicefrac{G}{B})^2}^{1-p} =
\mathcal End_F^!((\nicefrac{G}{B})^2)$$
and define  
$$ \eta_{D} : (\cl \boxtimes \cl) \rightarrow 
\mathcal End_F^!((\nicefrac{G}{B})^2),$$
as in Lemma \ref{duality2}, using the identification
$ \cl \boxtimes \cl = \co_{(\nicefrac{G}{B})^2}\big( (p-1)D \big)$.
Then by Lemma \ref{duality2} the image of $\eta_{D}$
is contained in $\mathcal End_F^!((\nicefrac{G}{B})^2,D)$ and 
thus the associated element 
$$ \eta_{D}' \in \Hom_{\co_{((\nicefrac{G}{B})^2)'}} \big(  
(F_{(\nicefrac{G}{B})^2})_* (\cl \boxtimes \cl), 
\co_{((\nicefrac{G}{B})^2)'} \big),$$ 
is compatible with $D$. It follows

\begin{lem}
\label{diag}
The element in 
$$ \End_F^{\cl \boxtimes \cl} ((\nicefrac{G}{B})^2) 
  \simeq {\rm St} \boxtimes  {\rm St} $$
defined by $v_\Delta$ is compatible with 
the diagonal ${\rm diag}(\nicefrac{G}{B})$ in
$\nicefrac{G}{B} \times
\nicefrac{G}{B}$.  
\end{lem}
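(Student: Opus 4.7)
The plan is to use Lemma \ref{duality1} to rephrase the compatibility, transport the problem to $G\times_B\nicefrac{G}{B}$ via $\phi$, and then reduce it by the induction equivalence of Section \ref{Equivalence} to a compatibility statement on $\nicefrac{G}{B}$ itself.

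First, via Lemma \ref{duality1} together with the isomorphism $\omega_{(\nicefrac{G}{B})^2}^{1-p}\simeq(\cl\boxtimes\cl)^{\otimes 2}$ underlying $\eta$, the identification $\End_F^{\cl\boxtimes\cl}((\nicefrac{G}{B})^2)\simeq\mathrm{St}\boxtimes\mathrm{St}$ carries $v_\Delta$ to precisely the element $\eta_D'$ constructed in the paragraph preceding the lemma. Consequently, by Lemma \ref{duality1} again, it suffices to show that $\eta_D\colon\cl\boxtimes\cl\to\mathcal End_F^!((\nicefrac{G}{B})^2)$ factors through the subsheaf $\mathcal End_F^!((\nicefrac{G}{B})^2,\diag(\nicefrac{G}{B}))$.

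Next, I would transport the problem to $G\times_B\nicefrac{G}{B}$ via the isomorphism $\phi$ of Section \ref{well known}: under $\phi$ the diagonal corresponds to $G\times_B\{eB\}$, since $(gB,gB)=\phi([g,eB])$. Since $v_\Delta$ is $G$-invariant, $\eta_D$ is $G$-equivariant, and combining the induction equivalence of Section \ref{Equivalence} with its compatibility with the duality functor $(F_{(-)})^!$ (cf.\ Section \ref{lin-dual}) reduces the task to verifying that the pullback $i^*\eta_D$, with $i\colon\nicefrac{G}{B}\to G\times_B\nicefrac{G}{B}$, $x\mapsto[1,x]$, factors through $\mathcal End_F^!(\nicefrac{G}{B},\{eB\})$.

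Finally, writing $v_\Delta=\sum_i s_i\boxtimes s_i^*$ with respect to a basis $\{s_i\}$ of $\mathrm{St}=H^0(\nicefrac{G}{B},\cl)$ and its self-dual basis $\{s_i^*\}$, the restricted section $i^*v_\Delta=\sum_i s_i(eB)\cdot s_i^*$ is $B$-equivariant (by $G$-invariance of $v_\Delta$ together with $B$-fixity of $eB$) and hence, up to nonzero scalar, the $B$-highest weight vector $v^+\in\mathrm{St}$. This vector has zero divisor $(p-1)\partial^+(\nicefrac{G}{B})$, with $\{eB\}\subset\partial^+$, and the $B$-canonical Frobenius splitting of $\nicefrac{G}{B}$ built from $v^+$ compatibly splits every Schubert subvariety, including $\{eB\}$ (cf.\ \cite[Thm.\,2.3.8]{BK}). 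Combining this Schubert compatibility with Lemma \ref{duality2} applied on $\nicefrac{G}{B}$ (with divisor $\partial^+$ and the paper's $\cl$) then yields that $i^*\eta_D$ takes values in $\mathcal End_F^!(\nicefrac{G}{B},\{eB\})$, as desired.

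The chief obstacle is the strengthening of Lemma \ref{duality2}'s conclusion from compatibility with the divisor $\partial^+$ to compatibility with the much smaller subvariety $\{eB\}\subset\partial^+$: this demands the finer fact that the $B$-canonical splitting of $\nicefrac{G}{B}$ compatibly splits \emph{all} Schubert varieties rather than merely the codimension-one boundary divisors, together with careful tracking of the $B$-linearization on $i^*(\cl\boxtimes\cl)$ (which differs from the natural one on $\cl$ by the character $(1-p)\rho$) through the induction equivalence in the second step.
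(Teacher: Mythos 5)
Your route is genuinely different from the paper's, and it has a gap in the induction-equivalence step. The paper's proof is direct: by Lemma \ref{duality2}, $\eta_D'$ is compatible with $D = \bigcup_i D_i$, and then Lemma \ref{compatible} together with the intersection-and-components argument at the end of the proof of \cite[Thm.2.3.1]{BK} does the rest, since $\diag(\nicefrac{G}{B}) = G \times_B \{eB\}$ is an irreducible component of an iterated (reduced) intersection of the divisors $D_i = G \times_B X_{s_i w_0}$. No passage to the fibre $\nicefrac{G}{B}$ is ever made.

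The gap in your argument is the claim that the induction equivalence carries $\mathcal End^!_F\big((\nicefrac{G}{B})^2,\diag(\nicefrac{G}{B})\big)$ to $\mathcal End^!_F(\nicefrac{G}{B},\{eB\})$ up to a character twist. That subsheaf is defined using the \emph{full} Frobenius $F_Z$ on $Z = (\nicefrac{G}{B})^2 \simeq G\times_B\nicefrac{G}{B}$, and $F_Z$ does not restrict to $F_{\nicefrac{G}{B}}$ along the closed fibre $i(\nicefrac{G}{B})$: it factors as $F_b \circ F_f$ (Section \ref{decomp}), and only the fibre piece $F_f$ is transported cleanly by $i^*$ — this is exactly what Sections \ref{intr}--\ref{base} and Propositions \ref{propfiber}--\ref{propbase} are designed to disentangle. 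Section \ref{lin-dual} says only that $(F_X)^!$ preserves $H$-linearizations; it does not give the identification you invoke. Even granting the reduction, compatibility of multiplication by $v^+$ with $\{eB\}$ is not an immediate consequence of compatibility of the $B$-canonical splitting (which is built from $v^+v^-$, not from $v^+$ alone); one must further note that $v^-$ is a local unit at $eB$, a step you leave unstated. (A smaller slip: the zero divisor of the $B$-highest weight vector $v^+$ is the Schubert boundary $\partial(\nicefrac{G}{B}) = \bigcup_i X_{s_i w_0}$, not the opposite one; it is $\partial(\nicefrac{G}{B})$ that contains $\{eB\}$.) The paper's argument sidesteps all of these issues.
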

\begin{proof}
We have to prove that $\eta_D'$, defined above, is 
compatible with the diagonal  ${\rm diag}(\nicefrac{G}{B})$.
As $\eta_D'$ is compatible with $D$ it suffices to show 
that $\End_F^{\cl \boxtimes \cl} ((\nicefrac{G}{B})^2,D)$ is contained 
in $\End_F^{\cl \boxtimes \cl} ((\nicefrac{G}{B})^2,{\rm diag}(
\nicefrac{G}{B}) \big)$. This follows by an 
application of Lemma \ref{compatible} and an argument 
as at the end of the proof of \cite[Thm.2.3.1]{BK}. 
\end{proof}

\subsection{}

We return to the setup as in the beginning of this
section. We want to apply the results of the preceding 
sections to the case when the group equals $G \times G$. So let
$X$ denote a $B \times B$-variety and assume that $X$
admits a $B_{\rm sc} \times B_{\rm sc}$-canonical Frobenius
splitting defined by
$$ \theta : ({\rm St} \boxtimes {\rm St}) \otimes
(k_{(p-1)\rho} \boxtimes k_{(p-1)\rho}) \rightarrow
\End_F(X),$$
which is compatible with certain $B \times B$-stable
subvarieties $X_1, \dots, X_m$, i.e. the image of
$\theta$ is contained in $\End_F(X,X_i)$ for all
$i$. Then

\begin{thm}
\label{diag canonical}
The variety
$(G \times G) \times_{(B \times B)} X$
admits a $\diag(B_{\rm sc})$-canonical 
Frobenius splitting which
compatibly Frobenius splits the subvarieties
${\rm{ diag}(G)} \times_{{\rm diag}(B)} X$
and $(G \times G) \times_{(B \times B)} X_i$
for all $i$.  
\end{thm}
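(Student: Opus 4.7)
The plan is to apply Corollary~\ref{B-canonical}, in the formulation of Section~\ref{not ss}, to the ambient group $G\times G$ acting on $X$ via $B\times B$, and then exploit the diagonal-invariant element $v_\Delta$ from Section~\ref{well known} to restrict the resulting structure to the diagonal. First, with ambient group $G\times G$, Borel $B\times B$, and $B_{\rm sc}\times B_{\rm sc}$-canonical Frobenius splitting $\theta$ of $X$, Corollary~\ref{B-canonical} produces a $(G\times G)$-equivariant morphism
$$\Theta : ({\rm St}\boxtimes{\rm St})\otimes ({\rm St}\boxtimes{\rm St}) \to \End_F(Z),$$
and part~(2) of that corollary guarantees the image of $\Theta$ is compatible with $(G\times G)\times_{(B\times B)} X_i$ for every $i$.

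Next, let $v_\Delta\in {\rm St}\otimes {\rm St}$ be the $G$-invariant element of Section~\ref{well known}, viewed as a $\diag(G_{\rm sc})$-invariant element of ${\rm St}\boxtimes{\rm St}$ via restriction to the diagonal. Under the identification ${\rm St}\boxtimes {\rm St} \simeq \End_F^{\cl\boxtimes\cl}((\nicefrac{G}{B})^2)$, Lemma~\ref{diag} asserts that $v_\Delta$ is compatible with $\diag(\nicefrac{G}{B})\subset (\nicefrac{G}{B})^2$. Inserting $v_\Delta$ into the first slot of $\Theta$ therefore gives a $\diag(G_{\rm sc})$-equivariant morphism
$$\Theta_\Delta : {\rm St}\boxtimes{\rm St} \to \End_F(Z),\qquad w\mapsto \Theta(v_\Delta\otimes w),$$
whose image, by Corollary~\ref{B-canonical}(3), lies in $\End_F\bigl(Z,\, q^{-1}(\diag(\nicefrac{G}{B}))\times_{(B\times B)} X\bigr)$, where $q : G\times G\to (\nicefrac{G}{B})^2$ is the quotient map; a direct calculation on $B\times B$-equivalence classes identifies this subvariety with $\diag(G)\times_{\diag(B)} X$ inside $Z$.

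To promote $\Theta_\Delta$ to a $\diag(B_{\rm sc})$-canonical Frobenius splitting, I precompose with the $\diag(B_{\rm sc})$-equivariant inclusion
$$ {\rm St}\otimes k_{(p-1)\rho} \hookrightarrow {\rm St}\boxtimes{\rm St},\qquad v\otimes 1 \mapsto v\otimes v_+, $$
where $v_+\in {\rm St}$ is a highest weight vector (so that the line $k\cdot v_+$ carries the $B$-character $(p-1)\rho$, making the map $B$-equivariant). The resulting $\diag(B_{\rm sc})$-equivariant map ${\rm St}\otimes k_{(p-1)\rho}\to \End_F(Z)$ inherits compatibility with both $\diag(G)\times_{\diag(B)} X$ and every $(G\times G)\times_{(B\times B)} X_i$. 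By Corollary~\ref{B-canonical}(1), the image of the lowest weight vector $v_-\otimes 1$ is a Frobenius splitting if and only if $\phi(v_\Delta\otimes (v_-\otimes v_+))\neq 0$; the functional $w\mapsto \phi(v_\Delta\otimes w)$ on ${\rm St}\otimes {\rm St}$ is $\diag(G)$-invariant and nonzero (by nondegeneracy of $\phi$ and $v_\Delta\neq 0$), hence a nonzero multiple of the Steinberg pairing $\phi_G$ on ${\rm St}$, and $\phi_G(v_-,v_+)\neq 0$ by nondegeneracy.

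The main obstacle I anticipate is the geometric identification $q^{-1}(\diag(\nicefrac{G}{B}))\times_{(B\times B)} X = \diag(G)\times_{\diag(B)} X$ inside $Z$; once this is in place, the remainder of the argument amounts to bookkeeping with Corollary~\ref{B-canonical}, Lemma~\ref{diag}, and the nondegeneracy of $\phi$ established in Section~\ref{canonical}.
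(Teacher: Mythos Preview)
Your argument is correct and follows essentially the same route as the paper: apply Corollary~\ref{B-canonical} to the $B_{\rm sc}\times B_{\rm sc}$-canonical splitting of $X$, insert the $\diag(G)$-invariant element $v_\Delta$ in the first slot using Lemma~\ref{diag}, and then restrict $\Theta_\Delta$ to ${\rm St}\otimes k_{(p-1)\rho}$ via the highest weight line. You spell out two points the paper leaves implicit---the identification $q^{-1}(\diag(\nicefrac{G}{B}))\times_{(B\times B)} X \cong \diag(G)\times_{\diag(B)} X$ and the verification that $\phi(v_\Delta\otimes(v_-\otimes v_+))\neq 0$---but these are exactly the checks the paper's terser proof relies on.
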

\begin{proof}
It suffices to consider the case where
$G = H_{\rm sc}$ (cf. discussion in Section 
\ref{not ss}).
By Corollary \ref{B-canonical}
there exists a $G \times G$-equivariant
morphism
$$ \Theta : ({\rm St} \boxtimes {\rm St})
\otimes ({\rm St} \boxtimes {\rm St})
\rightarrow \End_F((G \times G)
\times_{(B \times B)} X),$$
satisfying certain compatibility
conditions.
Let $v_\Delta \in  {\rm St}
\boxtimes {\rm St}$ be a nonzero 
${\rm diag}(G)$-invariant element and 
let $v \in  {\rm St}
\boxtimes {\rm St} $ be arbitrary.
Then by Corollary \ref{B-canonical} and 
Lemma \ref{diag} 
the element $\Theta \big(v_\Delta \otimes v 
\big)$ is compatible with ${\rm{ diag}(G)} 
\times_{{\rm diag}(B)} X$ and $(G \times G) 
\times_{(B \times B)} X_i$ for all $i$.
In particular, if we define the ${\diag(G)}$-equivariant
morphism  
 $$\Theta_{\Delta} : {\rm St} \otimes {\rm St}
\rightarrow \End_F((G \times G)
\times_{(B \times B)} X),$$
by $\Theta_{\Delta}(v) = \Theta \big(v_\Delta \otimes v 
\big)$, then every element in the image 
of $\Theta_\D$ 
is compatible with  ${\rm{ diag}(G)} 
\times_{{\rm diag}(B)} X$ and $(G \times G) 
\times_{(B \times B)} X_i$ for all $i$. 
Consider $k_{(p-1)\rho}$ as the highest weight 
line in ${\rm St}$. Then the restriction 
of $\Theta_\D$ to ${\rm St} \otimes k_{(p-1)\rho}$ 
defines a $\diag(B)$-canonical Frobenius splitting of 
$(G \times G) \times_{(B \times B)} X$ with the
desired properties.
\end{proof}

Notice that by the general machinery of 
canonical Frobenius splittings (see e.g. 
\cite[Prop.4.1.17]{BK}) the existence 
of a Frobenius splitting of 
${\rm{ diag}(G)} \times_{{\rm diag}(B)} X$
follows if $X$ admits a ${\rm diag}(B_{\rm sc})$-canonical 
Frobenius splitting. In the 
above setup $X$
only admits a $B_{\rm sc} \times B_{\rm sc}$-canonical
Frobenius splitting which is less
restrictive. However, in contrast to 
the situation when $X$ admits a ${\rm diag}(B_{\rm sc})$-canonical
Frobenius splitting, the present Frobenius 
splitting is not necessarily compatible 
with subvarieties
of the form $\overline{B \dot{w} B} \times_B X$,
with $w$ denoting an element of the Weyl group
and $\overline{B \dot{w} B}$ denoting the
closure of $B \dot{w} B$ in $G$.

\section{$G$-Schubert varieties in equivariant Embeddings}

From now on, unless otherwise stated, we assume that 
$G$ is a connected reductive group.

\subsection{Equivariant embeddings}
Consider $G$ as a $G \times G$-variety by left
and right translation. An equivariant embedding
$X$ of $G$ is then a normal irreducible $G \times G$-variety
containing an open dense subset which is
$G \times G$-equivariantly isomorphic to $G$.
In particular, we may identify $G$ with an
open subset of $X$, and the complement $X \setminus 
G$ is then called the boundary. As $G$ is an 
affine variety the boundary is of pure codimension
1 in $X$ \cite[Prop.3.1]{Har}.
Any equivariant embedding
of $G$ is a spherical variety (with respect
to the induced $B \times B$-action) and
thus $X$ contains finitely may $B \times B
$-orbits.

\subsection{Wonderful compactifications}
\label{wonderful}
When $G=G_{\rm ad}$ is of adjoint type there exists a
distinguished equivariant embedding ${\bf X}$ of
$G$ which is called the {\it wonderful compactification}
(see e.g. \cite[6.1]{BK}).

The
boundary ${\bf X} \setminus G$ is a union of irreducible divisors
${\bf X}_j$, $j \in \D$, which intersect transversely. For a
subset $J \subset \D$, we denote the intersection $\cap_{j \in J}
{\bf X}_j$ by ${\bf X}_J$. As a $(G \times G)$-variety, ${\bf X}_J$
is isomorphic to the variety $(G \times G) \times_{P_{\D \setminus
J}^- \times P_{\D \setminus J}} {\bf Y}$, where ${\bf Y}$ denotes the wonderful
compactification of the group of adjoint type associated to $L_{\D \setminus J}$. Here
the $P_{\D \setminus J}^- \times P_{\D \setminus J}$-action on ${\bf Y}
$
is defined by the quotient maps  $P_{\D \setminus J} \rightarrow
L_{\D \setminus J}$ and $P_{\D \setminus J}^- \rightarrow L_{\D
\setminus J}$. In particular, ${\bf X}_\Delta$ is $G \times G$-equivariantly
isomorphic to the variety $\nicefrac{G}{B^-} \times \nicefrac{G}{B}$.

\subsection{Toroidal embeddings}
Let $G_{\rm ad}$ denote the group of adjoint type associated
to $G$, and let $\bf X$ denote the wonderful compactification
of $G_{\rm ad}$. An embedding $X$ of the reductive group  $G$ 
is then called {\it toroidal} if the canonical map 
$G \rightarrow G_{\rm ad}$ admits an extension 
$X \rightarrow  {\bf X}$.

\subsection{$G$-Schubert varieties}
By a {\it $G$-Schubert variety} in  an equivariant 
embedding $X$ we will mean a subvariety of the form
$\diag(G) \cdot V$, for some $B \times B$-orbit closure 
$V$. Notice that $\diag(G) \cdot V$ is the image of 
$\diag(G) \times_{\diag(B)} V$ under the proper map 
$$\diag(G) \times_{\diag(B)} X \to X,$$
$$ [g,x] \mapsto g \cdot x,$$
and thus $G$-Schubert varieties are closed 
$\diag(G)$-stable subvarieties of $X$. 

If $G = G_{\rm ad}$ and $X = {\bf X}$ is the wonderful 
compactification then a $G$-Schubert variety in ${\bf X}_\Delta$ 
is $\diag(G)$-equivariantly isomorphic to a variety of 
the form $G \times_B X(w)$, where $X(w)$ denotes a Schubert
variety in $\nicefrac{G}{B}$. In particular, this explains
the name {\it $G$-Schubert varieties} as this is the 
name used  for varieties of the form   $G \times_B X(w)$.

In the rest of this section, we will relate $G$-Schubert
varieties to closures of so-called $G$-stable pieces. Our
primary interest are $G$-stable pieces in wonderful 
compactifications but below we will also describe 
the toroidal case in general.

\subsection{$G$-stable pieces in the wonderful compactification}
\label{won}

Let $G=G_{\rm ad}$ denote a group of adjoint type and let $\bf X$
denote its wonderful compactification. Let $J \subset \Delta$ and 
identify ${\bf X}_J$ with $(G \times G) \times_{P_{\D \setminus
J}^- \times P_{\D \setminus J}} {\bf Y}$ as in Section \ref{wonderful}.
Using this identification it easily follows that there exists 
a unique element in ${\bf X}_J$ which is invariant under 
$U_J^- \times U_J$ and $\diag(L_J)$. We denote this element 
by ${\bf h}_J$ and note that as an element of 
 $(G \times G) \times_{P_{\D \setminus
J}^- \times P_{\D \setminus J}} {\bf Y}$
it equals $[(e, e), e_J]$, where $e$ (resp. $e_J$) denotes 
the identity element of $G$ (resp.  the adjoint group 
associated to $L_{\D \setminus J}$). For $w \in W^{\D \setminus J}$, we then let
$${\bf X}_{J, w}=\diag(G) (B w, 1) \cdot {\bf h}_J,$$
and call ${\bf X}_{J, w}$ a $G$-stable piece of ${\bf X}$. 
A $G$-stable piece is a locally closed subset of
${\bf X}$ and by \cite[section 12]{L} and \cite[section 2]{He1}, 
we can use them to decompose ${\bf X}$ as follows
$${\bf
X}=\bigsqcup_{\stackrel{J \subset \D}{ w \in W^{\D \setminus J}}}
{\bf X}_{J, w}.$$
Moreover, by the proof of \cite[Theorem 4.5]{He2}, 
any $G$-Schubert variety is  a finite union
of $G$-stable pieces. In particular, we may
think of $G$-Schubert varieties as closures 
of $G$-stable pieces.

\subsection{$G$-stable pieces in arbitrary toroidal embeddings} 

We fix a toroidal embedding $X$ of $G$. The irreducible components
of the boundary $X \setminus G$ will be denoted by $X_1, \dots, X_n$. 
For each $G \times G$-orbit closure $Y$
in $X$ we then associate the set $$K_Y = \{ i \in \{ 1, \dots, n \}
\mid Y \subset X_i \},$$ where by definition $K_Y = \varnothing$
when $Y = X$. Then by \cite[Prop.6.2.3]{BK}, $Y = \cap_{i \in K_Y}
X_i$. Moreover, we define
$$\ci = \{ K_Y \subset \{ 1, \dots, n \} \mid Y \text{~a
$G \times G$-orbit closure in $X$ } \},$$
and write $X_K := \cap_{i \in K} X_i$ for $K \in \ci$.
Then $(X_K)_{K \in \ci}$ are the set of  closures of $G \times G$-orbits
in $X$. 
Let now $\pi_X : X \rightarrow {\bf X}$ denote the 
given extension of $G \rightarrow G_{\rm ad}$. Then 
the closure of $\pi_X(X_K)$ equals  ${\bf X}_{P(K)}$ for some
unique subset $P(K)$ of $\D$. This defines
a map $P: \ci \rightarrow \mathcal P(\D)$,
where $\mathcal P(\D)$ denotes the set of subsets of $\D$.

As in \cite[5.4]{HT2}, for $K \in \ci$ we may choose a base point $h_K$ in the open $G \times G$-orbit of $X_K$ which maps to  ${\bf h}_{P(K)}$. By \cite[Proposition 5.3]{HT2}, $X_K$ is then naturally isomorphic to $(G \times G) \times_{P^-_{\D \setminus J} \times P_{\D \setminus J}} \overline{L_{\D \setminus J} \cdot h_K}$, where $J=P(K)$ and $\overline{L_{\D \setminus J} \cdot h_K}$ is a toroidal embedding of a quotient $\nicefrac{(L_{\D \setminus J})}{H}$ by some subgroup $H$ of the center of $L_{\D \setminus J}$.

For $K \in \ci$ and $w \in W^{\D \setminus p(K)}$, we then define  
$$X_{K, w}=\diag(G) (B w, 1) \cdot h_K,$$ and call ${X}_{K, w}$ a $G$-stable piece of $X$. One can then show, in the same way as in \cite[4.3]{He2}, that $$X=\bigsqcup_{\stackrel{K \in \ci}{w \in W^{\D \setminus P(K)}}} X_{K, w}.$$
Also similar to the proof of \cite[Theorem 4.5]{He2}, for any $B
\times B$-orbit closure $V$ in $X$, the $G$-Schubert variety
$\diag(G) \cdot V$ is a finite union of $G$-stable pieces.
In particular, $G$-Schubert varieties are closures
of $G$-stable pieces.
 
\section{Frobenius splitting of $G$-Schubert varieties}
\label{sect 9}
In this section, we assume that $X$ is an equivariant 
embedding of $G$. Let $G_{\rm sc}$ denote a
simply connected cover of the semisimple commutator
subgroup $(G,G)$ of $G$. We fix a Borel
subgroup $B_{\rm sc}$ of $G_{\rm sc}$ which is 
compatible with the Borel subgroup $B$ in $G$. 
Similarly we fix a maximal torus $T_{\rm sc} \subset 
B_{\rm sc}$.  

Let $X_1, \dots, X_n$ denote the boundary divisors of $X$. 
The closure within $X$ of the
$B \times B$-orbit $Bs_j w_0 B \subset G$ will
be denoted by $D_j$. Then $D_j$ is  of
codimension 1 in $X$. The translate
$(w_0,w_0) D_j$ of $D_j$ will be denoted
by $\tilde{D}_j$.

By earlier work we know

\begin{thm}\cite[Prop.7.1]{HT2}
\label{can-thm}
The equivariant embedding $X$ admits a $B_{\rm sc} 
\times B_{\rm sc}$-canonical
Frobenius splitting which compatibly Frobenius
splits the closure
of every $B \times B$-orbit.
\end{thm}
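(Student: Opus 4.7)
My plan proceeds in three steps: reduce to a smooth toroidal embedding, construct the splitting from an explicit section of the anticanonical bundle, and then deduce both the full compatibility with $B \times B$-orbit closures and the $B_{\rm sc} \times B_{\rm sc}$-canonicity.

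First, I would reduce to the case where $X$ is smooth and toroidal. By equivariant resolution of singularities in the spherical setting, there exists a proper birational $G \times G$-equivariant morphism $f: \tilde X \to X$ with $\tilde X$ a smooth toroidal embedding of $G$ and $f_*\mathcal O_{\tilde X} = \mathcal O_X$ (the latter using normality of $X$ together with the fact that $f$ is an isomorphism over $G$). Each $B \times B$-orbit closure of $X$ is the image of the closure of a $B \times B$-orbit of $\tilde X$, so by Lemma \ref{lem push forward} any $B_{\rm sc} \times B_{\rm sc}$-canonical Frobenius splitting of $\tilde X$ compatible with every $B \times B$-orbit closure descends to a splitting of $X$ with the analogous compatibility.

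Second, on $\tilde X$ I would produce the splitting from a distinguished section of $\omega_{\tilde X}^{-1}$. The key claim is that there exists a $T \times T$-eigensection $\sigma \in H^0(\tilde X, \omega_{\tilde X}^{-1})$ whose divisor of zeros is the reduced effective divisor
\[
D \;=\; \sum_i X_i \;+\; \sum_j (D_j + \tilde D_j),
\]
where the $X_i$ are the boundary divisors of $\tilde X$ and $D_j, \tilde D_j$ are the codimension-one Schubert-type $B \times B$-orbit closures defined above the theorem. This identity in $\operatorname{Pic}(\tilde X)$ is verified by first restricting to the open orbit $G$, where $\omega^{-1}$ is canonically trivialised and $\sigma$ reduces to the product of root vectors entering the Weyl denominator (fixing the weight to $(-2\rho,-2\rho)$), and then matching multiplicities on each boundary divisor using the toroidal description of $\tilde X$. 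Taking $\sigma^{p-1}$ gives a section of $\omega_{\tilde X}^{1-p} = \mathcal End_F^!(\tilde X)$, and Lemma \ref{duality2} produces from it an element $s \in \End_F(\tilde X)$ compatible with the support of $D$; after rescaling (justified by the non-vanishing of $\sigma$ on the open orbit $G$, where Frobenius splittings are abundant), $s$ becomes a Frobenius splitting of $\tilde X$ simultaneously compatible with every codimension-one $B \times B$-orbit closure.

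Third, the compatibility is extended to higher codimension $B \times B$-orbit closures by induction, using Lemma \ref{compatible}(2) together with \cite[Prop.1.2.1]{BK}, which guarantees that scheme-theoretic intersections of compatibly split subvarieties are reduced and remain compatibly split; the combinatorial input is that every $B \times B$-orbit closure in a smooth toroidal embedding of $G$ arises as an irreducible component of an iterated intersection of the codimension-one ones. The $B_{\rm sc} \times B_{\rm sc}$-canonicity then follows because $\sigma^{p-1}$ has $T_{\rm sc} \times T_{\rm sc}$-weight $((1-p)\rho,(1-p)\rho)$, so the $B_{\rm sc} \times B_{\rm sc}$-submodule of $\End_F(\tilde X)$ generated by $s$ is a quotient of ${\rm Ind}_{B_{\rm sc}}^{G_{\rm sc}}((1-p)\rho) \boxtimes {\rm Ind}_{B_{\rm sc}}^{G_{\rm sc}}((1-p)\rho) = {\rm St} \boxtimes {\rm St}$ and is isomorphic to ${\rm St} \boxtimes {\rm St}$ by simplicity. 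The main obstacle is the second step: identifying the divisor of the anticanonical section on a smooth toroidal embedding. This is where the essential geometric input enters, since the contribution of the boundary divisors to $\omega^{-1}_{\tilde X}$ must be computed and matched against the $D_j$ and $\tilde D_j$, and everything downstream is then routine via the framework of Sections \ref{frobsplit} and \ref{canonical}.
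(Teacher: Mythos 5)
The statement is given in the paper only by citation to \cite[Prop.7.1]{HT2}, so there is no in-text argument to compare against, but your outline is a sensible route and largely matches how such splittings are usually produced. Two of your steps, however, have genuine gaps as written.

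First, ``$s$ becomes a Frobenius splitting after rescaling, justified by the non-vanishing of $\sigma$ on the open orbit'' is not a proof. Completeness of $\tilde X$ does imply that ${\rm ev}_{\tilde X}(s)$ is a constant, but non-vanishing of $\sigma$ on $G$ does not force that constant to be nonzero; most sections of $\omega^{1-p}_{\tilde X}$ that do not vanish on $G$ evaluate to zero. One must check the Mehta--Ramanathan residue criterion at a suitable $T\times T$-fixed point, and this is exactly where the precise normal-crossings structure of the divisor $\sum_i X_i + \sum_j(D_j + \tilde D_j)$ is used. The paper itself relies on this local computation via \cite[proof of Thm.6.2.7]{BK} (see the proof of Lemma \ref{lem2}); without it, your step two is an assertion rather than an argument.

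Second, the weight computation behind your canonicity claim is incorrect, and the argument does not close even after correcting it. As noted in the paper, $\tau_i$ has $B_{\rm sc}^2$-weight $\omega_i\boxtimes(-w_0\omega_i)$; hence $\tilde\tau_i$ has $T_{\rm sc}^2$-weight $w_0\omega_i\boxtimes(-\omega_i)$, and each $\sigma_j$ is $G_{\rm sc}^2$-invariant, so the anticanonical eigensection $\sigma=\prod_i\tau_i\prod_i\tilde\tau_i\prod_j\sigma_j$ has $T_{\rm sc}^2$-weight $(\rho+w_0\rho)\boxtimes(-w_0\rho-\rho)=(0,0)$; so does $\sigma^{p-1}$, not $((1-p)\rho,(1-p)\rho)$. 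Weight $(0,0)$ is in fact the \emph{correct} $T^2$-weight for a $B^2$-canonical splitting (since $v_-\otimes 1\in{\rm St}\otimes k_{(p-1)\rho}$ has weight $(1-p)\rho+(p-1)\rho=0$), but having the right weight is not sufficient. To conclude that the $B_{\rm sc}^2$-submodule generated by $s$ is a quotient of $({\rm St}\boxtimes{\rm St})\otimes(k_{(p-1)\rho}\boxtimes k_{(p-1)\rho})$ one must also verify the divided-power relations $e_\alpha^{(n)}\,s=0$ for every simple root $\alpha$ in each $B_{\rm sc}$-factor and every $n\geq p$; that is the substance of $B$-canonicity (cf.\ \cite[Chap.4]{BK}) and is precisely the nontrivial verification your argument skips.
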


As a direct consequence of Theorem \ref{diag canonical}
we then obtain

\begin{cor}
\label{cor-equi}
The variety $(G \times G) \times_{(B \times B)} X$ admits
a $\diag(B_{\rm sc})$-canonical Frobenius splitting 
which is compatible with all subvarieties of the form
$(G \times G)
\times_{(B \times B)} Y$ and ${\rm diag}(G) \times_{{\rm diag}(B)}
Y$, for a $B \times B$-orbit closure $Y$ in $X$.
\end{cor}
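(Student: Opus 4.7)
The approach is to apply Theorem \ref{diag canonical} with the full (finite) collection of closures of $B \times B$-orbits in $X$ as the list $\{X_i\}$, and then to recover the remaining compatibility statement from the general machinery of Frobenius-split intersections.

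First, by Theorem \ref{can-thm} the embedding $X$ admits a $B_{\rm sc} \times B_{\rm sc}$-canonical Frobenius splitting which is compatible with every $B \times B$-orbit closure; since $X$ is spherical, there are only finitely many such closures, say $Y_1, \dots, Y_m$. Feeding this input into Theorem \ref{diag canonical} at once yields a $\diag(B_{\rm sc})$-canonical Frobenius splitting $\theta$ of $W := (G \times G) \times_{(B \times B)} X$ whose image in $\End_F(W)$ is compatible with $\diag(G) \times_{\diag(B)} X$ and with each $(G \times G) \times_{(B \times B)} Y_i$. This supplies two of the three families of compatibilities demanded by the corollary.

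For the last family, the plan is to identify
$$\diag(G) \times_{\diag(B)} Y_i \;=\; \bigl(\diag(G) \times_{\diag(B)} X\bigr) \cap \bigl((G \times G) \times_{(B \times B)} Y_i\bigr)$$
as a scheme-theoretic intersection inside $W$. The set-theoretic equality is a routine check exploiting the $B \times B$-invariance of $Y_i$, and the scheme-theoretic refinement reduces, via local trivializations of the principal $B \times B$-bundle $G \times G \to (G \times G)/(B \times B)$, to the trivial statement that $X \cap Y_i = Y_i$. Once this identification is in place, the conclusion is immediate: $\theta$ contains the honest Frobenius splitting $\theta(v_-)$ (with $v_-$ the distinguished lowest weight vector of the Steinberg module), and $\theta(v_-)$ is compatible with both intersecting subvarieties, so \cite[Prop.1.2.1]{BK} guarantees that the scheme-theoretic intersection is automatically reduced. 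Lemma \ref{compatible}(2) then propagates compatibility from every element in the image of $\theta$ to $\diag(G) \times_{\diag(B)} Y_i$, finishing the argument. The only (mild) obstacle is verifying the scheme-theoretic intersection identity above, but this is a purely local calculation in the principal $B \times B$-bundle.
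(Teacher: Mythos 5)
Your proof is correct and fills in the step the paper elides under ``direct consequence.'' Theorem \ref{diag canonical} by itself gives compatibility only with $\diag(G)\times_{\diag(B)} X$ and $(G\times G)\times_{(B\times B)} Y$ (not with $\diag(G)\times_{\diag(B)} Y$ for proper orbit closures $Y$), so some extra observation is required; your identification of $\diag(G)\times_{\diag(B)} Y$ as the scheme-theoretic intersection of the two already-compatible subvarieties, combined with Lemma \ref{compatible}(2), supplies exactly that and matches what the paper must have intended. Two small remarks. First, the local computation does not literally reduce to ``$X\cap Y_i = Y_i$'': in a trivialization $\pi^{-1}(U)\simeq U\times X$ of the bundle $\pi : (G\times G)\times_{(B\times B)} X\to \nicefrac{G}{B}\times\nicefrac{G}{B}$, the subvariety $\diag(G)\times_{\diag(B)} X$ becomes $(U\cap\diag(\nicefrac{G}{B}))\times X$ and $(G\times G)\times_{(B\times B)} Y_i$ becomes $U\times Y_i$, so what you actually need is the product identity $\bigl((U\cap\diag(\nicefrac{G}{B}))\times X\bigr)\cap\bigl(U\times Y_i\bigr)=(U\cap\diag(\nicefrac{G}{B}))\times Y_i$ together with the fact that the right-hand side is reduced (a product of varieties over an algebraically closed field is a variety). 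Second, once the scheme-theoretic identity is established reducedness is automatic, so invoking \cite[Prop.1.2.1]{BK} to deduce it is a harmless redundancy rather than a logical necessity.
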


\begin{prop}
\label{P-stable} The equivariant embedding $X$ admits a
$\diag(B_{\rm sc})$-canonical Frobenius splitting which compatibly
 splits all 
$G$-Schubert varieties in $X$.

\end{prop}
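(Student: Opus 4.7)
The plan is to obtain the desired splitting by first applying Corollary \ref{cor-equi}, then restricting to the closed subvariety $\diag(G) \times_{\diag(B)} X$, and finally pushing down along the $\diag(G)$-action map.

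First I would apply Corollary \ref{cor-equi} to obtain a $\diag(B_{\rm sc})$-canonical Frobenius splitting $s$ of $(G \times G) \times_{(B \times B)} X$ that is compatible with $\diag(G) \times_{\diag(B)} X$ and with every subvariety $\diag(G) \times_{\diag(B)} V$ for $V$ a $B \times B$-orbit closure in $X$. Since a compatibly split closed subvariety inherits a Frobenius splitting, and since this restriction procedure is $\diag(B_{\rm sc})$-equivariant at the level of the relevant $\End_F$-sheaves, restriction of $s$ yields a $\diag(B_{\rm sc})$-canonical Frobenius splitting $\bar s$ of $\diag(G) \times_{\diag(B)} X$ which is compatible with each $\diag(G) \times_{\diag(B)} V$.

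The key geometric input is the $\diag(G)$-equivariant isomorphism
\[
\phi : \diag(G) \times_{\diag(B)} X \xrightarrow{\;\sim\;} (G/B) \times X, \qquad [g,x] \longmapsto (gB,\, g \cdot x),
\]
under which the action map $\mu : \diag(G) \times_{\diag(B)} X \to X$, $[g,x] \mapsto g \cdot x$, is identified with the projection $(G/B) \times X \to X$. Because $G/B$ is complete, irreducible and connected, one has $\mu_* \co_{\diag(G) \times_{\diag(B)} X} = \co_X$, so Lemma \ref{lem push forward} applies to $\mu$. Pushing $\bar s$ forward along $\mu$ yields a Frobenius splitting $\tilde s$ of $X$. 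Moreover, $\mu\bigl(\diag(G) \times_{\diag(B)} V\bigr) = \diag(G) \cdot V$, which is exactly the $G$-Schubert variety associated to $V$, so $\tilde s$ is compatible with every $G$-Schubert variety of $X$.

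Finally I would verify that the $\diag(B_{\rm sc})$-canonical property is preserved. Both the restriction along the $\diag(B_{\rm sc})$-stable closed inclusion $\diag(G) \times_{\diag(B)} X \hookrightarrow (G \times G) \times_{(B \times B)} X$ and the pushforward along the $\diag(B_{\rm sc})$-equivariant morphism $\mu$ are $\diag(B_{\rm sc})$-equivariant operations, so they transport the canonical map ${\rm St} \otimes k_{(p-1)\rho} \to \End_F\bigl((G \times G) \times_{(B \times B)} X\bigr)$ defining $s$ to a canonical map ${\rm St} \otimes k_{(p-1)\rho} \to \End_F(X)$ defining $\tilde s$. The main technical point is the identification of the orbit map $\mu$ with the second projection via $\phi$, together with the verification $\mu_* \co = \co_X$; once these are in place, the remaining steps are formal consequences of Lemma \ref{lem push forward} and the $\diag(B_{\rm sc})$-equivariance of the constructions, and the substantive geometric content is entirely absorbed into Corollary \ref{cor-equi}.
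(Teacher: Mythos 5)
Your proposal matches the paper's proof essentially step for step: apply Corollary \ref{cor-equi}, restrict to the compatibly split $\diag(G) \times_{\diag(B)} X$, identify it $\diag(G)$-equivariantly with $(G/B) \times X$ so that the action map becomes second projection with $\pi_* \co = \co_X$, and push forward via Section \ref{push-forward} / Lemma \ref{lem push forward}, observing that $\diag(B_{\rm sc})$-equivariance of each operation preserves the canonical structure. This is correct and is the same argument as in the paper.
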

\begin{proof}
By Corollary \ref{cor-equi} the variety $Z={\rm diag}(G)
\times_{{\rm diag}(B)} X$ admits a $\diag(B_{\rm sc})$-canonical Frobenius
splitting which is compatible with all
subvarieties of the form ${\rm diag}(G) \times_{{\rm diag}(B)}
Y$, with $Y$ denoting a $B \times B$-orbit closure
in $X$. As $X$ is a ${\rm diag}(G)$-stable we may
identify $Z$ with $\nicefrac{G}{B} \times X$ using
the isomorphism 
$$ G \times_B X \rightarrow \nicefrac{G}{B} \times X,$$
$$ [g,x] \mapsto (gB,gx).$$
In particular, we see that the morphism
$$ \pi :  Z = {\rm diag}(G) \times_{{\rm diag}(B)} X
\rightarrow X,$$
$$ [g,x] \mapsto g \cdot x,$$
is projective 
and that $\pi_*(\co_Z) = \co_X$.
As a consequence (see Section \ref{push-forward})
the $\diag(B_{\rm sc})$-canonical Frobenius splitting of 
$Z$ induces a $\diag(B_{\rm sc})$-canonical 
Frobenius splitting of $X$ which is compatible 
with all subvarieties of the form
$$\pi ({\rm diag}(G) \times_{{\rm diag}(B)}
Y)= {\rm diag}(G) \cdot Y,$$
i.e. with all the $G$-Schubert varieties in
$X$. This ends the proof.
\end{proof}

As a direct consequence of Proposition \ref{P-stable}, 
we conclude the following vanishing result 
(see \cite[Theorem 1.2.8]{BK}).

\begin{cor}
\label{cohvan1}
Let $X$ denote a projective equivariant embedding of $G$. Let
$\mathcal X$ denote a $G$-Schubert variety in $X$  and let $\cl$ denote an
ample line bundle on $\mathcal X$. Then $${\rm H}^i(\mathcal X,
\cl)=0, i>0.$$ Moreover, if $\tilde{ \mathcal X}\subset \mathcal X$ is
another $G$-Schubert variety, then the restriction map $${\rm
H}^0(\mathcal X, \cl) \to {\rm H}^0(\tilde{ \mathcal X}, \cl),$$ is
surjective.
\end{cor}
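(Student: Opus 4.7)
The plan is to derive the corollary as a direct application of Proposition \ref{P-stable} together with the standard consequences of Frobenius splitting for projective varieties, exactly in the spirit of \cite[Thm.1.2.8]{BK}.

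First, I would invoke Proposition \ref{P-stable} to produce a single Frobenius splitting $s$ of $X$ that is simultaneously compatible with $\mathcal X$ and $\tilde{\mathcal X}$ (both are $G$-Schubert varieties by assumption). Since $X$ is projective, both $\mathcal X$ and $\tilde{\mathcal X}$ are projective as well. As noted in Section \ref{Fsplit}, a Frobenius splitting of $X$ compatible with $\mathcal X$ induces a Frobenius splitting $s_{\mathcal X}$ of $\mathcal X$; and because $s$ is compatible with $\tilde{\mathcal X} \subseteq \mathcal X$, the induced splitting $s_{\mathcal X}$ is again compatible with $\tilde{\mathcal X}$ (the ideal sheaf $\mathcal I_{\tilde{\mathcal X}/X}$ restricts to $\mathcal I_{\tilde{\mathcal X}/\mathcal X}$).

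Next, I would apply the standard cohomological consequence of Frobenius splitting (which is itself the prototype of Proposition \ref{prop stable}, taking $D=0$ and $n=1$): on any projective Frobenius split variety $Y$ and for any ample line bundle $\cm$ on $Y$, one has ${\rm H}^i(Y,\cm)=0$ for $i>0$. Applied to $(Y,\cm) = (\mathcal X, \cl)$ this yields the first claim. For the surjectivity claim, I would consider the short exact sequence
$$ 0 \rightarrow \mathcal I_{\tilde{\mathcal X}} \otimes \cl \rightarrow \cl \rightarrow \cl_{|\tilde{\mathcal X}} \rightarrow 0, $$
on $\mathcal X$; the associated long exact sequence in cohomology reduces the surjectivity of the restriction to the vanishing ${\rm H}^1(\mathcal X, \mathcal I_{\tilde{\mathcal X}} \otimes \cl)=0$. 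This vanishing is the standard consequence of having a compatible Frobenius splitting with an ample line bundle and follows by the argument of \cite[Prop.1.13]{Ram} (or equivalently the second half of Proposition \ref{prop stable} here, applied in the stable sense with $D=0$): the splitting $s_{\mathcal X}$ induces, for every $n\ge 1$, an inclusion of ${\rm H}^1(\mathcal X,\mathcal I_{\tilde{\mathcal X}}\otimes \cl)$ into ${\rm H}^1(\mathcal X,\mathcal I_{\tilde{\mathcal X}}\otimes \cl^{p^n})$, and for $n$ sufficiently large the latter group vanishes by Serre vanishing since $\cl$ is ample.

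There is no real obstacle here: the corollary is a formal consequence once the compatible splitting of Proposition \ref{P-stable} is in hand, and the only content is the translation of ``compatibly split'' into cohomological vanishing and surjectivity of restrictions via the standard Mehta--Ramanathan/Ramanan--Ramanathan machinery recorded in \cite[Ch.1]{BK}.
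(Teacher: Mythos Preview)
Your proposal is correct and matches the paper's own argument: the paper simply states that the corollary is a direct consequence of Proposition~\ref{P-stable} together with \cite[Thm.~1.2.8]{BK}, which is exactly what you have spelled out in detail.
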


Later (i.e. Cor. \ref{vanishing}) we will generalize the
vanishing part of this result to nef line bundle.

\subsection{F-splittings along ample divisors}
\label{F-split section}

In this subsection we assume that $X$ is toroidal.
The following structural properties
of toroidal embeddings can all be found in
\cite[Sect.6.2]{BK}.
Let $X_0$ denote the complement in $X$ of the union of
the subsets $\overline{B s_i B^-}$ for $i \in \D$.
If we let $\bar{T}$ denote the closure of $T$ in
$X$, then $X_0$ admits a decomposition defined 
by the following isomorphism
\begin{equation}
\label{localiso}
 U \times U^- \times (\bar{T} \cap X_0) \rightarrow X_0, ~
 (x,y,z) \mapsto (x,y) \cdot z.
\end{equation}
Moreover, every $G \times G$-orbit
in $X$ intersects $ (\bar{T} \cap X_0)$ in a 
unique orbit under the left action of $T$. 
Notice here that as $T$ is commutative the 
$T \times T$-orbits and the (left) $T$-orbit 
in $\overline{T}$ will coincide.

\begin{lem}
\label{ample}
Let $X$ denote a projective toroidal equivariant
embedding of $G$ and let $Y$ denote a $G \times
G$-orbit closure in $X$. Let $K$ denote the subset
of $\{ 1, \dots, n \}$ consisting of those $j$ 
such that $Y$ is contained in the boundary component 
$X_j$. Then
$$Y \cap (\bigcup_{j \notin K} X_j \cup \bigcup_{i \in \D}
(1,w_0) D_i),$$
has pure codimension 1 in $Y$ and contains the support of
an ample effective Cartier divisor on $Y$.
\end{lem}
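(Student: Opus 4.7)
Set
$$ Y^* := Y \setminus \Bigl(\bigcup_{j \notin K} X_j \cup \bigcup_{i \in \D} (1,w_0)D_i \Bigr), $$
the open complement in $Y$ of the closed set to be analyzed. The plan is to identify $Y^*$ as an affine open subset of $Y$ and then invoke the standard fact (a form of Goodman's affineness criterion) that if $U$ is an affine open subset of a complete normal variety $W$, then $W \setminus U$ has pure codimension one in $W$ and supports an ample effective Cartier divisor. Both assertions of the lemma then follow at once.

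To identify $Y^*$, I would first observe that $(1,w_0)D_i$ is the closure in $X$ of $(1,w_0) \cdot (B s_i w_0 B) = B s_i B^-$, so by the very definition of $X_0$ as the complement in $X$ of $\bigcup_i \overline{Bs_iB^-}$ one has $X \setminus \bigcup_{i \in \D}(1,w_0)D_i = X_0$. On the other hand, since every $G \times G$-orbit closure $X_L$ with $L \supsetneq K$ is contained in some $X_j$ with $j \in L \setminus K$, the open $G \times G$-orbit of $Y = X_K$ equals $Y \setminus \bigcup_{j \notin K} X_j$. Hence $Y^*$ coincides with the intersection of the open $G \times G$-orbit of $Y$ with $X_0$.

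The next step is to apply the local isomorphism (\ref{localiso}). Since $Y$ is $G \times G$-stable, in particular $(U \times U^-)$-stable under left and right multiplication, the preimage of $Y \cap X_0$ equals $U \times U^- \times \bar T_Y$, where $\bar T_Y := Y \cap \bar T \cap X_0$. By the bijection between $G \times G$-orbits of $X$ and $T$-orbits of $\bar T \cap X_0$ recalled just before the lemma, $\bar T_Y$ is the closure of a single $T$-orbit, and the $G \times G$-orbits of $Y$ correspond bijectively to the $T$-orbits of $\bar T_Y$. Removing the smaller orbit closures leaves the open $T$-orbit, which is a quotient torus $T/H$ for some subgroup $H$ of $T$. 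It follows that
$$ Y^* \cong U \times U^- \times (T/H), $$
which is affine as a product of affine varieties.

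Finally, as a closed subvariety of the projective $X$ the variety $Y$ is projective, and it is normal by the results of \cite{HT2}. Applying Goodman's criterion to the affine open $Y^* \subset Y$ yields the two stated properties of $Y \setminus Y^*$, which is precisely the set appearing in the lemma. I expect the step requiring the most care to be the identification of $Y \cap X_0$ with $U \times U^- \times \bar T_Y$ in a way that matches the open $G \times G$-orbit of $Y$ with $U \times U^- \times (T/H)$; once this toroidal local structure is verified, everything else is bookkeeping or direct invocation of known results.
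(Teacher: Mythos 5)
Your identification of $Y^* := Y\setminus\bigl(\bigcup_{j\notin K} X_j \cup \bigcup_{i\in\D}(1,w_0)D_i\bigr)$ as the affine open subset $Y_0\cap X_0\cong U\times U^-\times(\text{open }T\text{-orbit})$ is correct and matches what the paper does, and the deduction of pure codimension one from affineness of the complement (via Hartshorne, {\it Ample subvarieties}, Prop.~3.1) is also exactly the paper's argument.

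However, the ampleness half of your plan rests on a version of Goodman's theorem that is not actually a theorem. Goodman's criterion (Hartshorne, {\it Ample subvarieties}, Thm.~II.6.1) says that $U$ is affine if and only if there is a blow-up $\pi:W'\to W$ with center in $W\setminus U$ such that $W'\setminus\pi^{-1}(U)$ supports an effective ample divisor \emph{on $W'$}; in general the blow-up cannot be dispensed with, and $W\setminus U$ need not support any ample divisor on $W$ itself. So "affine complement $\Rightarrow$ boundary supports an ample effective Cartier divisor" is a genuine gap, not a known fact you can invoke. The paper gets around this by using the $G\times G$-structure: choose a $G\times G$-linearized very ample line bundle $\cl$ on $X$, take a nonzero $B\times B^-$-semi-invariant $v\in{\rm H}^0(Y,\cl)$, observe that its divisor of zeros is an effective divisor in an ample linear system on $Y$ whose support is a union of $B\times B^-$-stable divisors, and then use that $Y^*=Y_0\cap X_0$ is a single $B\times B^-$-orbit to conclude that $v$ cannot vanish there, hence ${\rm div}(v)\subseteq Y\setminus Y^*$. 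This representation-theoretic step is the content you are missing; without it, ampleness does not follow from affineness alone.
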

\begin{proof}
Let $X^K= \cup_{j \notin K} X_j$. We claim that
$Y \setminus X^K$ coincides with the open
$G \times G$-orbit $Y_0$ of $Y$. Clearly
$Y_0$ is contained in $Y \setminus X^K$.
On the other hand, let $U$ be a
$G \times G$-orbit in $Y \setminus X^K$.
Then $X_j$ contains $U$ if and only if
$j \notin K$. But every $G \times G$-orbit
closure in $X$ is the intersection of those $X_j$
which contain it \cite[Prop.6.2.3]{BK}.
It follows that the closure of $Y_0$ and $U$
coincide and thus $U = Y_0$.

As $X$ is normal we may choose a
$G \times G$-linearized very ample line bundle $\cl$
on $X$. Then ${\rm H}^0(Y,\cl)$ is a finite 
dimensional (nonzero) representation of 
$G \times G$, and it thus contains a 
nonzero element $v$ which is
$B \times B^-$-invariant up to constants.
The support of $v$ is then the union of
$B \times B^-$-invariant divisors on $Y$.
As $Y_0 \cap (\bar{T} \cap  X_0)$ is a single $T \times T$-orbit
it follows that
$$Y_0 \cap X_0 \simeq U \times U^- \times  (Y_0 \cap (\bar{T} \cap  X_0)),$$
is an affine variety and a single $B \times B^-$-orbit. In particular,
the support of $v$ is contained in
$$ Y \setminus (Y_0 \cap X_0) = Y \cap
(X^K \cup \bigcup_{i \in \D} (1,w_0) D_i).$$
This shows the second part of the statement.
The first part follows as $Y_0 \cap X_0$
is affine \cite[Prop.3.1]{Har}.
\end{proof}

Let now $X$ denote a smooth projective toroidal
embedding of $G$. As the line bundles $\co_X
(D_i)$ and $\co_X (\tilde{D}_i)$ are isomorphic 
it follows by \cite[Prop.6.2.6]{BK} that
\begin{equation}
\label{dua} 
\omega_X^{-1} \simeq \co_X \big(\sum_{i \in \D} (D_i+ \tilde{D}_i)
+ \sum_{j=1}^nX_j\big). 
\end{equation}
Recall that a $X$ is normal and $G$ is semisimple
and simply connected, any line bundle on $X$ will
admit a unique $G_{\rm sc}^2 = G_{\rm sc} \times G_{\rm sc}$-linearization.
In particular, if we let $\tau_i$ denote the canonical 
section of the line bundle $\co_X(D_i)$, then we may 
consider $\tau_i$
as a $B_{\rm sc}^2 = B_{\rm sc} \times B_{\rm sc}$-eigenvector of the space
of global sections of $\co_X(D_i)$. As in the proof 
of \cite[Prop.6.1.11]{BK} we find that the associated
weight of $\tau_i$ equals $\omega_i \boxtimes -w_0 \omega_i$,
where $\omega_i$ denotes the $i$-th fundamental weight.
Similarly, we may consider the canonical section $\sigma_j$
of $\co_X(X_j)$ as a $G_{\rm sc}^2$-invariant
element.

Let $V$ denote a $B \times B$-orbit closure in
$X$. As $V$ is $B \times B$-stable the subset 
$Y = (G \times G) \cdot V$ is closed in $X$. Thus
we may consider $Y$ as the smallest 
$G \times G$-invariant subvariety of $X$ 
containing $V$. Now define $K$ as in Lemma 
\ref{ample} and let $\mathcal M$ denote the
line bundle
$$\cm = \co_X \big((p-1)(\sum_{i \in \D} \tilde{D}_i +
\sum_{j \notin K} X_j )\big).$$
By Equation (\ref{dua}) and  Lemma \ref{duality2}
it then follows that multiplication with $\tau_i^{p-1}$,
for $i \in \Delta$, and $\sigma_j^{p-1}$, for $j \in K$, 
defines a morphism of 
$B_{\rm sc}^2$-linearized line bundles
$$ \mathcal M  \rightarrow \mathcal
End^!_F\big(X,\{ D_i , X_j\}_{i \in \D, j \in K}
\big )\otimes k_{\lambda \boxtimes \lambda},$$
where $\lambda = (1-p)\rho$.
By \cite[Prop.6.5]{HT2} and Lemma \ref{compatible}
any element in $\mathcal End^!_F(X)$
which is compatible with
the closed subvarieties $D_i$, $i \in \D$, and $X_j$, $j \in K$,
is also compatible with $V$ and $Y$.
In particular, we have defined a 
$B_{\rm sc}^2$-equivariant map
\begin{equation}
\label{eta}
\eta : \mathcal M
\rightarrow \mathcal End^!_F(X,Y,V\big )\otimes
k_{\lambda \boxtimes \lambda},
\end{equation}
which, by Lemma  \ref{duality1}, is the same as a 
$B_{\rm sc}^2$-invariant
element $\eta'$ 
in $\End_F^\cm\big(X,Y,V \big) \otimes 
k_{\lambda \boxtimes \lambda}$. In particular,
this defines us an element 
\begin{equation}
\label{v} 
v \in {\rm Ind}_{B_{\rm sc}^2}^{G_{\rm sc}^2}
\big(\End_F^\cm\big(X,Y,V \big) \otimes 
k_{\lambda \boxtimes \lambda} \big),
\end{equation}
which is $G_{\rm sc}^2$-invariant.
We are then ready to use the ideas explained 
in Section \ref{invar}. First we use  
(\ref{invarsect}) to construct a morphism
\begin{equation}
\label{invarsect2}
\End_F^{\cl \boxtimes \cl} \big(
(\nicefrac{G_{\rm sc}}{B_{\rm sc}})^2 \big) \otimes 
\cm(X)  \rightarrow \End_F\big( G_{\rm sc}^2 
\times_{B_{\rm sc}^2}
X \big),
\end{equation}
$$ (u, \sigma) \mapsto \Phi_{\cm,\lambda
\boxtimes \lambda}(u \otimes v \otimes \sigma),$$  
where $\cl$ is the $G_{\rm sc}$-linearized line 
bundle on $\nicefrac{G_{\rm sc}}{B_{\rm sc}}$ associated
to the character $\lambda = (1-p)\rho$. Notice 
that we here have used that $\cm(X)$ is a 
$G_{\rm sc}^2$-module. 


\begin{lem}
\label{lem11}
There exists a $G_{\rm sc}^2$-equivariant
map 
\begin{equation}
\label{global}
 {\rm St} \boxtimes {\rm St}
\rightarrow \cm(X),  
\end{equation}
which maps the $B_{\rm sc}^- \times B_{\rm sc}^-$-invariant
line in ${\rm St} \boxtimes {\rm St}$ to a nonzero  multiple
of the global section 
$$\tilde \sigma = \prod_{i \in \D} \tilde \tau_i^{p-1} \prod_{j \notin K} \sigma_j^{p-1}
\in \cm(X)
,$$ 
where $\tilde{\tau}_i$ denotes the canonical
section of $\co_X(\tilde{D_i})$.
\end{lem}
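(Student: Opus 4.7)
The plan is to apply the universal property of the Weyl module to the \emph{untranslated} section
\[
\sigma' := \prod_{i\in\D}\tau_i^{p-1}\prod_{j\notin K}\sigma_j^{p-1}\in\cm(X),
\]
which differs from $\tilde\sigma$ only by the $G_{\rm sc}^2$-action of a lift of $(w_0,w_0)$. First I would verify directly that $\sigma'$ is a $U_{\rm sc}^2$-invariant $T_{\rm sc}^2$-eigenvector of weight $((p-1)\rho,(p-1)\rho)$. This is immediate from the description already made available in the paper: each $\sigma_j$ is $G_{\rm sc}^2$-invariant, each $\tau_i$ is a $B_{\rm sc}^2$-eigenvector of weight $(\omega_i,-w_0\omega_i)$, $\sum_i\omega_i=\rho$, and $w_0\rho=-\rho$, so the two $T_{\rm sc}$-components of the weight of $\sigma'$ are each equal to $(p-1)\rho$.

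Next I would invoke the fact that ${\rm St}$, being simple and self-dual with highest weight $(p-1)\rho$, coincides with the Weyl module $V((p-1)\rho)$; hence ${\rm St}\boxtimes {\rm St}$ is the Weyl module of $G_{\rm sc}\times G_{\rm sc}$ of highest weight $((p-1)\rho,(p-1)\rho)$. By its universal property---$G_{\rm sc}^2$-equivariant maps out of it into a $G_{\rm sc}^2$-module correspond bijectively to $U_{\rm sc}^2$-invariant $T_{\rm sc}^2$-eigenvectors in the target of weight $((p-1)\rho,(p-1)\rho)$---the element $\sigma'$ determines a unique $G_{\rm sc}^2$-equivariant morphism
\[
\phi:{\rm St}\boxtimes{\rm St}\longrightarrow\cm(X)
\]
sending the highest weight generator $v_+\boxtimes v_+$ to $\sigma'$.

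To finish, I would evaluate $\phi$ on the $(B_{\rm sc}^-)^2$-invariant line. Fix a lift $\dot w_0\in N_{G_{\rm sc}}(T_{\rm sc})$ of $w_0$ and let $v_-$ span the $B_{\rm sc}^-$-invariant line in ${\rm St}$, so that $\dot w_0\cdot v_+=c\,v_-$ for some $c\in k^\times$. By the very definition of $\tilde\tau_i$ as the canonical section of $\co_X(\tilde D_i)=\co_X((w_0,w_0)D_i)$ we may normalise so that $\tilde\tau_i=(\dot w_0,\dot w_0)\cdot \tau_i$; together with $G_{\rm sc}^2$-invariance of the $\sigma_j$ this yields $(\dot w_0,\dot w_0)\cdot\sigma'=\tilde\sigma$. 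Equivariance of $\phi$ then gives
\[
\phi(v_-\boxtimes v_-)=c^{-2}(\dot w_0,\dot w_0)\cdot\sigma'=c^{-2}\,\tilde\sigma,
\]
a nonzero scalar multiple of $\tilde\sigma$ as required.

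The only delicate point is the bookkeeping of conventions: namely, that in the paper's normalization ${\rm St}={\rm Ind}_B^G((1-p)\rho)$ is genuinely the Weyl module $V((p-1)\rho)$ (so that the universal property is available) and that the $(B_{\rm sc}^-)^2$-invariant line in ${\rm St}\boxtimes {\rm St}$ is precisely the lowest weight line of weight $((1-p)\rho,(1-p)\rho)$, matching the weight of $\tilde\sigma$ computed from $\tilde\tau_i=(\dot w_0,\dot w_0)\cdot\tau_i$. These are standard properties of the Steinberg module and do not form a real obstacle, since the statement asks only for a nonzero multiple of $\tilde\sigma$, so the various proportionality scalars can be absorbed.
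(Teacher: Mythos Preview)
Your proof is correct and essentially the same as the paper's. Both arguments identify the same highest weight vector $\sigma'=\prod_i\tau_i^{p-1}\prod_{j\notin K}\sigma_j^{p-1}$ in $\cm(X)$ and use it to produce the map; the paper phrases the universal property via Frobenius reciprocity together with self-duality of ${\rm St}$, whereas you phrase it as the Weyl module universal property, and you are slightly more explicit than the paper in verifying, via the $(\dot w_0,\dot w_0)$-translation, that the lowest weight line lands on a nonzero multiple of $\tilde\sigma$.
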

\begin{proof}
As $\co_X(\tilde{D}_i)$ and $\co_X({D}_i)$ are 
isomorphic as line bundles we may consider the
element 
$$ \sigma = \prod_{i \in \D} \tau_i^{p-1} \prod_{j \notin K} \sigma_j^{p-1}$$
as a global section of $\mathcal M$.
Then $\sigma$ is a $B_{\rm sc}^2$-eigenvector
in $\cm(X)$ of weight $(p-1) \rho \boxtimes (p-1)\rho$.
In particular, $\sigma$ induces a $B_{\rm sc} \times 
B_{\rm sc}$-equivariant map 
$$ k_{(p-1) \rho} \boxtimes k_{(p-1)\rho}
\rightarrow  \cm(X).$$
Applying Frobenius reciprocity and the selfduality
of the Steinberg module $\rm St$, this defines the
desired map
$$ {\rm St} \boxtimes {\rm St} \rightarrow
\cm(X),$$
with the stated properties.
\end{proof}

Combining the map (\ref {invarsect2}) with the map 
(\ref{global}) in Lemma \ref{lem11} we obtain a $G_{\rm sc}^2$-equivariant 
map 
\begin{equation}
\label{invarsect3}
\Theta : \End_F^{\cl \boxtimes \cl} \big(
(\nicefrac{G_{\rm sc}}{B_{\rm sc}})^2 \big) \otimes 
\big( {\rm St} \boxtimes {\rm St} \big) 
  \rightarrow \End_F\big(G_{\rm sc}^2
\times_{B_{\rm sc}^2}
X \big),
\end{equation}
We will now study when the map (\ref{invarsect3})
describes a Frobenius splitting of $G_{\rm sc}^2
\times_{B_{\rm sc}^2} X$. Consider the 
$G_{\rm sc}^2$-equivariant  map
\begin{equation} 
\label{section}
\cm(X) \rightarrow {\rm St} \boxtimes 
{\rm St},
\end{equation} 
$$ \sigma \mapsto \Phi^2_{\cm,\lambda
\boxtimes \lambda}(v \otimes \sigma),$$  
defined as the map (\ref{invarsect1}) in Section 
\ref{invar}. We claim

\begin{lem}
\label{lem2}
The composition of the map (\ref{global})
in Lemma \ref{lem11} and the map in (\ref{section})
is an isomorphism on $  {\rm St} \boxtimes {\rm St}$.
\end{lem}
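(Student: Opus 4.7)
My plan is to exploit the irreducibility of ${\rm St}\boxtimes{\rm St}$ as a $G_{\rm sc}^2$-module and apply Schur's lemma. Both (\ref{global}) and (\ref{section}) are $G_{\rm sc}^2$-equivariant, and since ${\rm St}$ is a simple $G_{\rm sc}$-module (cf.\ Section \ref{canonical}), so is ${\rm St}\boxtimes{\rm St}$ as a $G_{\rm sc}^2$-module. Hence the composition is a $G_{\rm sc}^2$-equivariant endomorphism of a simple module, and is therefore either zero or a scalar isomorphism. It suffices to exhibit one vector on which the composition is nonzero.

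I would evaluate on the lowest weight line of ${\rm St}\boxtimes{\rm St}$, i.e.\ the one-dimensional $B_{\rm sc}^-\times B_{\rm sc}^-$-invariant subspace of weight $(1-p)\rho\boxtimes(1-p)\rho$. By Lemma \ref{lem11}, the map (\ref{global}) sends this line onto a nonzero scalar multiple of $\tilde\sigma$, so the task reduces to showing $\Phi^2_{\cm,\lambda\boxtimes\lambda}(v\otimes\tilde\sigma)\neq 0$ in ${\rm St}\boxtimes{\rm St}$.

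Since $X$ is projective, $k[X']=k$, and the target of (\ref{section}) is indeed ${\rm St}\boxtimes{\rm St}={\rm Ind}_{B_{\rm sc}^2}^{G_{\rm sc}^2}(k_{\lambda\boxtimes\lambda})$. By $G_{\rm sc}^2$-equivariance the image of $\tilde\sigma$ under (\ref{section}) must lie in the one-dimensional lowest weight line of the target, and tracing the construction (or equivalently, evaluating the induced function at the identity of $G_{\rm sc}^2$) identifies the coefficient of $\tilde\sigma$'s image on this line with the scalar $v(\tilde\sigma)\in k=k[X']$ obtained by applying $v$ to $\tilde\sigma$. Thus the problem reduces to showing $v(\tilde\sigma)\neq 0$.

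Finally, I would unwind $v$ via (\ref{eta}) and Lemma \ref{duality1}: the element $v$ corresponds to the $\mathcal O_X$-linear map $\eta\colon \cm\to\omega_X^{1-p}=\mathcal End_F^!(X)$ given by multiplication with $\prod_{i\in\Delta}\tau_i^{p-1}\prod_{j\in K}\sigma_j^{p-1}$, and $v(\tilde\sigma)$ is ${\rm ev}_X$ applied to the global section
$$\eta(\tilde\sigma)=\Big(\prod_{i\in\Delta}\tau_i\tilde\tau_i\prod_{j=1}^n\sigma_j\Big)^{p-1}$$
of $\omega_X^{1-p}$. But this section is, up to a nonzero constant, precisely the one used in \cite[Prop.7.1]{HT2} (cited as Theorem \ref{can-thm}) to define the $B_{\rm sc}\times B_{\rm sc}$-canonical Frobenius splitting of $X$, whose image under ${\rm ev}_X$ is by definition the constant $1\in k[X']=k$. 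Hence $v(\tilde\sigma)\neq 0$, and Schur's lemma completes the argument. The principal obstacle is the bookkeeping with the duality of Section \ref{duality} in order to match $v(\tilde\sigma)$ with the evaluation of the canonical Frobenius splitting; once this identification is in place, only Schur's lemma is needed.
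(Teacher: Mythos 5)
Your proof is correct and follows essentially the same route as the paper's: reduce to nonvanishing of the composition via simplicity of ${\rm St}\boxtimes{\rm St}$ (the paper phrases this as Frobenius reciprocity, you as Schur's lemma — the same fact), evaluate on the lowest weight line to reduce to $\Phi^2_{\cm,\lambda\boxtimes\lambda}(v\otimes\tilde\sigma)\neq 0$, and then observe that $\eta(\tilde\sigma)=\bigl(\prod_{i\in\D}\tau_i\tilde\tau_i\prod_{j=1}^n\sigma_j\bigr)^{p-1}$ is a Frobenius splitting of $X$ and hence evaluates to $1$. The only cosmetic difference is that the paper invokes \cite[proof of Thm.~6.2.7]{BK} directly for the Frobenius splitting property of this section, while you route the observation through the canonical splitting of Theorem \ref{can-thm}; this is the same underlying fact.
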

\begin{proof}
By Frobenius reciprocity it suffices to show that the described 
composed map is nonzero. In particular, 
it suffices to show that 
$$\Phi^2_{\cm,\lambda
\boxtimes \lambda}(v \otimes \tilde \sigma) \neq 0,$$
where $\tilde{\sigma}$ denotes the
global section of $\cm$ defined in 
Lemma \ref{lem11}.
 For this we use the fact that the global section 
$$ \big(\prod_{i \in \D} (\tau_i \tilde{\tau}_i)
\prod_{j =1}^n \sigma_j \big)^{p-1},$$
of $\omega_X^{1-p}$ defines a Frobenius 
splitting of $X$ (see e.g. \cite[proof
of Thm.6.2.7]{BK}). As a consequence 
$\eta(\tilde \sigma)$ is a Frobenius splitting 
of $X$, where $\eta$ is the map
defined in (\ref{eta}). Equivalently , the 
natural $G_{\rm sc}^2$-equivariant morphism 
$$ \End^\cm_F(X) \otimes \cm(X) \rightarrow
k[X'] = k,$$
defined in (\ref{map1}), will map $\eta' \otimes
\tilde{\sigma}$ to $1$. This induces a commutative 
diagram
\begin{equation}
\label{comm5}
\xymatrix{
{\rm Ind}_{B_{\rm sc}^2}^{G_{\rm sc}^2}
\big(\End_F^\cm\big(X \big) \otimes 
k_{\lambda \boxtimes \lambda} \big) \otimes \cm (X) \ar[d]
\ar[rr]^(0.7){\Phi^2_{\cm,\lambda
\boxtimes \lambda}} \ar[rrd]&  & {\rm St} \otimes {\rm St} \ar[d]\\
\End^\cm_F(X) \otimes  k_{\lambda \boxtimes \lambda} \otimes \cm(X) 
\ar[rr]
& & k_{\lambda \boxtimes \lambda}  \\
}
\end{equation}
where the image of $v \otimes \tilde \sigma$ under
the diagonal map is nonzero. This ends the proof. 
\end{proof}

\begin{prop}
\label{propintro}
Let $\Theta$ denote the map defined in
(\ref{invarsect3}). The image $\Theta(\nu)$ 
of an element $\nu$ defines, up to a nonzero
constant, a Frobenius splitting
of $G_{\rm sc}^2 \times_{B_{\rm sc}^2} X$ 
if and only if  the image of $\nu$ under the map
\begin{equation} 
\label{phi2}
\phi_{\lambda \boxtimes \lambda} :
 \End_F^{\cl \boxtimes \cl} \big(
(\nicefrac{G_{\rm sc}}{B_{\rm sc}})^2 \big) \otimes 
\big( {\rm St} \boxtimes {\rm St} \big) \rightarrow k,
\end{equation}
defined in Section \ref{fsplitconclusion}, 
is nonzero. 
\end{prop}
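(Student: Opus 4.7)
The plan is to invoke Proposition \ref{testFsplit} for $\Phi_{\cm,\lambda\boxtimes\lambda}$ evaluated at $\Xi := u\otimes v\otimes \iota(w)$, where $\nu = u\otimes w$, $v$ is the $G_{\rm sc}^2$-invariant element from (\ref{v}), and $\iota \colon {\rm St}\boxtimes{\rm St} \to \cm(X)$ denotes the map produced in Lemma \ref{lem11}. By the very definition of $\Theta$ in (\ref{invarsect3}) we have $\Theta(\nu) = \Phi_{\cm,\lambda\boxtimes\lambda}(\Xi)$. Since $X$ is projective and irreducible, $k[X'] = k$, so $\Phi^2_{\cm,\lambda\boxtimes\lambda}$ factors through ${\rm Ind}_{B_{\rm sc}^2}^{G_{\rm sc}^2}(k_{\lambda\boxtimes\lambda})$; in particular $({\rm Id}\otimes \Phi^2_{\cm,\lambda\boxtimes\lambda})(\Xi)$ lies in $\End_F^{\cl\boxtimes\cl}((\nicefrac{G_{\rm sc}}{B_{\rm sc}})^2)\otimes {\rm Ind}_{B_{\rm sc}^2}^{G_{\rm sc}^2}(k_{\lambda\boxtimes\lambda})$, so the hypothesis of Proposition \ref{testFsplit} is automatic and the question reduces to evaluating $\phi_{\lambda\boxtimes\lambda}$ on this image.

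Next I would analyze the $G_{\rm sc}^2$-equivariant endomorphism
$$\varepsilon \colon {\rm St}\boxtimes{\rm St} \xrightarrow{\iota} \cm(X) \xrightarrow{\Phi^2_{\cm,\lambda\boxtimes\lambda}(v\otimes -)} {\rm St}\boxtimes{\rm St}.$$
Because ${\rm St}$ is a simple $G_{\rm sc}$-module over the algebraically closed field $k$, the outer tensor product ${\rm St}\boxtimes{\rm St}$ is a simple $G_{\rm sc}^2$-module, so Schur's lemma forces $\varepsilon$ to be multiplication by some scalar $c \in k$. By Lemma \ref{lem2} the map $\varepsilon$ is an isomorphism, hence $c\neq 0$.

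Combining the two steps yields
$$({\rm Id}\otimes \Phi^2_{\cm,\lambda\boxtimes\lambda})(u\otimes v\otimes\iota(w)) \;=\; c\cdot (u\otimes w),$$
so that $\phi_{\lambda\boxtimes\lambda}$ applied to this image equals $c\cdot\phi_{\lambda\boxtimes\lambda}(\nu)$. By Proposition \ref{testFsplit}, $\Theta(\nu)$ is a Frobenius splitting of $Z = G_{\rm sc}^2\times_{B_{\rm sc}^2} X$ precisely when this scalar equals $1$; after rescaling, a nonzero constant multiple of $\Theta(\nu)$ is a Frobenius splitting if and only if $\phi_{\lambda\boxtimes\lambda}(\nu)\neq 0$, which is the desired conclusion. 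The only genuine subtlety is the nonvanishing of $c$, and this was already arranged in Lemma \ref{lem2} using the explicit Frobenius splitting of $X$ furnished by $(\prod_{i\in\D}\tau_i\tilde\tau_i\prod_{j}\sigma_j)^{p-1}$; everything else is a bookkeeping exercise based on the framework developed in Section \ref{fsplitconclusion}.
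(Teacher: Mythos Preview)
Your proof is correct and follows the same approach as the paper, whose proof consists of the single line ``Apply Proposition \ref{testFsplit} and Lemma \ref{lem2}.'' You have simply unpacked these two citations: the reduction via Proposition \ref{testFsplit} (using $k[X']=k$ since $X$ is projective and irreducible), and the identification of the composite $\varepsilon$ with a nonzero scalar via Lemma \ref{lem2} together with Schur's lemma for the simple $G_{\rm sc}^2$-module ${\rm St}\boxtimes{\rm St}$. The only cosmetic point is that you write $\nu = u\otimes w$ as though it were a simple tensor, but since all maps involved are linear this is harmless.
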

\begin{proof}
Apply Proposition \ref{testFsplit} and Lemma 
\ref{lem2}.

\end{proof}

With the identification $\End_F^{\cl \boxtimes \cl} 
\big((\nicefrac{G_{\rm sc}}{B_{\rm sc}})^2 \big)
\simeq {\rm St} \boxtimes {\rm St}$ the 
map $\phi_{\lambda \boxtimes \lambda}$,
defined in (\ref{phi2}), must necessarily
(up to a nonzero constant) be
the 
$G_{\rm sc}^2$-invariant form on ${\rm St} 
\boxtimes {\rm St}$  mentioned in Section
\ref{61}. Let $v_\Delta$ denote the 
$\diag(G)$-invariant element in 
$\End_F^{\cl \boxtimes \cl} \big(
(\nicefrac{G_{\rm sc}}{B_{\rm sc}})^2 \big)$ 
defined in  Section \ref{well known}.  
Then the $\diag(G)$-equivariant 
map 
$$ {\rm St} \otimes {\rm St} \rightarrow 
k,$$
$$ \nu \mapsto \phi_{\lambda \boxtimes \lambda}(
v_\Delta \otimes \nu),$$
is nonzero and thus it must coincide (up to a 
nonzero constant) with the 
$G_{\rm sc}$-invariant form $\phi$ on 
${\rm St}$ defined in (\ref{form}).

\begin{prop}
\label{ample2}
Fix notation as above and let $D$ denote the
effective Cartier divisor
$$(p-1)\big(\sum_{i \in \D} (1,w_0) {D_i} +
\sum_{j \notin K} X_j \big),$$
on $X$. Then
$X$ admits a Frobenius $D$-splitting which
is compatible with the subvariety
$Y$ and the $G$-Schubert variety 
$\diag(G) \cdot V$.
\end{prop}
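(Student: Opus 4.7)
The plan is to construct a Frobenius splitting $S$ of $\tilde Z = G_{\rm sc}^2 \times_{B_{\rm sc}^2} X$ via the map $\Theta$ of (\ref{invarsect3}), restrict it to $Z = \diag(G) \times_{\diag(B)} X$, and push it forward to $X$ along the projective morphism $\pi_Z : Z \to X$, $[g,x] \mapsto g\cdot x$. The input to $\Theta$ is chosen so that the resulting splitting $s_X$ on $X$ factorizes through multiplication by a section of $\cm$ whose zero divisor is precisely $D$.

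Take $\nu = v_\Delta \otimes w$, where $v_\Delta \in \End_F^{\cl \boxtimes \cl}((\nicefrac{G_{\rm sc}}{B_{\rm sc}})^2) \simeq {\rm St} \boxtimes {\rm St}$ is the $\diag(G_{\rm sc})$-invariant element from Section \ref{well known}, and $w = v_+ \otimes v_- \in {\rm St} \boxtimes {\rm St}$ is the tensor of the highest and lowest weight vectors. Under self-duality $v_\Delta$ corresponds (up to scalar) to the identity in $\End({\rm St})$, so it has a nonzero $((p-1)\rho, -(p-1)\rho)$-weight component; consequently $\phi_{\lambda \boxtimes \lambda}(\nu) \neq 0$, and Proposition \ref{propintro} makes $S := \Theta(\nu)$ a Frobenius splitting of $\tilde Z$ up to a nonzero scalar. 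Lemma \ref{diag} together with Theorem \ref{thm11}(2) gives compatibility of $S$ with $\diag(G_{\rm sc}) \times_{\diag(B_{\rm sc})} X$, while the construction of $v$ in (\ref{v}) combined with Theorem \ref{thm11}(1) gives compatibility with $G_{\rm sc}^2 \times_{B_{\rm sc}^2} Y$ and $G_{\rm sc}^2 \times_{B_{\rm sc}^2} V$. Lemma \ref{compatible} then furnishes compatibility with the intersections $\diag(G_{\rm sc}) \times_{\diag(B_{\rm sc})} Y$ and $\diag(G_{\rm sc}) \times_{\diag(B_{\rm sc})} V$. The isomorphism $Z \simeq \nicefrac{G}{B} \times X$ via $[g,x] \mapsto (gB, g\cdot x)$ identifies $\pi_Z$ with the second projection, hence $(\pi_Z)_* \co_Z = \co_X$; by Lemma \ref{lem push forward} the restriction $s_Z := S|_Z$ pushes forward to a Frobenius splitting $s_X := (\pi_Z)'_* s_Z$ of $X$ compatible with $\diag(G) \cdot V$ and $Y$.

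The main obstacle is to verify that $s_X$ is actually a Frobenius $D$-splitting. The first factorization in Theorem \ref{thm11} writes $S$ as $t \circ (F_{\tilde Z})_* \hat w$ for some $t : (F_{\tilde Z})_* \cm_{\tilde Z} \to \co_{\tilde Z'}$, where $\hat w \in \cm(X)$ is the image of $w$ under the $G_{\rm sc}^2$-equivariant morphism (\ref{global}) of Lemma \ref{lem11}. By $G_{\rm sc}^2$-equivariance of (\ref{global}) (together with $v_+ \propto \dot w_0^{-1} v_-$), $\hat w$ is a nonzero scalar multiple of $(\dot w_0^{-1}, 1) \tilde\sigma$. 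Using $(\dot w_0^{-1}, 1)(\dot w_0, \dot w_0) = (e, \dot w_0)$ and the $G \times G$-stability of each $X_j$, the zero divisor of $\hat w$ equals
$$ (p-1)\Big(\sum_{i \in \Delta} (\dot w_0^{-1},1)\tilde D_i + \sum_{j \notin K} X_j\Big) = (p-1)\Big(\sum_{i \in \Delta} (1,\dot w_0) D_i + \sum_{j \notin K} X_j\Big) = D. $$
Under the isomorphism $\tilde Z \simeq (\nicefrac{G}{B})^2 \times X$ induced by the $G_{\rm sc}^2$-linearization of $\cm$, the section $\hat w$ corresponds to $p_X^* \hat w$; its restriction to $Z \simeq \nicefrac{G}{B} \times X$ is $\pi_Z^* \hat w$, so $s_Z$ factorizes through $(F_Z)_* \pi_Z^* \hat w$. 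Pushing forward along $\pi_Z$, the projection formula combined with $(\pi_Z)_* \co_Z = \co_X$ converts this into $s_X = s^D \circ (F_X)_* \hat w$ for some $s^D \in \End_F^\cm(X)$, exhibiting $s_X$ as a Frobenius $D$-splitting. The requirement of Lemma \ref{lemdsplit}(ii) that $\mathrm{supp}(D)$ not contain irreducible components of $Y$ or $\diag(G) \cdot V$ is verified by Lemma \ref{ample}.
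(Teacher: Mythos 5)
Your proposal follows the paper's strategy in all its essentials: build the splitting $S = \Theta(v_\Delta \otimes (v_+ \otimes v_-))$ on $G_{\rm sc}^2 \times_{B_{\rm sc}^2} X$, invoke Proposition \ref{propintro} together with Theorem \ref{thm11} and Lemma \ref{diag} for the factorization and the compatibility statements, compute the zero divisor of the section $\hat w$ via $G_{\rm sc}^2$-equivariance and $v_+ \propto \dot{w}_0^{-1}v_-$, and finally push forward to $X$. The one structural deviation is that you first restrict $S$ to $Z = \diag(G) \times_{\diag(B)} X$ and then push forward via $\pi_Z$, while the paper applies $(p_X)_*$ directly from $G_{\rm sc}^2 \times_{B_{\rm sc}^2} X \simeq (\nicefrac{G_{\rm sc}}{B_{\rm sc}})^2 \times X$ using the projection to the third factor. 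Both routes produce Frobenius $D$-splittings with the required compatibilities; yours incurs the extra step of Lemma \ref{compatible}, while the paper's is a single push-forward.

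There is, however, a genuine gap in your final step. You invoke Lemma \ref{lemdsplit}(ii) to upgrade the compatibility of the splitting $s_X$ to compatibility of the map $s^D$, and assert that Lemma \ref{ample} verifies that ${\rm supp}(D)$ contains no irreducible component of $Y$ \emph{or} of $\diag(G)\cdot V$. Lemma \ref{ample} is a statement about the $G\times G$-orbit closure $Y$ only; it says nothing directly about the $\diag(G)$-stable subvariety $\diag(G)\cdot V \subset Y$, and it is not immediate that the latter is not contained in some $(1,w_0)D_i$. The clean fix (which is what the paper does) is to avoid Lemma \ref{lemdsplit} altogether: carry the factorization $(F_{\tilde Z})_*\hat w$ followed by $s_1$ through the push-forward, and apply Lemma \ref{lem push forward} directly to $s_1$ (or to $s_1|_Z$); this gives that the pushed-forward second factor $s^D$ is compatible with $Y$ and $\diag(G)\cdot V$, which is precisely the definition of the $D$-splitting being compatible. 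The support condition for $\diag(G)\cdot V$ then follows \emph{as a consequence} via Lemma \ref{lemdsplit}, rather than being an input. Separately, a small sign slip: the component of $v_\Delta$ pairing nontrivially with $v_+\otimes v_-$ under the $G_{\rm sc}^2$-invariant form is the one of weight $(-(p-1)\rho,(p-1)\rho)$, not $((p-1)\rho,-(p-1)\rho)$; since $v_\Delta$ has nonzero components of every weight $(\mu,-\mu)$ this does not affect your conclusion.
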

\begin{proof}
Consider the $\diag(G)$-equivariant morphism
$$ \Theta_\Delta : {\rm St} \boxtimes {\rm St}
\rightarrow \End_F\big(G_{\rm sc}^2 
\times_{B_{\rm sc}^2} X \big),$$
$$ \nu \mapsto \Theta(v_\Delta \otimes \nu), $$
where  $\Theta$ is the map in (\ref{invarsect3}).
By Lemma \ref{propintro} 
the image $\Theta_\Delta(\nu)$ of an element $\nu \in {\rm St} 
\otimes {\rm St} $ is a Frobenius splitting,
up to a nonzero constant, if and only if 
$\phi(\nu)$ is nonzero. Here $\phi$ is the 
the map defined in (\ref{form}). 

Let $v_+$ (resp. $v_-$) denote a nonzero
$B$ (resp. $B^-$)-eigenvector of $ {\rm St}$
and let $\nu = v_+ \otimes v_-$. After 
possibly multiplying $v_+$ with a constant
we may assume  that $s= \Theta_\Delta (\nu) $ defines a
Frobenius splitting of  
$Z= G_{\rm sc}^2 \times_{B_{\rm sc}^2} X$.  As 
$v$ is compatible with $Y$ and $V$ 
(cf. (\ref{v})) it follows by Theorem
\ref{thm11} and Lemma \ref{diag} 
that $s $ factorizes as
\begin{equation}
\label{decomp2}
s : (F_Z)_* \co_Z \xrightarrow{(F_Z)_* \sigma} (F_Z)_* 
\cm_Z  \xrightarrow{s_1} \co_{Z'},
\end{equation}
where $s_1$ is compatible with the subvarieties
$ G_{\rm sc}^2 \times_{B_{\rm sc}^2} V$, 
$G_{\rm sc}^2 \times_{B_{\rm sc}^2} Y$ and 
$\diag(G_{\rm sc}) \times_{\diag(B_{\rm sc})}
X$. Here $\cm_Z$ is the $G_{\rm sc}^2$-linearized
line bundle on $Z$ associated with the 
$B_{\rm sc}^2$-linearized line bundle $\cm$ on
$X$ as explained in Section \ref{intr},
and $\sigma$ is the global section of $\cm_Z$ 
defined as the image of $\nu$ under the 
map (\ref{global}) in Lemma \ref{lem11}.
Notice that as $\cm$ is a $G_{\rm sc}^2$-linearized 
line bundle on the $G_{\rm sc}^2$-variety $X$ we
may identify the global sections of $\cm$ and
$\cm_Z$. Actually , as $X$ is a $G_{\rm sc}^2$-variety
the morphism
$$ G_{\rm sc}^2 \times_{B_{\rm sc}^2} X 
\rightarrow \nicefrac{G_{\rm sc}}{B_{\rm sc}} \times
\nicefrac{G_{\rm sc}}{B_{\rm sc}} \times X,$$
$$ [(g_1,g_2), x] \mapsto (g_1 B , g_2 B, (g_1,g_2) \cdot x),$$
is an isomorphism. Moreover, under this isomorphism,
the line bundle $\cm_Z$ is just the pull back of 
$\cm$ under projection $p_X$ on the third coordinate.
Thus, by Lemma \ref{lem11} it follows that $\sigma$ 
is the pull back from $X$ of the effective Cartier divisor
$$D= (p-1)\big(\sum_{i \in \D} (1,w_0) {D_i} +
\sum_{j \notin K} X_j).$$
Applying the functor $(p_X)_*$ to (\ref{decomp2}) 
we obtain the Frobenius $D$-splitting 
$$ (p_X)_* s :  (F_X)_* \co_X \xrightarrow{(F_X)_* \sigma_D} (F_X)_* \co(D) 
\xrightarrow{(p_X)_* s_1 } \co_{X'}$$ 
of $X$ where $(p_X)_* s_1$ is compatible with 
the subvarieties $p_X(G_{\rm sc}^2 \times_{B_{\rm sc}^2} Y) = Y$
and $p_X(\diag(G_{\rm sc}) \times_{\diag(B_{\rm sc})} V) = \diag(G) 
\cdot V$ (by Lemma \ref{lem push forward}). This ends the proof.
\end{proof}

\begin{cor}
\label{ample-split}
Let $\mathcal X$ denote a $G$-Schubert variety in a smooth 
projective toroidal embedding of a reductive group $G$. 
Then $\mathcal X$ admits a stable Frobenius splitting along
an ample divisor. 
\end{cor}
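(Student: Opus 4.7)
The plan is to deduce the corollary from Proposition \ref{ample2}, Lemma \ref{ample}, and the additivity of stable Frobenius splittings expressed by Lemma \ref{sum}.

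First I would write $\mathcal X = \diag(G) \cdot V$ for some $B \times B$-orbit closure $V$ in $X$, and set $Y = \overline{(G \times G) \cdot V}$, the smallest $G \times G$-orbit closure containing $V$, which in particular contains $\mathcal X$. With $K = \{ j : Y \subset X_j \}$, Proposition \ref{ample2} furnishes a Frobenius $D$-splitting of $X$ compatible with both $Y$ and the $G$-Schubert variety $\mathcal X$, where
$$ D = (p-1)\Big( \sum_{i \in \D} (1, w_0) D_i + \sum_{j \notin K} X_j \Big). $$
The compatibility with $\mathcal X$, together with Lemma \ref{lemdsplit}, guarantees that no irreducible component of $\mathcal X$ lies in $\supp(D)$, so that $D \cap \mathcal X$ is a well-defined effective Cartier divisor on $\mathcal X$ and the splitting restricts to a Frobenius $(D \cap \mathcal X)$-splitting of $\mathcal X$. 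In particular $\mathcal X$ inherits a stable Frobenius splitting along $D \cap \mathcal X$.

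Next I would invoke Lemma \ref{ample} to obtain an ample effective Cartier divisor $A$ on $Y$ whose support is contained in $Y \cap \supp(D)$. Its restriction $A_{|\mathcal X}$ is then ample on $\mathcal X$, and $\supp(A_{|\mathcal X}) \subset \supp(D \cap \mathcal X)$. Since both divisors are effective and every prime component of $A_{|\mathcal X}$ appears with positive multiplicity in $D \cap \mathcal X$, there is a positive integer $m$ with $m(D \cap \mathcal X) = A_{|\mathcal X} + E$ for some effective Cartier divisor $E$ on $\mathcal X$.

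Finally, iterating Lemma \ref{sum} with $D_1 = D_2 = D \cap \mathcal X$ upgrades the stable Frobenius splitting of $\mathcal X$ along $D \cap \mathcal X$ to one along $m(D \cap \mathcal X)$; applying Lemma \ref{sum} once more to the decomposition $m(D \cap \mathcal X) = A_{|\mathcal X} + E$ separates this into stable Frobenius splittings along $A_{|\mathcal X}$ and $E$, yielding in particular a stable Frobenius splitting of $\mathcal X$ along the ample divisor $A_{|\mathcal X}$. The only delicate step is the first one: extracting a splitting of $\mathcal X$ from one on $X$ requires genuine compatibility of the ambient splitting with $\mathcal X$ itself (not merely with the larger orbit closure $Y$), and this is precisely what the already-proven Proposition \ref{ample2} provides.
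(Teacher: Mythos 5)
Your proof is correct and follows the paper's route exactly: the proof in the paper is simply the one-line instruction ``Apply Proposition \ref{ample2}, Lemma \ref{ample} and Lemma \ref{sum}'', and you have filled in precisely the intended details. One small caveat on the step producing $m(D\cap\mathcal X)=A_{|\mathcal X}+E$: you justify it via multiplicities of prime components, which is really a normal-variety argument, and $\mathcal X$ need not be normal (Section 11). The cleanest repair is either to note that the support containment $\supp(A_{|\mathcal X})\subseteq\supp(D\cap\mathcal X)$ alone suffices on any noetherian scheme (locally $f_A$ lies in $\sqrt{(f_D)}$, so $f_D^m/f_A$ is a regular non-zero-divisor for $m\gg 0$), or to carry out the divisor comparison on the orbit closure $Y$, which is normal by \cite[Cor.8.4]{HT2}, and only then restrict the resulting effective Cartier divisor to $\mathcal X$; the latter is closer to how the paper uses the same trio of lemmas in the proof of Corollary \ref{vanishing}.
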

\begin{proof}
Apply Proposition \ref{ample2}, Lemma
\ref{ample} and Lemma \ref{sum}.
\end{proof}
\section{Cohomology of line bundles}

The main aim of this section is to obtain a 
 generalizing the vanishing part 
of Corollary \ref{cohvan1} to nef line 
bundles. The concept
of a rational morphism is here central and for this 
we use \cite[Sect.3.3]{BK} 
as a general reference. First we recall :

\begin{defn}
A morphism $f : Y \rightarrow Z$ of varieties 
is a called a {\it rational morphism} if the 
induced map $f^\sharp : \co_Z \rightarrow f_* \co_Y $
is an isomorphism and ${\rm R}^i f_* \co_Y =
0$, $i>0$.
\end{defn}

The following criterion for a morphism to be
rational will be very useful (\cite[Lem.2.11]{Ram}).

\begin{lem}
\label{Kempf}
Let $f : Y \rightarrow Z$ denote a projective morphism
of irreducible varieties and let $\hat Y$ denote a closed
irreducible subvariety of $Y$. Consider the image
$\hat Z = f(\hat Y)$ as a closed subvariety of $Z$. Let
$\mathcal L$ denote an ample line bundle on $Z$
and assume
\begin{enumerate}
\item $f^\sharp : \co_Z \rightarrow f_* \co_Y $
is an isomorphism.
\item ${\rm H}^i(Y, f^* \mathcal L^n ) =
 {\rm H}^i(\hat Y, f^* \mathcal L^n )  = 0$,
for $i>0$ and $n \gg 0$.
\item The restriction map
${\rm H}^0(Y, f^* \mathcal L^n ) \rightarrow
{\rm H}^0(\hat Y, f^* \mathcal L^n )$ is surjective
for $n \gg 0$.
\end{enumerate}
Then the induced map $\hat f : \hat Y \rightarrow \hat Z$
is a rational morphism.
\end{lem}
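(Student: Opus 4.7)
The plan is to establish the two defining properties of a rational morphism for $\hat f$, namely that $(\hat f)^\sharp : \co_{\hat Z} \to \hat f_* \co_{\hat Y}$ is an isomorphism and that ${\rm R}^q \hat f_* \co_{\hat Y} = 0$ for $q > 0$. Throughout, write $\cm = \cl_{|\hat Z}$; as $\hat Z$ is a closed subvariety of $Z$ and $\cl$ is ample, $\cm$ is ample on $\hat Z$.

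First I would dispose of the higher direct images via the Leray spectral sequence
\[ E_2^{p,q} = {\rm H}^p(\hat Z, {\rm R}^q \hat f_* \co_{\hat Y} \otimes \cm^n) \Rightarrow {\rm H}^{p+q}(\hat Y, f^* \cl^n). \]
For $n \gg 0$, Serre vanishing on $\hat Z$ kills the $p>0$ rows, so the sequence degenerates to an isomorphism ${\rm H}^q(\hat Y, f^* \cl^n) \simeq {\rm H}^0(\hat Z, {\rm R}^q \hat f_* \co_{\hat Y} \otimes \cm^n)$. By assumption (2) the left-hand side vanishes for $q > 0$ and $n$ large, and since ${\rm R}^q \hat f_* \co_{\hat Y} \otimes \cm^n$ is globally generated for such $n$ (ampleness of $\cm$), I conclude ${\rm R}^q \hat f_* \co_{\hat Y} = 0$ for all $q > 0$.

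Second, for the structure map: because $\hat Y$ is reduced and $\hat Z$ is given its reduced subvariety structure and coincides with the closure of $\hat f(\hat Y)$, the scheme-theoretic image of $\hat f$ is $\hat Z$, so $(\hat f)^\sharp$ is injective. Let $\ck$ denote its cokernel and tensor
\[ 0 \to \co_{\hat Z} \to \hat f_* \co_{\hat Y} \to \ck \to 0 \]
with $\cm^n$. Serre vanishing (${\rm H}^1(\hat Z, \cm^n) = 0$ for $n \gg 0$) gives the exact sequence
\[ {\rm H}^0(\hat Z, \cm^n) \to {\rm H}^0(\hat Z, \hat f_* \co_{\hat Y} \otimes \cm^n) \to {\rm H}^0(\hat Z, \ck \otimes \cm^n) \to 0, \]
so it remains to show that the first arrow is surjective for $n \gg 0$.

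For this last step I would invoke Leray for $f$ combined with the projection formula and assumption (1), which together give ${\rm H}^0(Y, f^* \cl^n) = {\rm H}^0(Z, \cl^n)$. The restriction map of assumption (3) therefore factors as
\[ {\rm H}^0(Z, \cl^n) \to {\rm H}^0(\hat Z, \cm^n) \to {\rm H}^0(\hat Y, f^* \cl^n), \]
whose composition is surjective by (3); hence the second arrow is. Under the $q=0$ case of the identification from the first step, that second arrow equals the map ${\rm H}^0(\hat Z, \cm^n) \to {\rm H}^0(\hat Z, \hat f_* \co_{\hat Y} \otimes \cm^n)$, which is exactly what I needed. Consequently ${\rm H}^0(\hat Z, \ck \otimes \cm^n) = 0$ for $n \gg 0$, and ampleness of $\cm$ forces $\ck = 0$. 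No step is genuinely delicate; the only care required is bookkeeping — orchestrating the two Leray arguments (for $\hat f$ and for $f$) against each other so that the identification of ${\rm H}^0(\hat Y, f^* \cl^n)$ coming from each one can be compared.
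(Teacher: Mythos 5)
The paper does not prove this lemma; it cites it as \cite[Lem.\ 2.11]{Ram} (Ramanathan). Your argument is correct and is essentially a reconstruction of Ramanathan's standard proof: the Leray spectral sequence for $\hat f$ together with Serre vanishing identifies $H^q(\hat Y, f^*\cl^n)$ with $H^0(\hat Z, R^q\hat f_*\co_{\hat Y}\otimes\cm^n)$ for $n\gg 0$, ampleness then forces $R^q\hat f_*\co_{\hat Y}=0$ for $q>0$ from hypothesis (2), and the surjectivity of $\hat f^\sharp$ follows by chasing hypotheses (1) and (3) through the projection formula and the commuting restriction square. Two trivial nits: since $\hat f$ is proper, $\hat f(\hat Y)$ is already closed, so there is no need to pass to a closure; and it is worth saying explicitly that only finitely many $R^q$ are nonzero so that a single $n$ works uniformly in the Serre-vanishing step.
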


\subsection{Toric variety}

An equivariant embedding $Z$ of the (reductive) group
$T$ is called a toric variety (wrt. $T$). Notice
that, as $T$ is commutative, we may consider the 
$T \times T$-action on $Z$ as just a $T$-action.
The following result should be well known but, as
we do not know a good reference, we include a 
proof. 

\begin{lem}
\label{toric}
Let $f : Y \rightarrow Z$ denote a 
projective surjective morphism of 
equivariant embeddings of $T$. Let $T \cdot z$
denote a $T$-orbit in $Z$ and let $T \cdot y$ 
denote a $T$-orbit in $f^{-1}(T \cdot z)$ of minimal
dimension. Then the map $T \cdot y \rightarrow 
T \cdot z$, induced by $f$, is an isomorphism.
\end{lem}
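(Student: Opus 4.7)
The plan is to translate the claim into one about stabilizer subgroups in $T$. By $T$-equivariance of $f$ combined with the transitivity of $T$ on $T \cdot z$, I may replace $y$ by a suitable $T$-translate and assume $f(y) = z$ (this does not alter the orbit $T \cdot y$). Setting $S_y = \text{Stab}_T(y)$ and $S_z = \text{Stab}_T(z)$, equivariance gives $S_y \subseteq S_z$, and the induced morphism $T \cdot y \to T \cdot z$ is identified with the natural projection $T/S_y \to T/S_z$. Since $Y$ and $Z$ are normal toric $T$-varieties, the orbit--cone correspondence implies that both $S_y$ and $S_z$ are subtori, in particular connected. Hence the induced map is an isomorphism if and only if $S_y = S_z$, and because both are connected, this in turn is equivalent to $\dim S_y = \dim S_z$.

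The surjection $T \cdot y \twoheadrightarrow T \cdot z$ already yields $\dim S_y \leq \dim S_z$, so it suffices to rule out the strict inequality. Assume, for contradiction, that $\dim S_y < \dim S_z$, and consider the fiber $F = f^{-1}(z)$. Since $f$ is projective and surjective, $F$ is a nonempty complete variety, and it is $S_z$-stable by equivariance. The subtorus $S_z$ is connected and solvable, so the Borel fixed point theorem produces a point $y_0 \in F$ with $S_z \subseteq \text{Stab}_T(y_0)$. Conversely, $T$-equivariance forces $\text{Stab}_T(y_0) \subseteq \text{Stab}_T(f(y_0)) = S_z$, so equality holds and $\dim T \cdot y_0 = \dim T - \dim S_z = \dim T \cdot z < \dim T \cdot y$. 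Since $T \cdot y_0 \subseteq f^{-1}(T \cdot z)$, this contradicts the minimality assumption on $\dim T \cdot y$.

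The principal step is the application of the Borel fixed point theorem to the complete fiber $F$; the rest is bookkeeping with stabilizers. I anticipate no real obstacle beyond verifying that on a normal toric variety orbit stabilizers are genuine subtori (hence connected), which follows from the orbit--cone correspondence and ensures that the dimension count is enough to conclude $S_y = S_z$, and thus that the $T$-equivariant map $T/S_y \to T/S_z$ is an isomorphism.
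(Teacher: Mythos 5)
Your proof is correct, and it takes a genuinely different route from the paper's. Both arguments ultimately rely on the same toric structure fact -- that stabilizers of points in a normal toric variety are connected subtori (equivalently, as the paper phrases it, that the cokernel of the map on character lattices $X^*(T\cdot z)\hookrightarrow X^*(T)$ is torsion-free) -- but the mechanisms diverge. The paper passes to the closures $\overline{T\cdot y}\to\overline{T\cdot z}$, uses minimality of $T\cdot y$ to show that the preimage of $T\cdot z$ under this map is exactly $T\cdot y$, concludes that $T\cdot y\to T\cdot z$ is both affine and projective hence finite, and then finishes with lattice arithmetic: the cokernel of $X^*(T\cdot z)\hookrightarrow X^*(T\cdot y)$ is simultaneously finite (from finiteness of the isogeny) and free (as a subgroup of the free group $X^*(T)/X^*(T\cdot z)$), hence trivial. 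You instead translate everything into stabilizers, apply the Borel fixed point theorem to the complete fiber $f^{-1}(z)$ to produce a point $y_0$ with $\mathrm{Stab}_T(y_0)=S_z$, and use minimality only at the very end to reach a contradiction if $\dim S_y<\dim S_z$. Your route avoids taking orbit closures entirely and replaces the finiteness-plus-lattice argument with the single cleaner tool of the fixed point theorem; the cost is that you invoke the Borel fixed point theorem where the paper only uses elementary properties of finite morphisms. Both are valid, and your variant is a reasonable and slightly more conceptual alternative.
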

\begin{proof}
Let $\overline{T \cdot z}$ and $\overline{T \cdot
y}$ denote the closures of $T \cdot z$ and $
T \cdot y$ in $Z$ and $Y$ respectively. Then 
the induced map 
$$ \hat f : \overline{T \cdot y} \rightarrow 
\overline{T \cdot z},$$ is a projective 
morphism. Moreover, by the minimality assumption
on $T \cdot y$, the inverse image $\hat f^{-1}
(T \cdot z)$ equals $T \cdot y$. In particular,
the induced morphism :   $T \cdot y \rightarrow T \cdot z$
is projective. But any $T$-orbit in a toric 
variety (wrt. to $T$) is isomorphic to a torus 
$T_1$ satisfying that the cokernel of the induced 
map of character groups $X^*(T_1) \rightarrow X^*(T)$ 
is a free abelian group (\cite[Sect.3.1]{Ful}).
In particular, the varieties $T \cdot y$ and 
$T \cdot z$ are tori and the cokernel of 
the induced map of character groups 
$X^*(T \cdot z) \rightarrow X^*(T \cdot y)$ is a
free abelian group. But $T \cdot y 
\rightarrow T \cdot z$ is an affine projective
morphism and thus it must be a finite morphism. 
Thus  the cokernel of $X^*(T \cdot z) \rightarrow 
X^*(T \cdot y)$ is a finite group and, as it is already
a free group, it must be trivial. This ends the 
proof as tori are determined by their character
groups.
\end{proof}

\begin{lem}
\label{rational}
Let $X$ denote a projective embedding of a
reductive group $G$ and let $Y$ denote a
$G \times G$-orbit closure of $X$. Then
there exists a smooth toroidal embedding 
$\hat X$ of $G$, a projective $G$-equivariant
morphism $f : \hat X \rightarrow X$ and
a $G \times G$-orbit closure $\hat Y$ 
in $\hat X$ such that the induced morphism
$f : \hat Y \rightarrow Y$ is a 
rational morphism.

\end{lem}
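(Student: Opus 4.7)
The strategy is to let $\hat X$ be a smooth projective toroidal desingularization of $X$, to choose $\hat Y$ via a toric minimality argument so that $\hat f:=f|_{\hat Y}:\hat Y\to Y$ is birational, and then to verify the hypotheses of Kempf's criterion (Lemma \ref{Kempf}) using the Frobenius splitting machinery of Section \ref{F-split section} together with the projection formula.

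\textbf{Construction of $\hat X$ and $\hat Y$.} By the general theory of spherical embeddings (see e.g.\ \cite[Sect.~6.2]{BK}) there exists a smooth projective toroidal embedding $\hat X$ of $G$ together with a projective $G\times G$-equivariant birational morphism $f:\hat X\to X$. Being $T\times T$-equivariant and proper, $f$ restricts to a projective surjective morphism between the toric varieties $\bar T_{\hat X}$ and $\bar T_X$, which denote the closures of $T$ in $\hat X$ and $X$ respectively. Every $G\times G$-orbit in a toroidal embedding meets the corresponding $\bar T\cap X_0$ in a single $T$-orbit, so let $T\cdot t$ denote the $T$-orbit associated in this way with $Y$. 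By Lemma \ref{toric} we may choose a $T$-orbit $T\cdot\hat t$ in $\bar T_{\hat X}\cap f^{-1}(T\cdot t)$ of minimal dimension, in which case $f$ induces an isomorphism $T\cdot\hat t\xrightarrow{\sim} T\cdot t$. Let $\hat Y\subset\hat X$ be the closure of the $G\times G$-orbit through $\hat t$; then $\hat Y$ is a $G\times G$-orbit closure, and using the local product decomposition (\ref{localiso}) on $X_0$ and on the corresponding open subset of $\hat X$ one verifies that $\hat f:\hat Y\to Y$ is birational.

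\textbf{Verification of Kempf's criterion.} We apply Lemma \ref{Kempf} with the projective morphism $f:\hat X\to X$, the subvariety $\hat Y\subset\hat X$, and any ample line bundle $\cl$ on $X$. Normality of $X$ together with Zariski's main theorem applied to the proper birational $f$ gives $f^\sharp:\co_X\xrightarrow{\sim} f_*\co_{\hat X}$, establishing hypothesis (1); the same reasoning applied to $\hat f$ between the normal varieties $\hat Y$ and $Y$ gives $\hat f_*\co_{\hat Y}\cong\co_Y$. The pullbacks $f^*\cl^n$ and $\hat f^*\cl^n$ are nef. On the smooth projective toroidal variety $\hat X$, Proposition \ref{ample2} applied with $V$ the open $B\times B$-orbit of $\hat X$ (so that $K=\varnothing$), combined with Lemma \ref{ample} and Lemma \ref{sum}, produces a stable Frobenius splitting of $\hat X$ along an ample divisor; Proposition \ref{prop stable} then yields the vanishing of $H^i(\hat X, f^*\cl^n)$ for $i>0$ and $n\ge 1$. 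For $\hat Y$, Proposition \ref{ample2} applied with $V$ the open $B\times B$-orbit of $\hat Y$ produces a Frobenius $D$-splitting of $\hat X$ compatible with $\hat Y$; restricting to $\hat Y$ yields a stable Frobenius splitting of $\hat Y$ along $D|_{\hat Y}$, whose support contains an ample effective Cartier divisor on $\hat Y$ by Lemma \ref{ample}, so another appeal to Lemma \ref{sum} and Proposition \ref{prop stable} gives the vanishing of $H^i(\hat Y,\hat f^*\cl^n)$ for $i>0$ and $n\ge 1$, completing (2). Finally, the projection formula combined with $f_*\co_{\hat X}=\co_X$ and $\hat f_*\co_{\hat Y}=\co_Y$ identifies the restriction map in (3) with $H^0(X,\cl^n)\to H^0(Y,\cl^n)$; since $Y$ is a $G$-Schubert variety (being a $G\times G$-orbit closure in a spherical variety, $Y$ is the closure $V$ of its unique dense $B\times B$-orbit and $Y=\diag(G)\cdot V$) and $\cl^n$ is ample, this surjectivity is Corollary \ref{cohvan1}. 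Lemma \ref{Kempf} now delivers the rationality of $\hat f$.

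\textbf{Main obstacle.} The central technical point is producing, on the $G\times G$-orbit closure $\hat Y$, a stable Frobenius splitting along an ample divisor: this is where the refined Frobenius $D$-splittings of Proposition \ref{ample2}, together with the ampleness assertion of Lemma \ref{ample} on $G\times G$-orbit closures, become essential, and this part parallels the proof of Corollary \ref{ample-split} but applied to a $G\times G$-orbit closure in place of a $G$-Schubert variety. The birationality of $\hat f$ is the other delicate step and is furnished cleanly by the toric minimality Lemma \ref{toric} together with the local structure (\ref{localiso}).
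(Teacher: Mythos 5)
Your argument for the case where $X$ is \emph{toroidal} is essentially sound and close to the paper's, with one worthwhile variation: where the paper simply cites \cite[Prop.~7.2]{HT2} for the vanishing of $H^i(\hat Y, f^*\cl)$, you re-derive it from Proposition~\ref{ample2}, Lemma~\ref{ample}, Lemma~\ref{sum} and Proposition~\ref{prop stable} by taking $V$ to be the dense $B\times B$-orbit closure of $\hat Y$ (so that $\diag(G)\cdot V = (G\times G)\cdot V = \hat Y$); that is a legitimate and more self-contained route, and your applying Kempf's criterion to $f:\hat X\to X$ with subvariety $\hat Y$ rather than to $\hat f:\hat Y\to Y$ with trivial subvariety is also fine.

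However, there is a genuine gap in your treatment of the general (non-toroidal) $X$. Your construction of $\hat Y$ rests on two facts that the paper establishes only for \emph{toroidal} embeddings: that $\overline{T}_X$ is a toric variety in the sense required by Lemma~\ref{toric} (in particular normal — recall that the paper's notion of ``equivariant embedding of $T$'' builds in normality, and the proof of Lemma~\ref{toric} uses \cite[Sect.~3.1]{Ful} which requires this), and that every $G\times G$-orbit of $X$ meets $\overline{T}_X\cap X_0$ in a single $T$-orbit. Both are quoted in Section~\ref{F-split section} under the standing hypothesis that $X$ is toroidal and are pulled from \cite[Prop.~6.2.3]{BK}, which is a statement about toroidal embeddings. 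For a general projective embedding $X$, the open set $X_0$ and the toric structure of $\overline{T}_X$ are not available, and likewise \cite[Prop.~6.2.5]{BK} — your source for the existence of a smooth toroidal $\hat X$ mapping to $X$ — is also stated for toroidal $X$. You therefore never actually produce a candidate $\hat Y$ with a birational map to $Y$ in the general case. The paper closes this gap with an extra reduction: it first forms the normalization $X_1$ of the closure of the image of $G \hookrightarrow X\times\mathbf{X}$, which is a (possibly singular) toroidal embedding dominating $X$; for any $G\times G$-orbit closure $Y_1\subset X_1$, the map $Y_1\to f(Y_1)$ is rational by \cite[Lem.~8.3]{HT2}; one then chooses $Y_1$ mapping onto $Y$, applies the toroidal case to $(X_1,Y_1)$ to desingularize further, and composes rational morphisms. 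Your proof should be amended along these lines before it is complete.
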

\begin{proof}
Assume first that $X$ is toroidal.
By \cite[Prop.6.2.5]{BK} there 
exists a smooth toroidal embedding 
$\hat X$ of $G$ with a projective 
morphism $f : \hat X \rightarrow X$. 
Let $X_0$ denote the open subset of
$X$ introduced in the beginning of
Section \ref{F-split section}, and
let $\hat X_0$ denote the 
corresponding subset of $\hat X$.
Then the inverse image $f^{-1}(X_0)$
coincides with $\hat X_0$ 
\cite[Prop.6.2.3(i)]{BK}.
Let $\overline{T}$ (resp. $\hat T$)
denote the closure of $T$ in $X$
(resp. $\hat X$). Then $\overline{T}$
and $\hat T$ are toric varieties 
\cite[Prop.6.2.3]{BK}, and the induced
map $f : \hat T \rightarrow \overline{T}$
is a projective morphism of toric varieties.
Thus also the induced map
$$ \hat X_0 \cap \hat T \rightarrow 
X_0 \cap \overline{T},$$
is a projective morphism of toric varieties.
As mentioned in Section \ref{F-split section}
every $G \times G$-orbit in $X$ will intersect
$X_0 \cap \overline{T}$ in a unique $T$-orbit.
We let $T \cdot x$ denote the open $T$-orbit in 
the intersection 
of $Y$ with $X_0 \cap \overline{T}$. 
By Lemma \ref{toric} we may 
find a $T$-orbit $T \cdot \hat x$ in 
$ \hat X_0 \cap \hat T $ which by $f$ is 
isomorphic to $T \cdot x$, and we then
define $\hat Y$ to be the closure of 
the $G \times G$-orbit through $\hat x$.
By the isomorphism 
(\ref{localiso}) we then conclude that $f$
induces a projective birational morphism
$\hat Y \rightarrow Y$.
By \cite[Cor.8.4]{HT2} the orbit closure
$Y$ is normal and thus, by Zariski's main
theorem, we conclude $f_* \co_{\hat Y}
= \co_Y$. By Lemma \ref{Kempf} (used on
the morphism $\hat Y \rightarrow Y$ and the 
closed non-proper subvariety $\hat Y$ of $\hat Y$) it now
suffices to prove that
$${\rm H}^i(\hat Y, f^*\mathcal L) = 0 ,~i>0,$$
for a very ample line bundle $\cl$ on $Y$.
This follows from \cite[Prop.7.2]{HT2} and
ends the proof in the case when $X$ is
toroidal.

Consider now an arbitrary projective equivariant
embedding $X$ of $G$. Let $\hat X$ denote the
normalization of the closure of the image of
the natural $G \times G$-equivariant embedding
$$  G \rightarrow X \times {\bf X},$$
where ${\bf X}$ denotes the wonderful compactification
of $G_{\rm ad}$. 
Then $\hat X$ is a toroidal embedding of $G$
with an induced projective equivariant
morphism $f : \hat X \rightarrow X$.
Let $\hat Y$ denote any $G \times G$-orbit
closure in $\hat X$. Then $f : \hat Y
\rightarrow f(\hat Y)$ is a rational morphism
\cite[Lem.8.3]{HT2}. In particular, we
may find a $G \times G$-orbit closure
$\hat Y$ of $\hat X$ with
an induced rational morphism
$f : \hat Y \rightarrow Y$. Finally we
may apply the first part of the proof to
$\hat Y$ and $\hat X$  and use that a composition of
rational morphisms is again a  rational
morphism.
\end{proof}

\begin{cor}
\label{vanishing} Let $X$ denote a projective embedding of a
reductive group $G$ and let $\mathcal X$ denote a $G$-Schubert 
variety in $X$. Let $Y= (G \times G) \cdot \mathcal X$
denote the minimal $G \times G$-orbit
closure of $X$ containing $\mathcal X$.
 When $\cl$ is a nef line
bundle on $\mathcal X$ then
$$ {\rm H}^i(\mathcal X, \cl) = 0,~  i>0.$$
Moreover, when $\cl$ is a nef
line bundle on $Y$ then the restriction morphism
$$ {\rm H}^0(Y, \cl) \rightarrow
{\rm H}^0(\mathcal X , \cl),$$
is surjective.
\end{cor}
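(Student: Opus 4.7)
The plan is to reduce to the smooth projective toroidal case, where the Frobenius splitting machinery of Section \ref{F-split section} applies, and then conclude via Proposition \ref{prop stable}. To set up the reduction, I invoke Lemma \ref{rational} to obtain a smooth projective toroidal embedding $\hat X$ of $G$, a projective $G \times G$-equivariant morphism $f : \hat X \to X$, and a $G \times G$-orbit closure $\hat Y \subset \hat X$ such that $f|_{\hat Y} : \hat Y \to Y$ is a rational morphism. Next, I lift $\mathcal X$ to $\hat X$: write $\mathcal X = \diag(G) \cdot V$ with $V$ a $B \times B$-orbit closure in $X$, and use that $f|_{\hat Y}$ is a surjective $G \times G$-equivariant morphism sending $B \times B$-orbits to $B \times B$-orbits to pick a $B \times B$-orbit $\hat{\mathcal O} \subset \hat Y$ with $f(\hat{\mathcal O})$ equal to the open $B \times B$-orbit of $V$ (any $B \times B$-orbit in the nonempty set $f^{-1}(\text{open orbit of } V) \cap \hat Y$ works). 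Let $\hat V \subset \hat Y$ be the closure of $\hat{\mathcal O}$ and set $\hat{\mathcal X} := \diag(G) \cdot \hat V$; then $\hat{\mathcal X}$ is a $G$-Schubert variety in $\hat X$, contained in $\hat Y$, with $f(\hat{\mathcal X}) = \mathcal X$.

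The main step is to prove that $f|_{\hat{\mathcal X}} : \hat{\mathcal X} \to \mathcal X$ is a rational morphism, via Lemma \ref{Kempf} applied to the pair $(\hat Y, \hat{\mathcal X})$ over $(Y, \mathcal X)$ with an ample line bundle on $Y$. Condition (1) of Lemma \ref{Kempf} is immediate from the rationality of $f|_{\hat Y}$. For conditions (2) and (3), Proposition \ref{ample2} (applied with $V = \hat V$) furnishes a Frobenius $D$-splitting of $\hat X$ compatible with both $\hat Y$ and $\hat{\mathcal X}$; by the compatibility with $\hat Y$ this restricts to a Frobenius $(D \cap \hat Y)$-splitting of $\hat Y$ compatible with $\hat{\mathcal X}$, and Lemma \ref{ample} applied with $Y = \hat Y$ shows that $D \cap \hat Y$ contains the support of an ample divisor on $\hat Y$. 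Lemma \ref{sum} then extracts a stable Frobenius splitting of $\hat Y$ along an ample divisor, still compatible with $\hat{\mathcal X}$. Combined with the stable Frobenius splitting of $\hat{\mathcal X}$ along an ample divisor supplied by Corollary \ref{ample-split}, Proposition \ref{prop stable} yields precisely the cohomology vanishing and restriction surjectivity required by (2) and (3), so $f|_{\hat{\mathcal X}}$ is rational.

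With both $f|_{\hat Y}$ and $f|_{\hat{\mathcal X}}$ rational, the projection formula and the Leray spectral sequence give canonical identifications $H^i(\mathcal X, \cl) \simeq H^i(\hat{\mathcal X}, f^* \cl)$ and $H^0(Y, \cl) \simeq H^0(\hat Y, f^* \cl)$. Part (1) then follows by applying Proposition \ref{prop stable} to the stable Frobenius splitting of $\hat{\mathcal X}$ along an ample divisor, since the pullback $f^* \cl$ of a nef line bundle is nef. Part (2) follows by applying the surjectivity statement of Proposition \ref{prop stable} to the stable Frobenius splitting of $\hat Y$ along an ample divisor compatible with $\hat{\mathcal X}$, which gives surjectivity of $H^0(\hat Y, f^* \cl) \to H^0(\hat{\mathcal X}, f^* \cl)$ for nef $\cl$ on $Y$; via the identifications above this becomes surjectivity of $H^0(Y, \cl) \to H^0(\mathcal X, \cl)$. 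The main technical obstacle is the rationality step: once $\hat{\mathcal X}$ is constructed and the compatible Frobenius splittings are assembled, the conclusion reduces to straightforward applications of Lemma \ref{Kempf} and Proposition \ref{prop stable}.
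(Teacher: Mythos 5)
Your proposal is correct and follows essentially the same line of argument as the paper: reduce to a smooth projective toroidal embedding via Lemma \ref{rational}, lift the $G$-Schubert variety to $\hat{\mathcal X}\subset\hat Y$, verify via Lemma \ref{Kempf} together with the stable Frobenius splitting of $\hat Y$ along an ample divisor compatible with $\hat{\mathcal X}$ (from Proposition \ref{ample2}, Lemma \ref{ample}, Lemma \ref{sum}) that $\hat{\mathcal X}\to\mathcal X$ is rational, and then transfer cohomology through the two rational morphisms. The only point you leave implicit --- and which is worth making explicit, though the paper itself is also brief here --- is that your $\hat V$ meets the open $G\times G$-orbit of $\hat Y$ (equivalently $(G\times G)\cdot\hat V=\hat Y$), which is needed so that the $Y$ appearing in Proposition \ref{ample2} applied to $\hat V$ is indeed $\hat Y$; this holds because $V$ meets the open orbit $Y_0$ of $Y$ by minimality of $Y$, and equivariance plus birationality of $\hat Y\to Y$ forces $f^{-1}(Y_0)\cap\hat Y$ to equal the open orbit of $\hat Y$.
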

\begin{proof}
Assume first that $X$ is smooth and toroidal.
Then by Proposition \ref{ample2}, Lemma
\ref{ample} and Lemma \ref{sum} the variety $Y$ admits a stable
Frobenius splitting along an ample divisor
which is compatibly with $\mathcal X$.
Thus the statement follows in this case
by Proposition \ref{prop stable}.

Let now $X$ denote an arbitrary projective equivariant
embedding of $G$. Choose, using Lemma \ref{rational},
a smooth projective toroidal embedding $\hat X$ with 
a projective equivariant morphism $f : \hat X \rightarrow
X$ onto $X$, and a  $G \times G$-orbit closure $\hat Y$ in 
$\hat X$ with an induced rational morphism onto $Y$. Let $V$ denote a
$B \times B$-orbit closure  in $Y$ such that
$\mathcal X = \diag(G) \cdot {V}$. As $Y$ is
the minimal $G \times G$-orbit closure containing
$\mathcal X$ it follows that $V$ will intersect
the open $G \times G$-orbit of $Y$. In
particular, there exists a $B \times B$-orbit
closure $\hat V$ in $\hat X$ which intersects 
the open $G \times G$-orbit of $\hat Y$ and 
which maps onto $V$. In particular,
$$\hat{\mathcal X} := \diag(G) \cdot
{\hat V},$$ 
is a $G$-Schubert variety in $\hat X$ which by $f$
maps onto $\mathcal X$. Moreover, $\hat Y$ is the 
minimal $G \times G$-orbit closure containing 
$\hat{\mathcal X} $. 

We claim that the induced morphism $\hat{\mathcal X}
\rightarrow \mathcal X$ is a rational morphism. To 
prove this we apply Lemma \ref{Kempf} to the rational
morphism $f : \hat Y \rightarrow Y$. Choose an ample
line bundle $\cm$ on $Y$. Then it suffices to prove 
that  
\begin{equation}
\label{cond1}
{\rm H}^i(\hat Y, f^* \cm^n ) =
 {\rm H}^i(\hat{\mathcal X}, f^* \cm^n )  = 0, 
~i>0, ~n>0,
\end{equation} 
and that the restriction map 
\begin{equation}
\label{cond2}
{\rm H}^0(\hat Y, f^* \cm^n ) \rightarrow
{\rm H}^0(\hat X, f^* \cm^n ),
\end{equation} 
is surjective for $n > 0$. But $\cm^n$ is an
ample, and thus nef, line bundle on $Y$ and 
therefore the pull back $f^* \cm^n$ is a
nef line bundle on $\hat Y$ 
(\cite[Ex. 1.4.4]{Laz}). As $\hat X$ is smooth
and toroidal, the conclusion of the first 
part of this proof
then shows that  conditions (\ref{cond1}) and
(\ref{cond2}) are satisfied.

Now both $\hat{\mathcal X} \rightarrow 
\mathcal X$ and $\hat Y \rightarrow Y$ are 
rational morphisms. In particular, we have
identifications 
$$ {\rm H}^i(\hat Y, f^* \cl ) \simeq
 {\rm H}^i(Y, \cl ), 
~i \geq 0,$$
$$  {\rm H}^i(\hat{\mathcal X}, f^* \cl ) \simeq
 {\rm H}^i(\mathcal X, \cl ), 
~i \geq 0,$$
for any line bundle $\cl$ on $Y$ or, in the 
second equation, on $X$. When $\cl$ is
a nef line bundle the pull back $f^* \cl$ is also
nef (\cite[Ex. 1.4.4]{Laz}). Thus as we have already 
completed the proof of the statement for smooth 
toroidal embeddings, in particular for $\hat X$,
this now ends the proof. 
\end{proof}

By the proof of the above result we also find that any $G$-Schubert 
variety $\mathcal X$ in a projective equivariant embedding of $G$,
will admit a $G$-equivariant rational morphism $f : \hat{\mathcal X}
\rightarrow {\mathcal X}$ by a $G$-Schubert variety $\hat{\mathcal X}$
of some smooth projective toroidal embedding of
$G$.

\begin{remark}
When $X = {\bf X}$ is the wonderful compactification
of a group $G$ of adjoint type and $\cl$ is a nef
line bundle on ${\bf X}$, then the restriction
morphism
$$ {\rm H}^0({\bf X}, \cl) \rightarrow {\rm H}^0(Y, \cl), $$
to any closed $G \times G$-stable irreducible subvariety
$Y$ of $\bf X$ is surjective. In particular, also
the restriction morphism
$$ {\rm H}^0({\bf X}, \cl) \rightarrow {\rm H}^0(\mathcal X, \cl), $$
to any $G$-Schubert variety $\mathcal X$ is surjective by the above
result. We do not know if the latter is true for arbitrary
equivariant embeddings.
\end{remark}

\section{Normality questions}
The obtained Frobenius splitting properties of
$G$-Schubert varieties in Section 
\ref{sect 9} and the cohomology 
vanishing results in Corollary \ref{vanishing} 
should be expected to have strong implications
on the geometry of these varieties. However, in this 
section we provide an example of a $G$-Schubert
variety in the wonderful compactification of
a group of type $G_2$ which is not even normal. 
In fact, it seems that there are plenty of such 
examples.

\subsection{Some general theory}
We keep the notations as  in Section \ref{won}.
For $J \subset \D$ and  $w \in W^{\D \setminus J}$,
we let $\overline{{\bf X}_{J, w}} $ denote the closure of
${\bf X}_{J, w}$ in $\bf X$.  Let
$$K=\max\{K' \subset \D \setminus J; w K' \subset K' \}.$$
By \cite[Prop. 1.12]{He2}, we have a $\diag(G)$-equivariant isomorphism
$$
 {\diag(G)} \times_{\diag(P_K)} (P_K \dot w,P_K) {\bf h}_J
\simeq {\bf X}_{J, w}
$$
induced by the inclusion of $ (P_K \dot w,P_K) {\bf h}_J$
in ${\bf X}$.
Let $V$ denote the closure of $(P_K \dot w,P_K) {\bf h}_J$
within ${\bf X}$. Then $V$ is the closure of a
$B \times B$-orbit and we find that the induced
map
\begin{equation}
\label{birat} 
f : {\diag(G)} \times_{\diag(P_K)} V
\rightarrow  \overline{{\bf X}_{J, w}},
\end{equation}
is a birational and projective morphism. Thus, 
by Zariski's Main Theorem, a necessary condition
for $\overline{{\bf X}_{J, w}}$  to be normal is 
that the fibers of $f$ are connected. Actually, 
in positive characteristic, connectedness of 
the fibers is also sufficient for 
$\overline{{\bf X}_{J, w}}$ to be normal. This 
follows as $\overline{{\bf X}_{J, w}}$ is 
Frobenius split (Prop. \ref{P-stable}) and thus weakly normal 
\cite[Prop.1.2.5]{BK}.

\subsection{An example of a non-normal closure}

Let now, furthermore, $G$ be a group of type $G_2$. Let
$\alpha_1$ denote the short simple root and $\alpha_2$
denote the long simple root. The associated simple
reflections are denoted by $s_1$ and $s_2$. Let
$J = \{ \alpha_2 \}$ 
and  $w = s_1 s_2 \in W^{\D \setminus J}$. In this case $K =
\emptyset $ and we obtain a birational map
$$ f : {\diag(G)} \times_{\diag(B)} V
\simeq \overline{{\bf X}_{J, w}}$$
where $V$ is the closure of $(B\dot w,B) {\bf h}_J$. By \cite[Prop. 2.4]{Sp}, the part of $V$ which intersect
the open $G \times G$-orbit of ${\bf X}_J$ equals
\begin{equation}
\label{a} 
\bigcup_{w \leq w'} (B \dot w',B) {\bf h}_J
\cup \bigcup_{w s_1 \leq w'} (B \dot w',B \dot s_1) {\bf h}_J.
\end{equation}
In particular, $x:=(\dot v, 1) {\bf h}_J$ is an element of $V$, where $v=s_2 s_1 s_2$. We claim that the fiber of $f$ over $x$
is not connected. To see this let $y$ denote
a point in the fiber over $x$. Then we may
find $g \in G$ and $\tilde{x}
\in V$ such that
$$ y = [g, \tilde{x}].$$
By (\ref{a}), $\tilde{x} = (b \dot w', b') {\bf h}_J$ for some $b \in B$, $b' \in P_{\D \setminus J}$
and $w' \geq w$.
Then
$$( g b \dot w', g b') {\bf h}_J
=(\dot v,1) {\bf h}_J.$$
It follows that $(\dot{v}^{-1}
g b \dot w', g b')$ lies in
the stabilizer of ${\bf h}_J$. In particular,
$ g b' \in P_{\D \setminus J}$ and thus also $g \in P_{\D \setminus J}$.
If $g \in B$ then $y = [1, x]$. So assume
that $g = u_1(t) \dot s_1$ where $u_1$ is
the root homomorphism associated to
$\alpha_1$. Assume that $t \neq 0$. Then
we may find $b_1 \in B$ and $s \in k$
such that
$g = u_{-1}(s) b_1$
where $u_{-1}$ is the root homomorphism
associated to $-\alpha_1$. Thus
\begin{equation}
\notag
\begin{split}
\tilde{x} & = (g^{-1}, g^{-1})
(\dot v,1) {\bf h}_J \\
& = (b_1^{-1} u_{-1}(-s) \dot v, g \i) {\bf h}_J \\
& = (b_1^{-1} \dot v, g \i) {\bf h}_J \\
& \in (B \dot v, B \dot s_1) {\bf h}_J
\end{split}
\end{equation}
where the third equality follows as
$\dot v^{-1} u_{-1}(-s) \dot v$ is contained
in the unipotent radical of $P_{\D \setminus J}^-$. But
$(B \dot v, B \dot s_1) {\bf h}_J $ has empty intersection
with $V$ (by (\ref{a})) which contradicts the assumption
that $t \neq 0$. It follows that the only
possibilities for $y$ are $[1,x]$ and
$[\dot s_1 , (\dot s_1^{-1} \dot v, \dot s_1^{-1}) {\bf h}_J]$.
As $ (\dot s_1^{-1} \dot v, \dot s_1^{-1})$ is contained 
in $V$  (by (\ref{a}))
we conclude that the fiber of $f$ over
$x$ consists of 2 points; in particular
the fiber is not connected and thus $\overline{{\bf X}_{J, w}}$
is not normal.

\begin{remark}
It seems likely that normalizations of $G$-Schubert 
varieties should have nice singularities : If we
let $\mathcal Z_{J, w}$ denote the normalization
of the closure of ${\bf X}_{J, w}$, then the map 
(\ref{birat}) induces a birational and projective
morphism 
$${\tilde f} :   {\diag(G)} \times_{\diag(P_K)} V
\rightarrow \mathcal Z_{J, w}.$$
We expect that $\tilde f$ can be used to obtain
global $F$-regularity of $\mathcal Z_{J, w}$
(see  \cite{S} for an introduction
to global $F$-regularity). In fact, by the 
results in \cite{HT2} the $B \times B$-orbit 
closure $V$ is globally $F$-regular.  Thus
${\diag(G)} \times_{\diag(P_K)} V$
is locally strongly $F$-regular, and as 
$$\tilde{f}_* \co_{ {\diag(G)} \times_{\diag(P_K)} V}
= \co_{\mathcal Z_{J, w}},$$ 
it seems likely that $\mathcal  Z_{J, w}$
is also locally  strongly $F$-regular.
Moreover, similarly to Corollary
\ref{ample-split}
one may conclude that
$\mathcal Z_{J, w}$ admits a stable Frobenius
splitting along an ample divisor.
Thus $\mathcal Z_{J, w}$ is globally $F$-regular if it is
locally strongly $F$-regular.
At the moment we do
not know if $\mathcal  Z_{J, w}$ is
locally strongly $F$-regular. 
\end{remark}

\section{Generalizations}
\label{Rstable}
Fix notation as in Section \ref{notation}. 
An admissible triple of $G \times
G$ is by definition a triple $\cc=(J_1, J_2,
\th_{\d})$ consisting of $J_1, J_2 \subset \D$, a
bijection $\d: J_1 \rightarrow J_2$
and an isomorphism $\th_{\d}: L_{J_1}
\rightarrow L_{J_2}$ that maps $T$ to $T$ and
the root subgroup $U_{\a_i}$ to the root subgroup $U_{\a_{\d(i)}}$
for $i \in J_1$.
To each admissible triple $\cc=(J_1, J_2, \th_{\d})$,
we associate the subgroup $\car_{\cc}$ of $G \times G$ defined
by
$$
\car_{\cc}=\{ (p, q) : p \in P_{J_1}, q \in P_{J_2}, \th_{\d}(\pi_{J_1}(p))=
\pi_{J_2}(q) \},
$$
where $\pi_J : P_J \rightarrow L_J$, for a subset $J \subset \D$,
denotes the natural quotient map. 

Let $X$ denote an equivariant embedding of the reductive
group $G$. A $\car_{\cc}$-Schubert variety of $X$ is then 
a subset of the form $\car_{\cc} \cdot V$ for some 
$B \times B$-orbit closure $V$ in $X$. When $G=G_{\rm ad}$ 
is a group of adjoint type and $X = {\bf X}$ is the 
associated wonderful compactification the set of 
$\car_{\cc}$-Schubert varieties coincides with closures 
of the set of $\car_{\cc}$-stable pieces. 
By definition  \cite[section 7]{LY}, 
a $\car_{\cc}$-stable piece in the wonderful compactification 
${\bf X}$ of $G_{\rm ad}$ is a subvariety of the form $\car_{\cc} \cdot Y$, 
where $Y=(B v_1, B v_2) \cdot {\bf h}_J$ for some $J \subset \D$, $v_1 
\in W^J$ and $v_2 \in {}^{J_2} W$ (notation as in Section \ref{won}).  
Notice that when $J_1=J_2=\D$ and $\th_\d$ is the identity map
 then a $\car_{\cc}$-stable piece is the same as a $G$-stable 
piece. On the other hand, when $J_1=J_2=\emptyset$, then a
 $\car_{\cc}$-stable piece is the same as a $B \times B$-orbit.
 Moreover, any $\car_{\cc}$-Schubert variety is a finite 
union of   $\car_{\cc}$-stable pieces  \cite[Section 7]{LY}.

The following is a generalization of Proposition
\ref{P-stable} and Proposition \ref{ample2}.

\begin{prop}
\label{stable}
Let $\cc=(J_1, J_2,\th_{\d})$ denote an admissible 
triple of $G \times G$ and let $X$ denote an 
equivariant embedding of $G$. Then $X$ admits
a Frobenius splitting which compatible splits
all $\car_{\cc}$-Schubert varieties in $X$.  
If, moreover, $X$ is a smooth, projective and 
toroidal embedding and $Y=X_K = (G \times G) \cdot V$, 
for some $B \times B$-orbit closure $V$ in $X$,
then $X$ 
admits a Frobenius splitting along the 
Cartier divisor
$$D=(p-1) \big(\sum_{i \in \D}(w_0^{J_1},1)\tilde{D}_i+\sum_{j \notin K} X_j \big),$$
which is compatibly with $Y$ and $\car_{\cc} \cdot V$.
\end{prop}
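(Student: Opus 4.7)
The plan is to mirror the proofs of Propositions \ref{P-stable} and \ref{ample2}, replacing the diagonal pair $(\diag(G),\diag(B))$ by $(\car_\cc, H)$, where $H:=\car_\cc\cap(B\times B)$. I set $Z_\cc := \car_\cc\times_H X$ and consider the $\car_\cc$-equivariant projective action map $\mu : Z_\cc \to X$, $[r,x]\mapsto r\cdot x$, under which $\car_\cc\times_H V$ is sent onto the $\car_\cc$-Schubert variety $\car_\cc\cdot V$. The formula $[r,x]\mapsto (rH, r\cdot x)$ defines an isomorphism $Z_\cc\simeq (\car_\cc/H)\times X$ identifying $\mu$ with the second projection. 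A Borel fixed-point argument applied to the $\car_\cc$-orbit of $(eB,eB)$ inside $P_{J_1}/B\times P_{J_2}/B$, using that the Borel subgroup $(U_{J_1}\times U_{J_2})\rtimes\{(b,\th_\d(b)):b\in B\cap L_{J_1}\}$ of $\car_\cc$ sits inside $H$, shows that $\car_\cc/H$ is a closed, connected, projective subvariety of $G/B\times G/B$; consequently $\mu_*\co_{Z_\cc}=\co_X$ and push-forward of Frobenius splittings (Section \ref{push-forward}) is available.

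I realize $Z_\cc$ as the closed subvariety of $W := (G\times G)\times_{B\times B} X$ obtained as the preimage of $\car_\cc/H \subset G/B \times G/B$ under the canonical projection $W\to G/B\times G/B$. By Theorem \ref{can-thm}, $X$ carries a $B_{\rm sc}\times B_{\rm sc}$-canonical Frobenius splitting compatible with every $B\times B$-orbit closure. Feeding this splitting, together with a Frobenius splitting of $G/B\times G/B$ compatible with $\car_\cc/H$, into Theorem \ref{thm11} (applied to the group $G\times G$, parabolic $B\times B$, and variety $X$) produces a Frobenius splitting of $W$ simultaneously compatible with every $(G\times G)\times_{B\times B} V$ (by part~(1)) and with $Z_\cc$ (by part~(2)). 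Lemma \ref{compatible} then gives compatibility with the intersections $\car_\cc\times_H V$, and Lemma \ref{lem push forward} applied to $\mu$ transports everything down to a Frobenius splitting of $X$ compatible with every $\car_\cc$-Schubert variety.

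The main obstacle is thus the construction of a Frobenius splitting of $G/B\times G/B$ compatible with $\car_\cc/H$. Under the identification $l(B\cap L_{J_1})\mapsto (lB,\th_\d(l)B)$, the subvariety $\car_\cc/H$ is the graph of $\th_\d$ inside $P_{J_1}/B\times P_{J_2}/B$. Applying Lemma \ref{diag} to the simply connected cover of the commutator subgroup $G_{J_1}$ produces a $\diag(G_{J_1})$-invariant element in ${\rm St}_{J_1}\boxtimes {\rm St}_{J_1}$ whose associated section is compatible with the diagonal of $(G_{J_1}/B_{J_1})^2$. Transporting the second factor through the isomorphism $\th_\d: G_{J_1}\xrightarrow{\sim} G_{J_2}$ (which preserves Borel and torus by definition of an admissible triple), then applying the morphism ${\rm St}_{J_i}\to {\rm St}$ of Lemma \ref{can restriction} on each factor to promote to ${\rm St}\boxtimes {\rm St}$, yields the desired Frobenius splitting of $G/B\times G/B$ compatible with $\car_\cc/H$.

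For the stable Frobenius splitting along the ample divisor $D$ in the smooth projective toroidal case, I adapt the argument of Proposition \ref{ample2} with $\diag(G_{\rm sc})$ replaced by $\car_\cc$. One assembles, from suitable translates of the canonical sections $\tau_i$ and $\sigma_j$, an $H$-invariant element of the appropriate twist of $\End_F^\cm(X,Y,V)$; combining this with the twisted-diagonal splitting of the preceding paragraph in the framework of Section \ref{frobsplit} and pushing down through $\mu$ yields the Frobenius $D$-splitting compatible with $Y$ and $\car_\cc\cdot V$. The twist $(w_0^{J_1},1)\tilde D_i$ in the divisor arises because, in order for $\tau_i$ to become an $H$-eigenvector of the correct weight, one must apply the longest element $w_0^{J_1}$ of $W_{J_1}$ on the first factor; this is exactly the correction arising from the identification in Lemma \ref{can restriction} between the $B^-$-invariant line of ${\rm St}$ and the $B_{J_1}^-$-invariant line of ${\rm St}_{J_1}$.
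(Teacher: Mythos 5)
Your approach is genuinely different from the paper's. You work throughout with the ambient group $G\times G$ and the variety $W=(G\times G)\times_{B\times B}X$, realizing $Z_\cc=\car_\cc\times_H X$ as the preimage of the twisted diagonal $\car_\cc/H$ inside $G/B\times G/B$. The framing is sensible, and the auxiliary steps (the Borel fixed-point argument showing $\car_\cc/H$ is closed, push-forward via the action map, intersections via Lemma~\ref{compatible}) are fine. However, the whole construction hinges on producing an element of $\End_F^{\cl\boxtimes\cl}\big((\nicefrac{G}{B})^2\big)$ compatible with the subvariety $\car_\cc/H$, and there your argument has a genuine gap.

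You propose to obtain this element by taking the $\diag(G_{J_1})$-invariant element of ${\rm St}_{J_1}\boxtimes{\rm St}_{J_1}$ from Lemma~\ref{diag}, transporting through $\th_\d$, and then applying the map ${\rm St}_{J}\to{\rm St}$ of Lemma~\ref{can restriction} on each tensor factor. But Lemma~\ref{can restriction} only supplies a $G_J$-equivariant morphism of abstract modules; it is defined via Frobenius reciprocity and selfduality, not by any geometric construction, and it carries no information about what happens to the compatibility locus of the associated section of $\mathcal{E}nd^!_F$. In particular, the subvariety $\car_\cc/H$ lives in the \emph{strictly larger} product $(\nicefrac{G}{B})^2$, not in $(\nicefrac{G_{J_1}}{B_{J_1}})\times(\nicefrac{G_{J_2}}{B_{J_2}})$, and the assertion that the promoted section is compatible with $\car_\cc/H\subset(\nicefrac{G}{B})^2$ is exactly the point that would need a proof --- for instance by identifying the zero divisor of the promoted section, as is done for $v_\Delta$ in Section~\ref{well known}. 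No such argument is given, and it is not an easy consequence of anything in the paper. (When $\th_\d$ is the identity, $\car_\cc/H=\diag(\nicefrac{P_{J_1}}{B})$ is an intersection of $\diag(\nicefrac{G}{B})$ with a Schubert variety, so compatibility could be salvaged from Lemma~\ref{diag} via Lemma~\ref{compatible}; but for nontrivial $\th_\d$ the graph is not such an intersection.)

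The paper circumvents this entirely by a change of group rather than a change of subvariety. It defines $X_\cc$ to be $X$ viewed as a $G_{J_1}^2$-variety via $(g_1,g_2)\mapsto(g_1,\th_\d(g_2))$; then $\car_\cc\cdot V\subset X$ is literally $\diag(G_{J_1})\cdot V\subset X_\cc$, and the flag variety that appears is $(\nicefrac{G_{J_1}}{B_{J_1}})^2$, where Lemma~\ref{diag} applied to $G_{J_1}$ already gives the needed compatibility with the honest diagonal, with no promotion to ${\rm St}$ required. Lemma~\ref{can restriction} is used only on the fiber side, to restrict the $B_{\rm sc}\times B_{\rm sc}$-canonical splitting of $X$ to a $B_{J_1}\times B_{J_2}$-canonical one --- a purely representation-theoretic operation for which its module-theoretic statement is exactly what is needed. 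The appearance of $w_0^{J_1}$ in the divisor $D$ comes from the choice of highest and lowest weight vectors in ${\rm St}_{J_1}$, again intrinsically in the $G_{J_1}$-world. So unless you can supply a geometric computation of the Lemma~\ref{can restriction} map at the level of sections of $\mathcal{E}nd^!_F$ (which would be a new result), your route does not go through, while the twist $X_\cc$ makes the whole question disappear.
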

\begin{proof}
As the proof is similar to the proof of 
Proposition \ref{P-stable} and Proposition 
\ref{ample2} we only sketch the 
proof. In the following $G_J$, for a subset 
$J \subset \D$,  denotes the commutator  of the Levi subgroup 
in $G_{\rm sc}$ associated to $J$. The Borel subgroup 
$G_J \cap B_{\rm sc}$ of $G_J$ is denoted by $B_J$.
Define $X_\cc$
to be the  $G_{J_1}^2$-variety
which as a variety is $X$ but where
the action is twisted by the morphism
$$ G_{J_1} \times G_{J_1} \xrightarrow
{{\bf 1} \times \th_{\d}}
G_{J_1} \times G_{J_2}.$$
Then the  $B_{J_1} \times B_{J_2}$-canonical
Frobenius splitting of $X$ defined by
Theorem \ref{can-thm} and Lemma
\ref{can restriction} induces a
 $B_{J_1}^2$-canonical
Frobenius splitting of $X_\cc$.
In particular, all subvarieties of
$X_\cc$ which corresponds to $B
\times B$-orbit closures in $X$
will be compatibly Frobenius split
by this canonical Frobenius splitting.
Now apply an argument as in the
proof of Proposition
\ref{P-stable} and use the identification
of $\car_{\cc} \cdot V  \subset X$
with ${\rm diag}(G_{J_1})
\cdot V \subset X_\cc$. This
ends the proof of the first
statement.

Assume now that $X$ is a smooth,
projective and toroidal embedding
and consider the $B_{\rm sc}^2$-equivariant 
morphism 
$$ \eta : \mathcal M \rightarrow \mathcal End^!_F(X,Y,V) \otimes
k_{(1-p)\rho \boxtimes (1-p) \rho},$$
defined in (\ref{eta}). Let $Y_\cc$ and $V_\cc$ 
be defined similar to $X_\cc$. Then $\eta$ induces
a $B_{J_1}^2$-equivariant
morphism
$$ \eta_\cc : \mathcal M \rightarrow \mathcal End^!_F(X_\cc,Y_\cc ,V_\cc) \otimes
k_{(1-p)\rho_{J_1} \boxtimes (1-p) \rho_{J_1}}.$$
Similar to the definition of $v$ in (\ref{v}) we
obtain from $\eta_\cc$ an element
$$ v_\cc \in {\rm Ind}_{B_{J_1}^2}^{G_{J_1}^2}
\big(
\End_F(X_\cc,Y_\cc ,V_\cc) \otimes
k_{(1-p)\rho_{J_1} \boxtimes (1-p) \rho_{J_1}}
\big),$$
and from this a $G_{J_1}^2$-equivariant morphism 
\begin{equation}
\label{vcc}
\End_F^{\cl_{J_1} \boxtimes \cl_{J_1}} \big(
(\nicefrac{G_{J_1}}{B_{J_1}})^2 \big) \otimes 
\cm(X_\cc)  \rightarrow \End_F\big( G_{J_1}^2 
\times_{B_{J_1}^2}
X_\cc \big),
\end{equation}
similar to (\ref{invarsect2}). Here 
$\cl_{J_1}$ is the line bundle on $\nicefrac{G_{J_1}}{B_{J_1}}$
associated to the character $ (1-p) \rho_{J_1}$. Combining
Lemma \ref{can restriction} and Lemma \ref{lem11}
we also obtain a map 
\begin{equation}
\label{lem11cc}
{\rm St_{J_1}} \boxtimes {\rm St_{J_1}}
\rightarrow \cm(X_\cc),
\end{equation}
with properties similar to the ones described in 
 Lemma \ref{lem11}. As in (\ref{section}) we may 
also use $v_\cc$ to construct a morphism
$$ \cm(X_\cc) \rightarrow 
{\rm St_{J_1}} \boxtimes {\rm St_{J_1}},$$
such that the composition with (\ref{lem11cc})
is an isomorphism on 
${\rm St_{J_1}} \boxtimes {\rm St_{J_1}}$. Finally we may construct 
\begin{equation}
\notag
\Theta_{\cc} : \End_F^{\cl_{J_1} \boxtimes \cl_{J_1}} \big(
(\nicefrac{G_{J_1}}{B_{J_1}})^2 \big) \otimes 
({\rm St_{J_1}} \boxtimes {\rm St_{J_1}})  
\rightarrow \End_F\big(G_{J_1}^2
\times_{B_{J_1}^2}
X_\cc \big),
\end{equation}
similar to (\ref{invarsect3}). In particular, 
a statement equivalent to 
Proposition \ref{propintro} is satisfied 
for $\Theta_\cc$. Let $v_{+}^{J_1}$ (resp. 
$v_-^{J_1}$) denote a highest (resp. lowest)
weight vector in ${\rm St}_{J_1}$ and let 
$v_\Delta^{J_1}$ denote the $\diag(G_{J_1})$-invariant 
element  of  $\End_F^{\cl_{J_1} \boxtimes \cl_{J_1}} \big(
(\nicefrac{G_{J_1}}{B_{J_1}})^2 \big) $. Imitating 
the proof of Proposition \ref{ample2} we then 
find that $\Theta_\cc(v_\Delta^{J_1} \otimes 
(v_{+}^{J_1} \otimes v_-^{J_1}))$ is a Frobenius
splitting of $G_{J_1}^2 \times_{B_{J_1}^2}
X_\cc $ (up to a nonzero constant). Moreover, 
the push forward of this Frobenius splitting
to $X$ has the desired properties. We only 
have to note that the  effective Cartier 
associated to the image of $v_{+}^{J_1} \otimes
v_-^{J_1}$ under the map (\ref{lem11cc}) equals
$$D=(p-1) \big(\sum_{i \in \D}(w_0^{J_1},1)\tilde{D}_i+\sum_{j \notin K} X_j \big).$$
This ends the proof.
\end{proof}

\

We may also argue as in Corollary \ref{vanishing}
to obtain

\begin{cor}
\label{vanishing2}
Let $X$ denote a projective embedding of a
reductive group $G$ and let $V$ denote the
closure of a $B \times B$-orbit in $X$. Let
$Y= ( G \times G) \cdot V$ and 
$\mathcal X_\cc = \car_\cc \cdot V$.
When $\cl$ is a nef line bundle on
$\mathcal X_\cc$ then
$$ {\rm H}^i(\mathcal X_\cc, \cl) = 0,~  i>0.$$
Moreover, when $\cl$ is a nef
line bundle on $Y$ then the restriction morphism
$$ {\rm H}^0(Y, \cl) \rightarrow
{\rm H}^0(\mathcal X_\cc , \cl),$$
is surjective.
\end{cor}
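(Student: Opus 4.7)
The plan is to mimic the proof of Corollary \ref{vanishing} step for step, replacing Proposition \ref{ample2} by its $\car_\cc$-analog Proposition \ref{stable}.

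First I would handle the case where $X$ is smooth, projective and toroidal. Proposition \ref{stable} provides a Frobenius splitting of $X$ along the divisor
$$D=(p-1)\bigl(\sum_{i \in \D}(w_0^{J_1},1)\tilde{D}_i+\sum_{j \notin K} X_j\bigr),$$
which is compatible with $Y$ and $\mathcal X_\cc=\car_\cc \cdot V$; in particular this induces a Frobenius $D$-splitting of $Y$ compatible with $\mathcal X_\cc$, and hence a stable Frobenius splitting of $Y$ along $D$ compatible with $\mathcal X_\cc$. The support of $D$ is precisely the subset appearing in Lemma \ref{ample}, which contains the support of an ample effective Cartier divisor on $Y$. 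Combining with Lemma \ref{sum}, $Y$ admits a stable Frobenius splitting along an ample divisor compatible with $\mathcal X_\cc$. Applying Proposition \ref{prop stable} then yields both the vanishing of ${\rm H}^i(\mathcal X_\cc,\cl)$ for $i>0$ and the surjectivity of the restriction from $Y$, provided $\cl$ is nef.

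For general projective $X$, I would use Lemma \ref{rational} to choose a smooth projective toroidal embedding $\hat X$ of $G$, a projective $G\times G$-equivariant morphism $f:\hat X\to X$, and a $G\times G$-orbit closure $\hat Y$ of $\hat X$ such that the induced $f:\hat Y\to Y$ is a rational morphism. Since $V$ meets the open $G\times G$-orbit of $Y$ (as $Y$ is the minimal such orbit closure containing $V$), I can lift $V$ to a $B\times B$-orbit closure $\hat V$ in $\hat X$ which meets the open orbit of $\hat Y$ and satisfies $f(\hat V)=V$. Setting $\hat{\mathcal X}_\cc = \car_\cc\cdot\hat V$, the morphism $f$ restricts to a projective surjection $\hat{\mathcal X}_\cc \to \mathcal X_\cc$.

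The main technical step is then to verify that this induced map is itself a rational morphism; this is what lets us transport cohomology statements between the two varieties. I would establish this using the criterion of Lemma \ref{Kempf} applied to the rational morphism $\hat Y \to Y$: fixing an ample line bundle $\cm$ on $Y$, the pullback $f^*\cm^n$ is nef on $\hat Y$, and by the smooth toroidal case established above (applied to $\hat Y$ and $\hat{\mathcal X}_\cc$ in $\hat X$) one has the required vanishing of higher cohomology on $\hat Y$ and $\hat{\mathcal X}_\cc$ together with the surjectivity of restriction from $\hat Y$ to $\hat{\mathcal X}_\cc$. Once rationality of $\hat{\mathcal X}_\cc \to \mathcal X_\cc$ is established, we obtain identifications
$${\rm H}^i(\hat Y,f^*\cl)\simeq {\rm H}^i(Y,\cl),\qquad {\rm H}^i(\hat{\mathcal X}_\cc,f^*\cl)\simeq {\rm H}^i(\mathcal X_\cc,\cl),$$
for all $i\geq 0$ and every line bundle $\cl$; since pullbacks of nef line bundles are nef (\cite[Ex.~1.4.4]{Laz}), the already-proved smooth toroidal case applied on $\hat X$ then yields both conclusions for $\mathcal X_\cc$. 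The main obstacle is precisely the rationality check in step above, but it follows mechanically from Lemma \ref{Kempf} once the smooth toroidal case is in hand.
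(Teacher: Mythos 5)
Your proposal is correct and matches the paper's intended argument --- the paper's own proof here is literally the remark ``we may also argue as in Corollary~\ref{vanishing},'' and your proof fills in exactly those steps by substituting Proposition~\ref{stable} for Proposition~\ref{ample2}. One small imprecision worth flagging: the divisor $D$ of Proposition~\ref{stable} involves $(w_0^{J_1},1)\tilde{D}_i = (w_0^{J_1}w_0,w_0)D_i$, which is the $(w_0^{J_1}w_0,1)$-translate of the divisor $(1,w_0)D_i$ appearing in Lemma~\ref{ample}, so the support of $D$ is a translate of the set in Lemma~\ref{ample} rather than literally that set as you assert. This does not affect the argument: translating the affine open $Y_0\cap X_0$ and the $B\times B^-$-eigenvector by $(w_0^{J_1}w_0,1)$ gives the analogous conclusion that the support of $D$ still contains the support of an ample effective Cartier divisor on $Y$, and the remainder of the proof (including the reduction to the smooth toroidal case via Lemma~\ref{rational} and Lemma~\ref{Kempf}) goes through exactly as you describe.
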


\begin{remark}
In the case where $k=\mathbb C$ and $X$ is the wonderful
compactification, the subvarieties $(w_0^{J_1}, 1) \tilde D_i$,
$X_j$ and all the $\car_{\cc}$-Schubert varieties are Poisson
subvarieties with respect to the Poisson structure on $X$
corresponding to the splitting
$$\Lie(G) \oplus \Lie(G)=l_1 \oplus l_2,$$ where
$l_1=\Lie(\car_{\cc})$ and $l_2$ is a certain subalgebra of
$\Ad(w_0^{J_1}) \Lie(B^-) \oplus \Lie(B^-)$. See \cite[4.5]{LY2}.

\end{remark}

\bibliographystyle{amsalpha}

\end{document}